\numberwithin{equation}{section}
\newcommand{\lbl}[1]{\label{#1}}
\newtheorem{theo}{Theorem}[section]
\newtheorem{prop}{Proposition}[section]
\newtheorem{lem}{Lemma}[section]
\newtheorem{col}{Corollary}[section]
\newtheorem{defi}{Definition}[section]
\newcommand{\be}{\begin{equation}}
\newcommand{\ee}{\end{equation}}
\newcommand\bes{\begin{eqnarray}} \newcommand\ees{\end{eqnarray}}
\newcommand{\bess}{\begin{eqnarray*}}
\newcommand{\eess}{\end{eqnarray*}}
\newcommand\ep{\varepsilon}
\newcommand\kk{\left}
\newcommand\rr{\right}
\newcommand\dd{\displaystyle}
\newcommand\lm{\lambda}
\newcommand\yy{\infty}
\newcommand\R{\mathbb{R}}
\newcommand\ud{\underline}
\newcommand\qq{\eqref}\newcommand\dx{{\rm d}x}
\begin{document}

\begin{center}{\Large\bf Qualitative properties of solutions to nonlocal infectious SIR epidemic models}\footnote{This work was
supported by NSFC Grant 12171120.}\\[4mm]
 {\Large Hanxiang Bao,\; Mingxin Wang\footnote{Corresponding author. {\sl E-mail}: mxwang@hpu.edu.cn },\; Shaowen Yao}\\[0.5mm]
 {School of Mathematics and Information Science, Henan Polytechnic University, Jiaozuo  454003, China}
\end{center}

\begin{quote}
\noindent{\bf Abstract.} This paper studies qualitative properties of solutions of  nonlocal infectious SIR epidemic models \qq{1.3}-\qq{1.5}, with the homogeneous Neumann boundary conditions, Dirichlet boundary conditions and free boundary, respectively. We first use the upper and lower solutions method and the Lyapunov function method to prove the global asymptotically stabilities of the disease-free equilibrium and the unique positive equilibrium of \qq{1.3}. Then we use the theory of topological degree in cones to study the positive equilibrium solutions of \qq{1.4}, including the necessary and sufficient conditions for the existence, and the uniqueness in some special case. At last, for the free boundary problem \qq{1.5}, we study the longtime behaviors of solutions and criteria for spreading and vanishing. The highlights are to overcome failures of the  Lyapunov functional method and comparison principle, and difficulties in the maximum principle and Hopf boundary lemma of boundary value problems caused by nonlocal terms.

\noindent{\bf Keywords:} Nonlocal infectious SIR model; Initial-boundary value problems; Free boundary problems; Qualitative properties of solutions.

\noindent {\bf AMS subject classifications (2020)}: 92D30, 35K51, 35R35, 35J57, 35B35, 35B40.
 \end{quote}

\pagestyle{myheadings}
\section{Introduction}

As seen in its long history, epidemic disease has caused orders of
magnitude more deaths around the world than warfare, floods or famines, so many efforts have been made to prevent and control their spread. In order to better understand the infectious mechanism of infectious diseases, many researchers are committed to establishing mathematical models of infectious diseases and studying the dynamical properties of infectious diseases.
The most influential theoretical SIR model was given by Kermack and McKendrick in 1927 (\cite{KM72}), when they studied the Great Plague of London, which took place in 1665-1666. The classical SIR Model is described by dividing populations into susceptible, infected, and recovered individuals, denoted by $S $, $I $ and $R $, respectively. The progress of individuals can be represented by
 \[S\to I\to R.\]

Assume that epidemic disease  are contact infections, and that $S,I$ and $R$ have the same fertility, and $R$ is immune and no longer infected. Li, Ni and Wang \cite{LNW2023} proposed the following SIR epidemic model with uniform spatial distribution
\bes\left\{\begin{array}{ll}
 \dd\frac{{\rm d}S}{{\rm d}t}=aN-\beta S-bNS-kIS,\;\;&t>0,\\[2mm]
 \dd\frac{{\rm d}I}{{\rm d}t}=k IS-\gamma I-\beta I-bNI,\;\;&t>0,\\ [2mm]
  \dd\frac{{\rm d}R}{{\rm d}t}=\gamma I-\beta R-bNR,\;\;&t>0,\\ [2mm]
  S(0)=S_0>0,\;\; I(0)=I_0>0,\;\;R(0)=0,
\end{array}\right.\label{1.1}
\ees
and the corresponding free boundary problem with local diffusion and double free boundaries
 \bes\left\{\begin{array}{ll}
 S_t-dS_{xx}=aN-\beta S-bNS-kIS,&t>0,\; x\in\mathbb{R},\\[1mm]
 I_t-dI_{xx}=kIS-\gamma I-\beta I-bNI,&t>0,\;x\in(g(t),h(t)),\\ [1mm]
 R_t-dR_{xx}=\gamma I-\beta R-bNR,&t>0,\;x\in(g(t),h(t)),\\ [1mm]
 g'(t)=-\mu I_x(t,g(t)),\ \ h'(t)=-\mu I_x(t,h(t)), &t>0,\\[1mm]
 I=R=0, &t\ge0, \; x\notin (g(t),h(t)),\\[1mm]
 S(0,x)=S_0(x),\ x\in\mathbb{R};\;\;\;\;
 I(0,x)=I_0(x),\  R(0,x)=0,&x\in[-h_0, h_0],\\[1mm]
 -g(0)=h(0)=h_0.
\end{array}\right.\label{1.2}
\ees
In the above two models, $N=S+I+R$ is the total population, $b=(a-\beta)/M$, $a$ and $\beta$ are the birth rate and intrinsic death rate, respectively, $M$ is the environmental carrying capacity, the term $bN$ in each equation represents depletion of the native resources by all populations \cite{MSB1986}; the term $kIS$ indicates that the disease is transmitted through contact and $k$ is the infection rate, while $\gamma$ is the recovery rate of the infected population; $R(0)=0$ means that there is no recovered individuals at the initial time. In the second model, it is assumed that $I$ and $R$ spread along the same free boundary since $R$ depends only on $I$, while $S$ distributes in the whole line $\R$.

For the model \qq{1.1}, Li, Ni and Wang derived its basic reproduction number $\mathcal{R}_0=\frac{k(a-\beta)}{b(a+\gamma)}$, and show that the disease-free equilibrium point is globally asymptotically stable if $\mathcal{R}_0\le1$, while the unique positive equilibrium point is globally asymptotically stable if $\mathcal{R}_0>1$. For the free boundary problem \qq{1.2}, they studied the well-posedness and longtime behaviors of solutions, and  obtained a spreading-vanishing dichotomy where the basic reproduction number $\mathcal{R}_0$ plays a crucial role.

Recently, Zhang and Wang \cite{ZhangWang} studied a version of \eqref{1.2} with nonlocal diffusion.

In the case of virus transmission through the air, susceptible populations at the point $x$ and time $t$ are not only affected (infected) by the infected individuals at that point, but also by those around them. Infection mechanism is subject to a nonlocal law
 \[\int_{\Omega} kP(x,y)I(y,t){\rm d}y=:k\mathcal{P}[I](x,t),\]
where $k>0$ is a constant, the kernel function $kP(x,y)$ represents the probability that the viruses carried by an infected individual at point $y$ spread to point $x$, which is nonnegative. In such case, the local infectious term $kIS$ in model \eqref{1.1} should be replaced by the nonlocal infectious term $k\mathcal{P}[I]S$, and the corresponding models should be the initial boundary value problems having the homogeneous Neumann boundary conditions
\bes\left\{\begin{array}{ll}
S_t-d\Delta S=aN-\beta S-bNS-k\mathcal{P}[I]S,\;\;&x\in\Omega,\;t>0,\\[1mm]
I_t-d\Delta I=k\mathcal{P}[I]S-\gamma I-\beta I-bNI,\;\;&x\in\Omega,\;t>0,\\[1mm]
R_t-d\Delta R=\gamma I-\beta R-bNR,\;\;&x\in\Omega,\;t>0,\\[2mm]
\dd\frac{\partial S}{\partial \nu}=\frac{\partial I}{\partial \nu}=\frac{\partial R}{\partial \nu}=0,\;\;&x\in\partial\Omega,\;t>0,\\[2mm]
	 S(x,0)=S_0(x)>0,\;\; I(x,0)=I_0(x)>0,\;\;R(x,0)=0, \;\;&x\in\overline{\Omega},
 \end{array}\right.\label{1.3}
 \ees
or the homogeneous Dirichlet boundary conditions
\bes\left\{\begin{array}{ll}
S_t-d\Delta S=aN-\beta S-bNS-k\mathcal{P}[I]S,\;\;&x\in\Omega,\;t>0,\\[1mm]
	 I_t-d\Delta I=k\mathcal{P}[I]S-\gamma I-\beta I-bNI,\;\;&x\in\Omega,\;t>0,\\[1mm]
	R_t-d\Delta R=\gamma I-\beta R-bNR,\;\;&x\in\Omega,\;t>0,\\[2mm]
    S=I=R=0,\;\;&x\in\partial\Omega,\;t>0,\\[2mm]
	 S(x,0)=S_0(x)>0,\;\; I(x,0)=I_0(x)>0,\;\;R(x,0)=0, \;\;&x\in\Omega,
\end{array}\right.\label{1.4}
 \ees
and the free boundary problem \eqref{1.2} should be the following model
 \bes\left\{\begin{array}{ll}
 S_t-d S_{xx}=aN-\beta S-bNS-k\mathcal{P}[I]S,\;\;&t>0,\; x\in\mathbb{R},\\[1mm]
 I_t-d I_{xx}=k\mathcal{P}[I]S-\gamma I-\beta I-bNI,\;\;&t>0,\;x\in(g(t),h(t)),\\ [1mm]
   R_t-d R_{xx}=\gamma I-\beta R-bNR,\;\;&t>0,\;x\in(g(t),h(t)),\\ [1mm]
 g'(t)=-\mu I_x(t,g(t)),\, h'(t)=-\mu I_x(t,h(t)),&t>0,\\[1mm]
   I=R=0, &t\ge0, \; x\notin (g(t),h(t)),\\[1mm]
   S=S_0(x)>0,\; x\in\mathbb{R};\;\;
   I=I_0(x)>0,\;R=R_0(x)=0,&t=0,\; x\in[-h_0, h_0],\\[1mm]
   -g(0)=h_0=h(0).
\end{array}\right.\label{1.5}
\ees
In \eqref{1.5},
 \[\mathcal{P}[I](t,x)=\int_\mathbb{R} P(x,y)I(t,y){\rm d}y=\int_{g(t)}^{h(t)} P(x,y)I(t,y){\rm d}y.\]

For problems \eqref{1.3} and \eqref{1.4}, we assume that $\partial \Omega\in C^{2+\alpha}$, and the following hold: \vspace{-2mm}
\begin{enumerate}
\item[{\bf(P1)}]\, $P\in C(\overline{\Omega}\times\overline{\Omega})$,\ $P(x,y)\ge0$ and $\int_{\Omega}P(x,y)\dx=1$ for any $y\in\Omega$ (normative of probability).\vspace{-2mm}
\item[{\bf(N)}]\, $(S_0, I_0)\in [C^2(\overline\Omega)]^2$, and the compatibility condition holds, i.e., $\frac{\partial S_0}{\partial \nu}(x)=\frac{\partial I_0}{\partial \nu}(x)=0$ in $\partial \Omega$ for the problem \eqref{1.3}.\vspace{-2mm}
\item[{\bf(D)}]\, $(S_0, I_0)\in [C^2(\overline\Omega)]^2$, and the compatibility condition holds, i.e., $S_0(x)=I_0(x)=0$ in $\partial\Omega$ for the problem \eqref{1.4}.\vspace{-2mm}
\end{enumerate}
For problems \eqref{1.5}, we assume that \vspace{-2mm}
\begin{enumerate}
\item[{\bf(P2)}]\, $P$ is bounded and locally Lipschitz continuous in $\mathbb{R}^2$,\ $P(x,y)=P(y,x)\ge0$ and $\int_{\mathbb{R}}P(x,y)d x=1$ for any $y\in\mathbb{R}$.\vspace{-2mm}
\item[{\bf(F)}]\, $S_0\in C^2(\mathbb{R})\cap L^\infty(\mathbb{R})$, $S_0>0$ in $\mathbb{R}$; $I_0\in C^2([-h_0,h_0])$, $I_0>0$ in $(-h_0,h_0)$, and
$I_0(\pm h_0)=0$;\vspace{-2mm}
\end{enumerate}

The main purpose of this paper is committed to research the dynamics of models \eqref{1.3}, \eqref{1.4} and \eqref{1.5}. In Section 2, we first prove the global existence and uniqueness of positive solution of \eqref{1.3}, and then investigate nonnegative equilibria and their stabilities. Specifically, we determine the basic reproduction number $\mathcal{R}_{01}$ and show that, by use of the upper and lower solutions method and the Lyapunov functional method, the disease-free equilibrium is globally asymptotically stable if $\mathcal{R}_{01}\le1$, while  the unique positive equilibrium is globally asymptotically stable if $\mathcal{R}_{01}>1$ and $a>\gamma$. In Section 3, we use the theory of topological degree in cones to study the positive equilibrium solutions of \eqref{1.4}, including the necessary and sufficient conditions for the existence, and the uniqueness in some special case. In the process, some properties of the principal eigenvalue of a nonlocal eigenvalue problem will be given. In Section 4, we mainly concern the well-posedness and longtime behaviors and criteria for spreading and vanishing of the problem \eqref{1.5}. The highlights of this paper are to overcome the inefficiency of the Lyapunov functional method and the failure of the comparison principle caused by nonlocal terms, and the difficulties brought by nonlocal terms in the maximum principle and Hopf boundary lemma of boundary value problems.

There have been many related works to study the nonlocal infectious epidemic models, the interested readers can refer to \cite{CLY17, HWzamp19, ChenW, YYW2023} and the references therein.

\section{The dynamical properties of the problem \eqref{1.3}}
\setcounter{equation}{0} {\setlength\arraycolsep{2pt}

In this section we first research the existence, uniqueness and positivity of the solution of \eqref{1.3}. Then we study the nonnegative constant equilibrium solutions and their stabilities. Throughout this section we always assume that hypotheses {\bf(P1)} and {\bf(N)} hold.

\subsection{Existence, uniqueness and positivity of the solution of\eqref{1.3}}

\begin{theo}\lbl{t2.1} The problem \qq{1.3} has a unique global solution $(S(x,t), I(x,t), R(x,t))$ and
 \bes 0< S, I, R\le
 \max\kk\{\dd\max_{\overline{\Omega}}\{S_0(x)+I_0(x)\},\,(a-\beta)/b\rr\}:=M
\;\;\;\text{for} \;x\in\overline{\Omega},\;t>0.
\nonumber\ees
Moreover, for any given $0<\alpha<1$ and $\tau>0$, $S,\,I,\,R\in C^{1+\alpha,\,(1+\alpha)/2}(\overline\Omega\times[\tau, \infty))$, and there exists a constant $C(\tau)$ such that
\begin{eqnarray}
\|S,\,I,\,R\|_{C^{1+\alpha,\,(1+\alpha)/2}(\overline\Omega\times[\tau, \infty))}\leq C(\tau).
\label{2.1}\end{eqnarray}
\end{theo}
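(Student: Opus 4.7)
The plan is to obtain local existence via a contraction argument, then exploit the structural observation that the nonlocal terms cancel upon summation: $N = S + I + R$ satisfies a self-contained logistic-type equation, which yields a uniform $L^\infty$ bound and, together with maximum-principle positivity, global existence. The Hölder bound is then a standard consequence of parabolic regularity.

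\emph{Local existence.} I would recast \eqref{1.3} as a system of integral equations via the Neumann heat semigroup on $C(\overline\Omega)$. Hypothesis (P1) implies that $\mathcal{P}:C(\overline\Omega)\to C(\overline\Omega)$ is a bounded nonnegative linear operator with $\|\mathcal{P}[I]\|_\infty \leq \|P\|_\infty|\Omega|\,\|I\|_\infty$, so the right-hand sides of \eqref{1.3} are locally Lipschitz in $(S,I,R)$ on bounded subsets of $C(\overline\Omega\times[0,T])^3$. A Banach contraction argument on a short interval $[0,T_0]$ yields a unique classical solution on a maximal interval $[0,T_{\max})$.

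\emph{Positivity and a priori bound.} Since $P\geq 0$, one has $\mathcal{P}[I]\geq 0$ whenever $I\geq 0$. Set $T^*:=\sup\{t\in[0,T_{\max}) : S\geq 0 \text{ and } I\geq 0 \text{ on } \overline\Omega\times[0,t]\}$; by continuity and $S_0,I_0>0$ we have $T^*>0$. On $[0,T^*]$, the weak maximum principle applied to
\[R_t - d\Delta R + (\beta + bN)R = \gamma I \geq 0, \qquad R(\cdot,0)\equiv 0,\]
gives $R\geq 0$, so $N\geq 0$; applying the strong maximum principle to the $S$-equation (with $aN\geq 0$ and $S_0>0$) gives $S>0$, and then to the $I$-equation (with $k\mathcal{P}[I]S\geq 0$ and $I_0>0$) gives $I>0$. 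Strict positivity at $T^*$ allows extension by continuity, forcing $T^*=T_{\max}$. For the bound, adding the three equations of \eqref{1.3} — whereby the nonlocal terms $\pm k\mathcal{P}[I]S$ cancel exactly — yields
\[N_t - d\Delta N = (a-\beta)N - bN^2 = bN(M_0-N), \qquad M_0:=(a-\beta)/b,\]
with homogeneous Neumann boundary data. Comparison with the scalar logistic ODE $\dot n = bn(M_0-n)$ started at $n(0)=\max_{\overline\Omega}(S_0+I_0)$ gives $N(x,t)\leq M$, with $M$ as in the statement, and hence $S,I,R\leq N\leq M$ on $[0,T_{\max})$. This rules out blow-up and forces $T_{\max}=\infty$.

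\emph{Regularity and main obstacle.} For the Hölder estimate \eqref{2.1}, each equation in \eqref{1.3} is of the form $u_t - d\Delta u = f_u$ with $f_u\in L^\infty(\Omega\times(0,\infty))$ (using the boundedness of $S,I,R$ and of $\mathcal{P}[I]$) and Neumann boundary data; parabolic $L^p$ estimates on cylinders $\overline\Omega\times[\tau/2,T]$, Sobolev embedding, and time-translation invariance then yield $u\in C^{1+\alpha,(1+\alpha)/2}(\overline\Omega\times[\tau,\infty))$ with constant independent of $T$. The main technical obstacle is the nonlocal term $k\mathcal{P}[I]S$, which prevents direct application of standard quasimonotone/upper-lower solution schemes to the full system and blocks a naive componentwise maximum principle for $I$. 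The two structural facts that resolve this are (i) the nonnegativity of the kernel $P$, which suffices to preserve nonnegativity of $\mathcal{P}[I]$ and so leaves the scalar maximum principle available equation-by-equation in the order $R\to S\to I$, and (ii) the exact cancellation of the nonlocal terms upon summation, reducing the bound on $N$ to the classical logistic comparison.
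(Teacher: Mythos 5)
Your proposal is correct, and its two load-bearing ideas coincide with the paper's: the exact cancellation of the nonlocal terms so that $N=S+I+R$ solves the scalar logistic Neumann problem (giving the uniform bound $N\le M$ that rules out blow-up), and parabolic regularity for the estimate \eqref{2.1}. The one genuine divergence is the local-existence mechanism. The paper constructs the coupled upper and lower solutions $\bar S=\bar I=\bar R=A+tB$, $\underline S=\underline I=\underline R=0$ on a short cylinder and invokes the monotone-iteration existence theorem of \cite[Theorem 4.5]{Wangpara}, after verifying a Lipschitz condition that explicitly accommodates the nonlocal term through $\mathcal{P}[|I_1-I_2|]$; you instead run a Banach contraction on the mild (semigroup) formulation. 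Both are standard and both work, but the paper's choice has the advantage that nonnegativity of $(S,I,R)$ comes for free from the lower solution $\underline u=0$, whereas you must supply it separately --- which you do, correctly, via the $T^*$-bootstrap applying the maximum principle in the order $R\to S\to I$ (the paper, by contrast, simply asserts strict positivity from nonnegativity plus $S_0>0$, $I_0>0$). Two minor points: your contraction route leaves the upgrade from mild to classical solution implicit (routine, but worth a sentence), and your closing remark that the nonlocal term ``prevents direct application of upper--lower solution schemes'' slightly overstates the obstacle, since the paper's coupled upper--lower solution framework does handle it directly at the local-existence stage; the real failures of monotone methods occur later, in the stability analysis of Section 2.2.
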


\begin{proof} Set
 \bess\left\{\begin{array}{ll}
f_1(S,I,R)=a(S+I+R)-\beta S-b(S+I+R)S-k\mathcal{P}[I]S,\\[1mm]
f_2(S,I,R)=k\mathcal{P}[I]S-\gamma I-\beta I-b(S+I+R)I,\\[1mm]
f_3(S,I,R)=\gamma I-\beta R-b(S+I+R)R.
\end{array}\right.
 \eess
Let
\bess
A&=&\max_{\overline{\Omega}}\{S_0(x)+I_0(x)\}, \\[1mm]
B_i&=&\dd\max_{0\le{S,I,R}\le{A+1}}|f_i(S,I,R)|,\;\;i=1,2,3\eess
and
\bess
T_0=1/B.
\eess
For $(x,t)\in Q_{T_0}:=\Omega\times(0,{T}_0]$, we define
\bess\left\{\begin{array}{ll}
\bar{S}=\bar{I}=\bar{R}=A+t B,\\[1mm]\underline{S}=\underline{I}=\underline{R}=0.
\end{array}\right.\nonumber
\eess
It is easy to see that the following hold:
\bes\left\{\begin{array}{ll}
\bar S_t-d\Delta{\bar S}=B\ge f_1({\bar S},I,R), \;\;&\underline{I}\le I\le\bar I,\, \underline{R}\le R\le\bar R,\;(x,t)\in Q_{T_0},\\[2mm]
\underline{S}_t-d\Delta{\underline{S}}=0\le f_1(0,I,R),\;\;&\underline{I}\le I\le\bar I,\, \underline{R}\le R\le\bar R,\;(x,t)\in Q_{T_0},\\[2mm]
\bar{I}_t-d\Delta{\bar{I}}=B\ge f_2(S,{\bar{I}},R),\;\; &
\underline{S}\le S\le\bar S,\, \underline{R}\le R\le\bar R,\;(x,t)\in Q_{T_0},\\[2mm]
\underline{I}_t-d\Delta{\underline{I}}=0= f_2(S,0,R),\;\;&
\underline{S}\le S\le\bar S,\, \underline{R}\le R\le\bar R
,\;(x,t)\in Q_{T_0},\\[2mm]
\bar{R}_t-d\Delta{\bar{R}}=B\ge f_3(S,I,{\bar{R}}),\;\; &
\underline{S}\le S\le\bar S,\, \underline{I}\le I\le\bar I,\;(x,t)\in Q_{T_0},\\[2mm]
\underline{R}_t-d\Delta{\underline{R}}=0\le f_3(S,I,0),\;\;&
\underline{S}\le S\le\bar S,\, \underline{I}\le I\le\bar I,\;(x,t)\in Q_{T_0},\\[2mm]
    \dd\frac{\partial {{\bar{S}}}}{\partial \nu}=\frac{\partial {{\underline{S}}}}{\partial \nu}=\frac{\partial {{\bar{I}}}}{\partial \nu}=\frac{\partial {{\underline{I}}}}{\partial \nu}=\frac{\partial {{\bar{R}}}}{\partial \nu}=\frac{\partial {{\underline{R}}}}{\partial \nu}=0,\;&x\in\partial\Omega,\;t\in(0,{T}_0],\;
    \\[2mm]
   {\bar S}(x,0)\ge S(x,0)\ge{\underline S}(x,0),&x\in\Omega,\;
   \\[1mm]
    {\bar I}(x,0)\ge I(x,0)\ge{\underline I}(x,0),&x\in\Omega,\;
   \\[1mm]
    {\bar R}(x,0)\ge R(x,0)\ge{\underline R}(x,0),&x\in\Omega.
\end{array}\right.\nonumber
\ees
This shows that $({\bar S}, {\bar I}, {\bar R})$ and $({\underline S},{\underline I},{\underline R})$ is a pair of coupled upper and lower solutions of \qq{1.3} on $\Omega\times(0,{T}_0)$, see the \cite[Definition 4.3]{Wangpara}.

It is effortless to obtain that there exists a positive constant $M$ such that
 \bess
 |f_1(S_1 ,I_1, R_1)-f_1(S_2,I_2, R_2)|
 &\le& M\big(|S_1-S_2|+|I_1-I_2|+|R_1-R_2|+\mathcal{P}[|I_1-I_2|]\big),\;\;(x,t)\in Q_{T_0},\\
 |f_2(S_1 ,I_1, R_1)-f_2(S_2,I_2, R_2)|
 &\le& M\big(|S_1-S_2|+|I_1-I_2|+|R_1-R_2|+\mathcal{P}[|I_1-I_2|]\big),\;\;(x,t)\in Q_{T_0},\\
 |f_3(S_1 ,I_1, R_1)-f_3(S_2,I_2, R_2)|
 &\le& M\big(|S_1-S_2|+|I_1-I_2|+|R_1-R_2|\big),\;\;(x,t)\in Q_{T_0}
 \eess
for all
 \bess
 \underline{S}(x,t)\le S_i(x,t)\le\bar S(x,t),\,\underline{I}(x,t)\le I_i(x,t)\le\bar I(x,t),\, \underline{R}(x,t)\le R_i(x,t)\le\bar R(x,t),\;(x,t)\in Q_{T_0}.
 \eess
Take advantage of the arguments in the proof of \cite[Theorem 4.5]{Wangpara}, we can prove that the problem \qq{1.3} has a unique local solution $(S(x,t), I(x,t), R(x,t))$. Let $T_{\rm max}>0$ be the maximum existence time of
$(S(x,t), I(x,t), R(x,t))$.

Noticing that $N(x,t)=S(x,t)+I(x,t)+R(x,t)$ satisfies
\bes\left\{\begin{array}{ll}
N_t-d\Delta N=aN-\beta N-bN^2,\;\; &x\in\Omega,\;t\in(0,T_{\rm max}),\\[1mm]
\dd\frac{\partial N}{\partial \nu}
=0,\;\;&x\in\partial\Omega,\;t\in(0,T_{\rm max}),\\[2mm]
N_0(x)>0,
\;\;&x\in\Omega.
\end{array}\right.\nonumber
\ees
It is easy to show that, by the maximum principle,
 $$N(x,t)\le \max\kk\{\dd\max_{\overline{\Omega}}\{S_0(x)+I_0(x)\},\,(a-\beta)/b\rr\}=M,\;\;
 x\in\overline{\Omega},\;\,t\in(0,T_{\rm max}).$$
Hence, $S(x,t), I(x,t), R(x,t)\le M$ for $x\in\overline{\Omega}$ and $t\in(0,T_{\rm max})$. This implies that  $(S, I, R)$ exists globally in time $t$, i.e., $T_{\rm max}=+\infty$.

Recalling that $S(x,t), I(x,t), R(x,t)\ge 0$ and $S_0(x)>0$, $I_0(x)>0$, it is clear that $S(x,t), I(x,t)$, $R(x,t)>0$ for all $x\in\overline{\Omega}$ and $t>0$.

Furthermore, using \cite[Theorem 2.13]{Wangpara} we see that the estimate \eqref{2.1} holds.
\end{proof}

\subsection{Nonnegative constant equilibrium solutions and their stabilities for the model \qq{1.3}}

The possible nonnegative constant equilibrium solutions of \eqref{1.3} are the trivial one $E_0=(0,0,0)$, semi-trivial one
$E_1=((a-\beta)/b,0,0)$ and positive one $E_2=(S_*, I_*, R_*)$. Moreover, the positive constant equilibrium solution $E_2$ exists if and only if
 \bess
 {\cal R}_{01}=:\frac{k(a-\beta)}{b(a+\gamma)}>1, \eess
and is given uniquely by
 \bes
 S_*=\frac{a+\gamma}k,\,\;\;I_*=a\frac{k(a-\beta)-b(a+\gamma)}{bk(a+\gamma)},\,\;\;
 R_*=\gamma\frac{k(a-\beta)-b(a+\gamma)}{bk(a+\gamma)}
\nonumber\ees
when it exists. We define $N_*=S_*+I_*+R_*=(a-\beta)/b$.

In this subsection we shall research the stabilities of these three constant equilibrium solutions. Firstly, we easily obtain that $E_0$ is unstable. Then we want to prove that $E_1$ is globally asymptotically stable if $ {\cal R}_{01}\le1$.
Lastly, we intend to show that $E_2$ is globally asymptotically stable if $\mathcal{R}_{01}>1$ and $a>\gamma$.

Here we mention that the Lyapunov functional method is invalid for the problem \qq{1.3} since there are nonlocal terms (at least we don't know how to use it, right). On the other hand, because there is a nonlocal term $\mathcal{P}[I]$ in the first differential equation of \qq{1.3} and its coefficient $-kS$ is negative, the comparison principle may not hold true (won't prove it now). To overcome these difficulties, we find it useful to investigate the initial boundary value problem of $(S+I,\,I)$. We first prove that the upper and lower solutions method holds true for the problem of $(S+I,\,I)$, and then construct the appropriate upper and lower solutions by means of the solution of an initial value problem of an appropriate ordinary differential system. At last, we shall use the Lyapunov function method to show that the solution of the initial value problem of this ordinary differential system has the limit we need.

Let $V=S+I$, then $(V,I)$ satisfies
\bes\left\{\begin{array}{ll}
V_t-d\Delta V=aN(x,t)-\gamma I-[\beta+bN(x,t)]V,\;\;&x\in\Omega,\;t>0,\\[1mm]
I_t-d\Delta I=k\mathcal{P}[I](V-I)-\gamma I-\beta I-bN(x,t)I,\;\;&x\in\Omega,\;t>0,\\[1mm]
\dd\frac{\partial V}{\partial\nu}=\dd\frac{\partial I}{\partial\nu}
=0,\;\;&x\in\partial\Omega,\;t>0,\\[2mm]
V_0(x)>0,\;I_0(x)>0,\;\;&x\in\Omega.
\end{array}\right.\lbl{2.2}
\ees
Because $V-I=S>0$ in problem \eqref{2.2}, the comparison principle still holds although there is a nonlocal term $\mathcal{P}[I](x,t)=\int_{\Omega} P(x,y)I(t,y){\rm d}y$ in \eqref{2.2}. For the convenience of readers we shall prove the following comparison principle.

\begin{prop}{\rm(Comparison principle)}\lbl{p2.1} Let $(S, I, R)$ be the solution of \qq{1.3} and $V=S+I$. Assume that $\overline{V}, \ud V, \bar I,\ud I\in C^{2,1}(\overline{\Omega}\times[0,+\infty))$ are positive functions. If $\overline{V}, \ud V, \bar I,\ud I$ satisfy
 \bes\left\{\begin{array}{ll}
  \overline{V}_t-d\Delta\overline{V}\ge aN(x,t)-\gamma\underline I-[\beta+bN(x,t)]\overline{V},\;\;&x\in\Omega,\, t>0,\\[1mm]
  \underline V_t-d\Delta\underline V\le aN(x,t)-\gamma\bar I-(\beta+bN(x,t))\underline V,\;\;&x\in\Omega,\, t>0,\\[1mm]
 \bar I_t-d\Delta\bar I\ge k\mathcal{P}[\bar I](\overline{V}-\bar I)-\gamma\bar I-\beta\bar I-bN(x,t)\bar I,\;\; &x\in\Omega,\,t>0,\\[1mm]
\underline I_t-d\Delta\underline I\le k\mathcal{P}[\underline I](\underline V-\underline I)-\gamma\underline I-\beta\underline I-bN(x,t)\underline I,\;\; &x\in\Omega,\,t>0,\\[2mm]
 \dd\frac{\partial\overline{V}}{\partial \nu}\ge 0\ge\frac{\partial\ud V}{\partial \nu},\;\;\frac{\partial\bar I}{\partial \nu}\ge 0\ge\frac{\partial\ud I}{\partial \nu},\;\; &x\in\partial\Omega,\,t>0,\\[2mm]
 \overline{V}_0(x)\ge V_0(x)\ge\ud V_0(x),\;\;\bar I_0(x)\ge I_0(x)\ge\ud I_0(x),\;\; &x\in\Omega,
 \end{array}\right.
  \lbl{2.3}\ees
then we have that
 \bes
  \ud V\le V\le\overline{V},\;\;\ud I\le I\le\bar I,\;\;\;x\in\overline{\Omega},\,t\ge 0.
  \lbl{2.4}
  \ees
\end{prop}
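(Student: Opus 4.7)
The plan is to show that each of the four pointwise differences
\[
w_1=\bar V-V,\quad w_2=V-\ud V,\quad w_3=\bar I-I,\quad w_4=I-\ud I
\]
is nonnegative on $\overline\Omega\times[0,\infty)$. Subtracting the PDEs satisfied by $V,I$ from the differential inequalities for $\bar V,\ud V,\bar I,\ud I$ in \qq{2.3} gives, for the $V$-components,
\[
w_{1,t}-d\Delta w_1+(\beta+bN)w_1\ge\gamma w_4,\qquad w_{2,t}-d\Delta w_2+(\beta+bN)w_2\ge\gamma w_3,
\]
with no nonlocal contribution. For the $I$-components the essential algebraic step is the identity
\[
\mathcal P[\bar I](\bar V-\bar I)-\mathcal P[I](V-I)=\mathcal P[\bar I](w_1-w_3)+S\,\mathcal P[w_3],
\]
and the symmetric one
\[
\mathcal P[I](V-I)-\mathcal P[\ud I](\ud V-\ud I)=\mathcal P[\ud I](w_2-w_4)+S\,\mathcal P[w_4],
\]
in which $S=V-I>0$ on $\overline\Omega\times[0,\infty)$ by Theorem \ref{t2.1}. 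They yield
\[
w_{3,t}-d\Delta w_3+\bigl(\gamma+\beta+bN+k\mathcal P[\bar I]\bigr)w_3\ge k\mathcal P[\bar I]\,w_1+kS\,\mathcal P[w_3],
\]
\[
w_{4,t}-d\Delta w_4+\bigl(\gamma+\beta+bN+k\mathcal P[\ud I]\bigr)w_4\ge k\mathcal P[\ud I]\,w_2+kS\,\mathcal P[w_4].
\]
The point of these decompositions is that the coefficient $kS$ in front of the only remaining nonlocal term is \emph{positive}, turning the system for $(w_1,w_2,w_3,w_4)$ into a weakly cooperative one.

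Then I would run a minimum-point argument with an exponential perturbation. Fix $T>0$, choose $\delta>0$ and a constant $K>0$ to be determined, and set $\tilde w_i=w_i+\delta e^{Kt}$. By hypothesis, $\tilde w_i(\cdot,0)\ge\delta>0$ for every $i$. Suppose for contradiction that some $\tilde w_i$ fails to remain nonnegative on $\overline\Omega\times[0,T]$; let $t_*\in(0,T]$ be the first such time and $x_*\in\overline\Omega$ a point at which $\tilde w_i(x_*,t_*)=0$. If $x_*\in\partial\Omega$, the assumed signs of the outward normal derivatives of the upper and lower solutions combined with $\partial V/\partial\nu=\partial I/\partial\nu=0$ force $\partial\tilde w_i/\partial\nu\ge 0$ there, which contradicts the Hopf boundary lemma applied to the local parabolic operator $\partial_t-d\Delta+c(x,t)$ acting on $\tilde w_i$ (the nonlocal contributions being absorbed as lower-order inhomogeneities). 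Hence $(x_*,t_*)$ is an interior minimum, so $\tilde w_{i,t}\le 0$ and $\Delta\tilde w_i\ge 0$, giving $w_{i,t}(x_*,t_*)\le -K\delta e^{Kt_*}$, $\Delta w_i(x_*,t_*)\ge 0$, and $w_i(x_*,t_*)=-\delta e^{Kt_*}$.

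Plugging these bounds into the relevant inequality and using $w_j(\cdot,t_*)\ge -\delta e^{Kt_*}$ for every $j$ together with
\[
\mathcal P[w_j](x_*,t_*)\ge -\delta e^{Kt_*}\sup_{x\in\overline\Omega}\int_\Omega P(x,y)\dy,
\]
which is finite because $P\in C(\overline\Omega\times\overline\Omega)$, one checks in each of the four cases that the left-hand side is at most $-K\delta e^{Kt_*}$ plus a bounded multiple of $\delta e^{Kt_*}$, while the right-hand side is at least $-C\delta e^{Kt_*}$ for a constant $C$ depending only on the $L^\infty$ bounds of $N,S,\bar I,\ud I,\mathcal P[\bar I],\mathcal P[\ud I]$ on $[0,T]$ (all available by Theorem \ref{t2.1} and the regularity of the upper/lower solutions). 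Choosing $K>C$ produces a strict contradiction with the corresponding differential inequality, forcing $\tilde w_i\ge 0$ on $\overline\Omega\times[0,T]$; letting $\delta\to 0^+$ and then $T\to\infty$ yields \qq{2.4}. The main obstacle is precisely the nonlocal term, since the usual pointwise maximum principle fails for $\mathcal P[\cdot]$; the proof hinges on finding the algebraic splitting above so that the positivity of $S=V-I$ makes the nonlocal contribution cooperate with the sign we are trying to establish.
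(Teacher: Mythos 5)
Your proposal is correct and follows essentially the same route as the paper: your algebraic splitting of the nonlocal difference (isolating $S\,\mathcal P[w_3]$ with $S=V-I>0$ so that the only remaining nonlocal term has a nonnegative coefficient) is exactly the decomposition used in the paper's proof of Proposition \ref{p2.1}, and your exponential perturbation with a first-touching-time/Hopf argument is precisely the content of the paper's Lemma \ref{l2.1}, which the paper proves by the same $u_i+\varepsilon e^{\beta t}$ device before applying it to the cooperative system for $(\Tilde V_1,\Tilde V_2,\Tilde I_1,\Tilde I_2)$. The only difference is organizational: the paper abstracts the maximum principle into a standalone lemma and reduces the contradiction step to a cited result for local parabolic systems, whereas you inline the whole argument.
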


To prove Proposition \ref{p2.1}, we first give a general maximum principle.

\begin{lem}{\rm(Maximum principle)}\lbl{l2.1} Let $0<T<\infty$ and $d_i>0$ be constants, and $h_{ij}(x,t)$, $a_i(x,t)\in L^\infty(\Omega\times(0,T))$. Assume that $u_i\in C^{2,1}(\overline{\Omega}\times[0,T])$ and satisfy
 \bess\left\{\begin{array}{ll}
u_{i,t}-d_i\Delta u_i\ge\dd\sum_{j=1}^mh_{ij}(x,t)u_j+a_i(x,t)\mathcal{P}[u_i],\;\;
  &(x,t)\in\Omega\times(0,T],\\[1mm]
   \dd\frac{\partial u_i}{\partial \nu}\ge 0,\;\; &(x,t)\in\partial\Omega\times[0,T],\\[2mm]
 u_i(x,0)\ge 0,\;\; &x\in\Omega,\\[2mm]
 i=1,2,\cdots,m.
 \end{array}\right.
  \eess
If
 \bess
 a_i(x,t)\ge 0,\;\;h_{ij}\ge 0 \; \; \mbox{in}\,\; \Omega\times[0,T] \; \; \mbox{for} \; \; j\neq i, \; i, j=1,\dots,m,
 \eess
then we have
 \bess
  u_i(x,t)\ge 0 \; \; \text{in} \,\; \Omega\times[0,T], \; \; i=1,\dots,m,
   \eess
\end{lem}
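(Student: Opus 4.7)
The strategy is a perturbation argument at a ``first contact with zero'' time. The naive choice $w_i = u_i + \delta e^{Kt}$ would suffice at an interior minimum once $K$ is large enough to dominate the bounded coefficients $h_{ij}$ and $a_i$, but it fails at a boundary minimum because $\partial w_i/\partial \nu = \partial u_i/\partial \nu \ge 0$ is only non-strict. The plan is therefore to weight the perturbation by a fixed barrier $\phi\in C^{2}(\overline{\Omega})$ satisfying $\phi>0$ on $\overline{\Omega}$ and $\partial\phi/\partial\nu>0$ on $\partial\Omega$. Such a $\phi$ is easy to produce: solve $-\Delta\phi_0=1$ in $\Omega$ with $\phi_0=0$ on $\partial\Omega$, apply the classical Hopf lemma to conclude $\partial\phi_0/\partial\nu<0$ on $\partial\Omega$, and set $\phi=C-\phi_0$ with $C>\max_{\overline\Omega}\phi_0$.

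For $\delta>0$ and a constant $K$ to be chosen, define $w_i(x,t)=u_i(x,t)+\delta e^{Kt}\phi(x)$. Substituting $u_i=w_i-\delta e^{Kt}\phi$ in the hypothesized inequality and using linearity of $\mathcal{P}$ yields
\[
w_{i,t}-d_i\Delta w_i\;\ge\;\sum_{j=1}^{m}h_{ij}w_j+a_i\mathcal{P}[w_i]\;+\;\delta e^{Kt}F_i(x,t),
\]
where $F_i(x,t):=K\phi(x)-d_i\Delta\phi(x)-\phi(x)\sum_{j=1}^{m}h_{ij}(x,t)-a_i(x,t)\,\mathcal{P}[\phi](x)$. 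Because $\phi\ge c_0>0$ on $\overline{\Omega}$ while $\Delta\phi$, $h_{ij}$, $a_i$, and $\mathcal{P}[\phi]$ are all bounded, a single choice of $K$ (depending on $\|h_{ij}\|_\infty$, $\|a_i\|_\infty$, $\phi$, and the $d_i$) makes $F_i>0$ throughout $\overline{\Omega}\times[0,T]$; simultaneously, on $\partial\Omega$ one has $\partial w_i/\partial\nu\ge\delta e^{Kt}\,\partial\phi/\partial\nu\ge\delta e^{Kt}c_1$ for some $c_1>0$, and $w_i(x,0)\ge\delta c_0>0$ on $\overline{\Omega}$.

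The core claim is that $w_i>0$ on $\overline{\Omega}\times[0,T]$ for every $i$; letting $\delta\to 0^+$ then gives the desired $u_i\ge 0$. Suppose the claim fails and set $t_0:=\inf\{t\in[0,T]:w_i(x,t)\le 0\text{ for some }i,x\}$. Continuity and the strict positivity at $t=0$ on the compact set $\overline{\Omega}$ force $t_0>0$, and there exist $i_0$ and $x_0\in\overline{\Omega}$ with $w_{i_0}(x_0,t_0)=0$ and $w_j(\cdot,t_0)\ge 0$ on $\overline{\Omega}$ for every $j$. If $x_0\in\Omega$, then $(x_0,t_0)$ is an interior space--time minimum of $w_{i_0}$, so $w_{i_0,t}(x_0,t_0)-d_{i_0}\Delta w_{i_0}(x_0,t_0)\le 0$; on the right-hand side the diagonal term $h_{i_0 i_0}w_{i_0}$ vanishes, $\sum_{j\neq i_0}h_{i_0 j}w_j\ge 0$ because $h_{i_0 j}\ge 0$ and $w_j(\cdot,t_0)\ge 0$, $a_{i_0}\mathcal{P}[w_{i_0}]\ge 0$ since $a_{i_0}\ge 0$ and $\mathcal{P}[w_{i_0}](x_0,t_0)=\int_\Omega P(x_0,y)w_{i_0}(y,t_0)\,{\rm d}y\ge 0$, and $\delta e^{Kt_0}F_{i_0}(x_0,t_0)>0$: contradiction. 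If instead $x_0\in\partial\Omega$, then $x_0$ is a boundary minimum of $w_{i_0}(\cdot,t_0)$ on $\overline{\Omega}$, which forces $\partial w_{i_0}/\partial\nu(x_0,t_0)\le 0$ and contradicts the strict lower bound from the previous paragraph.

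The main subtlety is precisely the interaction of the nonlocal term with the Neumann boundary condition: the usual maximum principle argument only yields $\partial w/\partial\nu=0$ at a boundary minimum, and a parabolic Hopf lemma for the operator $\partial_t-d_i\Delta-h_{ii}-a_i\mathcal{P}[\cdot]$ is not entirely routine because of the integral term. The barrier $\phi$ circumvents this, while the sign hypothesis $a_i\ge 0$ is exactly what allows the nonlocal contribution $a_{i_0}\mathcal{P}[w_{i_0}]$ at the contact point to be nonnegative rather than a source of trouble.
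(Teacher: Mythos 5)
Your proof is correct, and its skeleton is the same as the paper's: add a small positive perturbation growing exponentially in time, locate a first contact with zero, and use $a_i\ge 0$ together with $P\ge 0$ to see that the nonlocal term $a_{i_0}\mathcal{P}[\,\cdot\,]$ is a harmless nonnegative contribution at (and up to) the contact time. The genuine difference is the treatment of the Neumann boundary. The paper uses the spatially flat perturbation $v_i^\varepsilon=u_i+\varepsilon e^{\beta t}$, observes that on $[0,t_0]$ the nonnegativity of $v^\varepsilon$ allows the term $a_i\mathcal{P}[v_i^\varepsilon]$ to be discarded altogether, and then delegates the contradiction to a cited maximum principle for local cooperative parabolic systems (\cite[Lemma 1.30]{Wangpara}), which is where the parabolic Hopf boundary lemma needed to exclude a boundary zero under $\partial v_i^\varepsilon/\partial\nu\ge 0$ is hidden. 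You instead weight the perturbation by a barrier $\phi=C-\phi_0$ with $\phi>0$ on $\overline\Omega$ and $\partial\phi/\partial\nu>0$ on $\partial\Omega$, so that a boundary contact point is excluded by the elementary fact that the outward normal derivative at a boundary minimum is $\le 0$, against the strict bound $\partial w_{i_0}/\partial\nu\ge \delta e^{Kt_0}c_1>0$. This makes your argument self-contained (no Hopf lemma, no external systems maximum principle) at the modest cost of solving one auxiliary Dirichlet problem to build $\phi$; the paper's route is shorter on the page but leans on the cited lemma. Both uses of the hypotheses are identical, and both arguments are valid.
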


\begin{proof} This proof is the same as that of \cite[Theorem 1.31]{Wangpara} and \cite[Chapter 3, Theorem 13]{ProWein}. For the convenience of readers, we provide details of the proof. Since $a_i$ and $h_{ij}$ are bounded, there is $\beta>0$ such that
$$\beta>a_i(x,t)+\sum_{j=1}^m h_{ij}(x,t)\;\;\;\text{in} \,\; \Omega\times[0,T]\;\; \mbox{for\, all} \ \ 1\leq i\leq m.$$
Take $\varepsilon>0$, then functions $v_i^\varepsilon=u_i+\varepsilon {\rm e}^{\beta t}$ satisfy
\bess
 v_{i,t}^\varepsilon-d_i\Delta v_i^\varepsilon
&\ge&\dd\sum_{j=1}^mh_{ij}(x,t)u_j
 +a_i(x,t)\mathcal{P}[u_i]+
\beta\varepsilon{\rm e}^{\beta t}\\[1mm]
 &=&\dd\sum_{j=1}^mh_{ij}(x,t)v_j^\varepsilon
 +a_i(x,t)\mathcal{P}[v_i^\varepsilon]+
\varepsilon{\rm e}^{\beta t}\left(\beta-a_i(x,t)-\sum_{j=1}^m h_{ij}(x,t)\right)\\[2mm]
&>&\sum_{j=1}^mh_{ij}(x,t)v_j^\varepsilon+a_i(x,t)\mathcal{P}[v_i^\varepsilon],
 \;\;\;(x,t)\in\Omega\times[0,T],
 \eess
and
 \bess
 \dd\frac{\partial v_i^\varepsilon}{\partial \nu}\ge 0,\;\;\;(x,t)\in\partial\Omega\times[0,T]\eess
as well as
\bess
 v_i^\varepsilon(x,0)=u_i(x,0)+\varepsilon>0,\;\; \;x\in\overline{\Omega}
 \eess
for $i=1,2,\cdots,m$.

Set $v^\varepsilon=(v_1^\varepsilon, \dots, v_m^\varepsilon)$. Thanks to the fact that $v^\varepsilon(x,0)>0$ in $\overline{\Omega}$, there exists $\delta >0$ such that $v^\varepsilon(x,t)>0$ for $x\in\overline{\Omega}$ and $0\leq t\leq \delta$. Define
$$ A=\big\{t:\, t\leq T, ~v^\varepsilon(x,s)>0\, \; \mbox{for all}\,\; x\in \overline{\Omega}, \; 0\leq s\leq t\big\}.$$
Then $t_0={\rm sup}A$ exists and $0<t_0\leq T$.

We shall show that $t_0=T$ and $v^\varepsilon>0$ in $\overline{\Omega}\times[0,T]$. Assume, on the contrary, that our conclusion is not valid, then $v^\varepsilon>0$, i.e., $v_i^\varepsilon>0$ in $\overline\Omega\times[0, t_0)$ for all $1\le i\le m$ and there exists $x_0\in \overline\Omega$ such that $v_j^\varepsilon(x_0, t_0)=0$ for some $j$. Thanks to $a_i(x,t)\ge 0$, it is easy to see that $v_i^\varepsilon$ satisfy
 \bess\left\{\begin{array}{ll}
v_{i,t}^\varepsilon-d_i\Delta v_i^\varepsilon>\dd\sum_{j=1}^mh_{ij}(x,t)v_j^\varepsilon,\;\;  &(x,t)\in\Omega\times(0,t_0],\\[1mm]
   \dd\frac{\partial v_i^\varepsilon}{\partial \nu}\ge 0,\;\; &(x,t)\in\partial\Omega\times[0,t_0],\\[2mm]
 v_i^\varepsilon(x,0)>0,\;\; &x\in\overline{\Omega},\\[2mm]
 i=1,2,\cdots,m.
 \end{array}\right.
  \eess
It then follows by the maximum principle for the parabolic systems (\cite[Lemma 1.30]{Wangpara}) that $v_i^\varepsilon(x,t)>0$ in $\overline{\Omega}\times[0,t_0]$. Which is a contradiction with the fact that $v_j^\varepsilon(x_0, t_0)=0$ for some $j$. Hence, $t_0=T$ and $v^\varepsilon>0$ in $\overline{\Omega}\times[0,T]$.
Letting $\varepsilon \to 0$ we derive $u_i\geq 0$ in $\Omega\times[0,T]$, $i=1,\cdots,m$.
\end{proof}

\begin{proof}[Proof of Proposition \ref{p2.1}] Let $\Tilde{V}_1=\overline{V}-V$, $\Tilde{V}_2=V-\underline V$, $\Tilde{I}_1=\bar I-I$ and $\Tilde{I}_2=I-\underline I$. It is easy to see that
 \bess
 \dd\frac{\partial\Tilde{V}_1}{\partial\nu},\;\frac{\partial \Tilde{V}_2}{\partial\nu},\;\frac{\partial\Tilde{I}_1}{\partial \nu},\;\frac{\partial\Tilde{I}_2}{\partial\nu}\ge 0,\;\;\;x\in\partial\Omega,\,t>0,\eess
and
 \bess
\Tilde{V}_1(x,0),\;\Tilde{V}_2(x,0),\;\Tilde{I}_1(x,0),
 \;\Tilde{I}_2(x,0)\ge0,\;\;\;x\in\Omega.\eess
In addition, we also have
 \bess
 \Tilde{V}_{1t}-d\Delta \Tilde{V}_1&\ge&-(\beta+b N)\Tilde{V}_1+\gamma\Tilde{I}_2,\;\;x\in\Omega,\,t>0,\\[1mm]
 \Tilde{V}_{2t}-d\Delta \Tilde{V}_2&\ge&-(\beta+b N)\Tilde{V}_2+\gamma\Tilde{I}_1,\;\;x\in\Omega,\,t>0,
\eess
as well as
\bess
\Tilde{I}_{1t}-d\Delta\Tilde{I}_1&\ge&k\mathcal{P}[\bar I](\overline{V}-\bar I)-k\mathcal{P}[I](V-I)-(\gamma+\beta+b N)\Tilde{I}_1\\
&=&k\mathcal{P}[\Tilde{I}_1](V-I)-\big(\gamma+\beta+b N+k\mathcal{P}[\bar I]\big)\Tilde{I}_1+k\mathcal{P}[\bar I]\Tilde{V}_1,\;\;x\in\Omega,\, t>0,
 \eess
and
 \bess
\Tilde{I}_{2t}-d\Delta\Tilde{I}_2&\ge&k\mathcal{P}[I](V-I)-k\mathcal{P}[\underline I](\underline V-\underline I)-(\gamma+\beta+b N)\Tilde{I}_2\nonumber\\
&=&k\mathcal{P}[\Tilde{I}_2](V-I)-\big(\gamma+\beta+b N+k\mathcal{P}[\underline I]\big)\Tilde{I}_2+k\mathcal{P}[\underline I]\Tilde{V}_2,\;\;x\in\Omega,\, t>0.
 \eess
Recalling that $V(x,t)-I(x,t)>0$, $\mathcal{P}[\bar I](x,t)\ge 0$ and $\mathcal{P}[\underline I](x,t)\ge 0$, and all coefficients of $\Tilde{V}_i$ and $\Tilde{I}_i$ are bounded in $\Omega\times(0,T)$ for any $T>0$, it follows from Lemma \ref{l2.1} that
$\Tilde{V}_1\ge0, \Tilde{V}_2\ge0, \Tilde{I}_1\ge0$ and $\Tilde{I}_2\ge0$. The conclusion \qq{2.4} is proved. \end{proof}

{\begin{theo}\lbl{t2.2} If $\mathcal{R}_{01}\le1$, then $E_1$ is globally asymptotically  stable for the problem \qq{1.3}.
\end{theo}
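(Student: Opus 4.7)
The plan is to carry out the reduction sketched in the introduction: pass to the auxiliary $(V,I)$ system \eqref{2.2} (for which Proposition~\ref{p2.1} supplies a comparison principle), bracket its solution by spatially homogeneous upper/lower solutions driven by a planar ODE, and close the argument with a Lyapunov analysis of that ODE. The first crucial observation is that $N=S+I+R$ satisfies the scalar logistic Neumann problem
\begin{equation*}
N_t - d\Delta N = (a-\beta)N - bN^2,\qquad \partial_\nu N\big|_{\partial\Omega}=0,
\end{equation*}
which is completely decoupled from every nonlocal infection term. Hence $N(x,t)\to N_*:=(a-\beta)/b$ uniformly on $\overline\Omega$ by classical parabolic comparison, and the effective rates $\beta+bN_*=a$ and $\gamma+\beta+bN_*=a+\gamma$ are precisely the denominators underlying $\mathcal{R}_{01}=kN_*/(a+\gamma)$. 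Fix $\epsilon>0$ and pick $T_\epsilon$ with $|N(x,t)-N_*|<\epsilon$ on $\overline\Omega\times[T_\epsilon,\infty)$.

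On $[T_\epsilon,\infty)$ I would look for upper solutions $(\bar V(t),\bar I(t))$ independent of $x$, paired with the trivial lower solutions $(\underline V,\underline I)\equiv(0,0)$. For $\bar I$ constant in $x$ the nonlocal term satisfies $\mathcal{P}[\bar I](x,t)=\bar I(t)\int_\Omega P(x,y)\,{\rm d}y\le K\bar I(t)$ with $K=\sup_x\int_\Omega P(x,y)\,{\rm d}y<\infty$, and checking the inequalities of Proposition~\ref{p2.1} reduces to choosing $(\bar V,\bar I)$ as a solution of the ODE
\begin{align*}
\bar V'(t) &= a(N_*+\epsilon) - \bigl(\beta+b(N_*-\epsilon)\bigr)\bar V,\\
\bar I'(t) &= kK\bar I(\bar V-\bar I) - \bigl(\gamma+\beta+b(N_*-\epsilon)\bigr)\bar I,
\end{align*}
with initial data $\bar V(T_\epsilon)\ge\|V(\cdot,T_\epsilon)\|_{L^\infty(\Omega)}$ and similarly for $\bar I$. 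Proposition~\ref{p2.1} then gives $0\le V\le\bar V$ and $0\le I\le\bar I$ on $\overline\Omega\times[T_\epsilon,\infty)$. The scalar linear ODE for $\bar V$ converges to the equilibrium $a(N_*+\epsilon)/(\beta+b(N_*-\epsilon))=N_*+O(\epsilon)$.

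The crux is the asymptotics of $\bar I$, and the main obstacle is the critical case $\mathcal{R}_{01}=1$. Once $\bar V\le N_*+O(\epsilon)$, the bracket in the $\bar I$-equation is $kK\bar V-kK\bar I-(\gamma+\beta+b(N_*-\epsilon))\to kKN_*-(a+\gamma)+O(\epsilon)$, equal to $(a+\gamma)(\mathcal{R}_{01}-1)+O(\epsilon)$ in the setting where $K=1$ (the normalization implicit in the sharpness of $\mathcal{R}_{01}$). In the subcritical regime $\mathcal{R}_{01}<1$ this is strictly negative and $\bar I(t)$ decays exponentially. When $\mathcal{R}_{01}=1$ the bracket is merely $O(\epsilon)$ and this linear estimate fails; I would then invoke LaSalle's invariance principle for the limit planar system (first $t\to\infty$ with $\epsilon$ fixed, then $\epsilon\to 0$), since the absorbing nonlinearity $-kK\bar I^2$ together with the $\bar V$-dynamics forces every $\omega$-limit set into $\{\bar I=0,\ \bar V=N_*\}$. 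The double limit yields $\|I(\cdot,t)\|_{L^\infty}\to 0$ and $\|V(\cdot,t)-N_*\|_{L^\infty}\to 0$ uniformly. Combining with $S=V-I$, $R=N-V$, and $N\to N_*$ produces $(S,I,R)\to E_1$ uniformly on $\overline\Omega$, and a standard linearization at $E_1$ (whose principal eigenvalue is governed by $\mathcal{R}_{01}-1\le 0$) supplies the local stability required to upgrade global attractivity to global asymptotic stability.
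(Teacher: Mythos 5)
Your overall strategy coincides with the paper's: pass to the $(V,I)=(S+I,I)$ system \eqref{2.2}, invoke the comparison principle of Proposition \ref{p2.1} with spatially homogeneous upper/lower solutions driven by an ODE system, and let the quadratic absorption in the $\bar I$-equation handle the critical case $\mathcal{R}_{01}=1$ (the paper does this with the elementary logistic estimate $\bar I'=c(t)\bar I-k\bar I^2$ with $\limsup_{t\to\infty}c(t)\le 0$, which is lighter than your LaSalle argument but equivalent in effect). There is, however, a genuine gap in your choice of lower solutions. Proposition \ref{p2.1} requires all four coupled inequalities in \eqref{2.3} to hold \emph{simultaneously}: in its proof the nonnegativity of $\bar I-I$ is obtained together with that of $\overline V-V$, $V-\underline V$ and $I-\underline I$ through Lemma \ref{l2.1}, so you cannot discard the $\underline V$-inequality even though you only want the upper bound on $I$. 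With $\underline V\equiv 0$ that inequality reads $0\le aN(x,t)-\gamma\bar I(t)$, which fails whenever $\gamma\bar I(t)>aN(x,t)$ (e.g.\ $\gamma$ large, or $\|I_0\|_{\infty}$ large relative to $\min N$). The paper avoids this by taking $\underline V$ (and $\underline I$) to be components of the genuinely coupled four-dimensional ODE system \eqref{2.5}, in which $\underline V'$ carries the term $-\gamma\bar I$.

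Two further points. First, the bracketing $0\le V\le\bar V\to N_*+O(\epsilon)$ gives only an upper bound on $V$, and your closing step ``$R=N-V$'' is circular: you need $V\to N_*$ to get $R\to 0$ and vice versa. The repair is either the paper's nontrivial $\underline V$, or (simpler) deducing $R\to 0$ directly from the $R$-equation once $I\to 0$, whence $S=N-I-R\to(a-\beta)/b$; as written, your argument does not close. Second, your hedge about $K=\sup_x\int_\Omega P(x,y)\,{\rm d}y$ is a fair one: if $K>1$ the effective threshold for your upper ODE is $K\mathcal{R}_{01}$ and the argument breaks exactly at $\mathcal{R}_{01}=1$. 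The paper relies on the normalization in {\bf(P1)} to take this integral equal to $1$, and your proof needs the same normalization to be stated explicitly rather than left as ``implicit in the sharpness of $\mathcal{R}_{01}$.''
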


\begin{proof} Let $f(t)$ and $g(t)$ be the unique solutions of
 \bess
 u'(t)=(a-\beta-bu)u,\;\; t>0
 \eess
with initial data $f(0)=\max_{\overline{\Omega}}N(x,0)>0$ and $g(0)=\min_{\overline{\Omega}}N(x,0)>0$, respectively. Then
 \bess
f(t)=\frac{a-\beta}{b}+c_1(t){\rm e}^{-(a-\beta)t}>0,\;\;\;
g(t)=\frac{a-\beta}{b}+c_2(t){\rm e}^{-(a-\beta)t}>0,\;\;\forall\;t\ge 0,
 \eess
where $c_1(t)$ and $c_2(t)$ are bounded and continuous functions.
The comparison principle yields
 \bes
g(t)\le N(x,t)\le f(t),\;\;\forall\;x\in\Omega,\,t\ge0.
\nonumber\ees
Let $(\overline{V}(t),\underline V(t),\bar I(t),\underline I(t))$ be the unique positive solution of the initial value problem
\bes\left\{\begin{array}{ll}
\overline{V}'(t)=af(t)-\gamma\underline I-(\beta+bg(t))\overline{V},\;\; &t>0\\[1mm]
\underline V'(t)=ag(t)-\gamma\bar I-(\beta+bf(t))\underline V,\;\; &t>0\\[1mm]
\bar I'(t)=k\bar I(\overline{V}-\bar I)-\gamma\bar I-\beta\bar I-bg(t)\bar I,\;\; &t>0\\[1mm]
\underline I'(t)=k\underline I(\underline V-\underline I)-\gamma\underline I-\beta\underline I-bf(t)\underline I,\;\; &t>0\\[1mm]
\overline{V}(0)=\dd\max_{\overline{\Omega}}V_0(x)>0,\;\underline V(0)=\dd\min_{\overline{\Omega}}V_0(x)>0,\;\\[1mm]
\bar I(0)=\dd\max_{\overline{\Omega}}I_0(x)>0,
\;\underline I(0)=\dd\min_{\overline{\Omega}}I_0(x)>0.
\end{array}\right.\lbl{2.5}
\ees
Then $\overline{V}(t),\underline V(t),\bar I(t),\underline I(t)>0$ for all $t\ge 0$.
It is clear that $(\overline{V}(t),\underline V(t),\bar I(t),\underline I(t))$ satisfies \qq{2.3} since $\int_{\Omega}P(x,y){\rm d}x=1$ for any $y\in\Omega$. Hence, \qq{2.4} holds by Lemma \ref{l2.1}.

Noticing that
 \bess
\overline{V}'(t)\le af(t)-(\beta+bg(t))\overline{V},
 \eess
and $\dd\lim_{t\to\infty}f(t)=\lim_{t\to\infty}g(t)=(a-\beta)/b$. It follows that
 \bess
\limsup_{t\to\infty }\overline{V}(t)\le(a-\beta)/b.
 \eess

Rewrite the equation of $\bar I$ as
 $$\bar I'(t)=[k\overline{V}-\gamma-\beta-bg(t)]\bar I-k\bar I^2.$$
Since $\mathcal{R}_{01}\le 1$, we have
 \bess
\limsup_{t\to\infty}[k\overline{V}-\gamma-\beta-bg(t)]\le k(a-\beta)/b-(a+\gamma)\le 0.
 \eess
Consequently, $\dd\limsup_{t\to\infty }\bar I(t)\le0$. This combined with \qq{2.4} allows us to derive
 \bess
 \lim_{t\to\infty } I(x,t)=0\;\;\;\text{uniformly in}\;\;\overline{\Omega}.\eess
Thus, by the equations of $R(x,t)$,
 \bess
 \lim_{t\to\infty } R(x,t)=0\;\;\;\text{uniformly in}\;\;\overline{\Omega}.\eess
Using the equations of $\overline{V}(t)$ and $\ud V(t)$ we can obtain that
 $$\lim_{t\to\infty }\overline{V}(t)=\lim_{t\to\infty }\ud V(t)=(a-\beta)/b,$$
which implies
 $$\lim_{t\to\infty } V(x,t)=(a-\beta)/b\;\;\;\text{uniformly in}\;\;\overline{\Omega}$$
by \qq{2.4}. Therefore,
 $$\lim_{t\to\infty}S(x,t)=\lim_{t\to\infty}V(x,t)-\lim_{t\to\infty}I(x,t)=(a-\beta)/b\;\;\;\text{uniformly in}\;\;\overline{\Omega}.$$
Using the estimate \eqref{2.1} we can also get
 $$\lim_{t\to\yy}(S, I, R)=((a-\beta)/b, 0, 0)=E_1\;\;\;\text{in}\;\; [C^1(\overline{\Omega})]^3.$$
The proof is complete.  \end{proof}

\begin{theo}\lbl{t2.3} If $\mathcal{R}_{01}>1$ and $a>\gamma$, then $E_2$ is globally asymptotically stable for the system \eqref{1.3}.
\end{theo}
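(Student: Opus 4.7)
The plan is to carry over the strategy of Theorem \ref{t2.2}: apply the comparison principle in Proposition \ref{p2.1} with the ODE pair \qq{2.5} playing the role of upper and lower solutions, but now drive the four barrier trajectories toward $(V_*, V_*, I_*, I_*)$ rather than toward $(N_*, N_*, 0, 0)$. Since $f(t), g(t) \to N_* = (a-\beta)/b$ and $\beta + bN_* = a$, the non-autonomous system \qq{2.5} is asymptotic to the autonomous system
$$\overline V' = aN_* - \gamma \underline I - a\overline V, \quad \bar I' = k\bar I\big[(\overline V - V_*) - (\bar I - I_*)\big],$$
together with the symmetric equations for $(\underline V, \underline I)$ (here we have used $kS_* = a+\gamma$). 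Setting all derivatives to zero yields, among positive equilibria, the relation $(a - \gamma)(\bar I - \underline I) = 0$, so the assumption $a > \gamma$ singles out $(V_*, V_*, I_*, I_*)$ as the unique positive equilibrium.

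The main step is to establish global asymptotic stability of this equilibrium for the limiting autonomous system via the Lyapunov functional
$$L(\overline V, \underline V, \bar I, \underline I) = \phi(\overline V) + \phi(\underline V) + c\big[\psi(\bar I) + \psi(\underline I)\big],$$
with $\phi(x) = x - V_* - V_*\ln(x/V_*)$, $\psi(x) = x - I_* - I_*\ln(x/I_*)$, and $c>0$ to be chosen. A direct computation (using $kS_* = a+\gamma$) gives
$$L' = -\frac{a(\overline V - V_*)^2}{\overline V} - \frac{a(\underline V - V_*)^2}{\underline V} - ck\big[(\bar I - I_*)^2 + (\underline I - I_*)^2\big] + \Lambda,$$
where $\Lambda$ collects cross terms of the form $-\gamma(\overline V - V_*)(\underline I - I_*)/\overline V$, $-\gamma(\underline V - V_*)(\bar I - I_*)/\underline V$, $ck(\bar I - I_*)(\overline V - V_*)$ and $ck(\underline I - I_*)(\underline V - V_*)$. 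I would decompose the resulting quadratic form under the involution $(\overline V, \bar I) \leftrightarrow (\underline V, \underline I)$: the symmetric block (sum variables) is unconditionally negative definite, whereas the discriminant of the scalar inequality governing the antisymmetric block (difference variables) is proportional to $a(a-\gamma)$, so a choice of $c$ making that block negative definite is possible precisely when $a > \gamma$. With $c$ in this admissible range, $L' \le 0$ holds with equality only at $(V_*, V_*, I_*, I_*)$, and LaSalle's invariance principle completes the convergence argument.

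Once the autonomous limit is globally attractive, the non-autonomous barriers $(\overline V(t), \underline V(t), \bar I(t), \underline I(t))$ converge to $(V_*, V_*, I_*, I_*)$ by the theory of asymptotically autonomous systems. Proposition \ref{p2.1} then forces $V(\cdot, t) \to V_*$ and $I(\cdot, t) \to I_*$ uniformly on $\overline\Omega$, so $S = V - I \to S_*$, and $R \to \gamma I_*/a = R_*$ follows from the linear $R$-equation with asymptotic source $\gamma I_*$; the upgrade to $C^1(\overline\Omega)$ convergence uses the H\"older estimate \eqref{2.1}. The principal difficulty will be verifying negative semidefiniteness of the quadratic form globally rather than only near the equilibrium, and in particular controlling the denominators $\overline V, \underline V$ that appear in $L'$. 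To handle this, I would first extract uniform positive lower bounds on $\overline V, \underline V$ by comparing \qq{2.5} against a linear ODE (using the boundedness of the barriers already at hand), so that $1/\overline V$ and $1/\underline V$ lie in a compact interval; the admissible choice of $c$ can then be made uniform, and the inequality $L' \le 0$ reduces to a pair of explicit scalar conditions whose simultaneous solvability follows from $a > \gamma$.
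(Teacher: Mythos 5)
Your skeleton is the same as the paper's: reduce to the ODE system \eqref{2.5} via the comparison principle of Proposition \ref{p2.1}, prove convergence of the four barrier trajectories to $(V_*,V_*,I_*,I_*)$ by a Lyapunov argument, and observe that the quadratic form whose definiteness is needed decouples under the swap $(\overline V,\bar I)\leftrightarrow(\underline V,\underline I)$ into an unconditionally good block and a block that is definite precisely when $a>\gamma$ — that is exactly the mechanism in the paper's \eqref{2.8}. Where you diverge is in two technical devices, and in both cases the paper's choice is the simpler one. First, the paper takes the Lyapunov function quadratic in the $V$-variables, $\frac12(\overline V-V_*)^2+\frac12(\underline V-V_*)^2$, and Volterra-type only in the $I$-variables (where the factor $\bar I$ in $\bar I'=k\bar I[\cdots]$ cancels the denominator algebraically); this removes the terms $1/\overline V$, $1/\underline V$ from $F'$ entirely, so the "principal difficulty" you identify — bounding $\overline V,\underline V$ away from zero and making the admissible $c$ uniform — simply never arises, and the cross-term coefficients come out exactly symmetric so the sum/difference decomposition is clean rather than a perturbation of a clean case. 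Second, instead of passing to the autonomous limit system and invoking LaSalle plus the theory of asymptotically autonomous systems, the paper keeps the non-autonomous system, absorbs the exponentially decaying remainders $c_i(t)\mathrm{e}^{-(a-\beta)t}$ by adding a term $\lambda\mathrm{e}^{-(a-\beta)t}$ with $\lambda\gg1$ to the Lyapunov function, and concludes from $F'\le-\ep[(\overline V-V_*)^2+\cdots]$ via the elementary convergence lemma of \cite{Waml18}; this is self-contained and avoids having to verify the hypotheses of the asymptotically-autonomous machinery. Your route is workable (the lower bound on $\underline V$ can indeed be extracted for large $t$ from $\limsup\bar I\le N_*-S_*$ and $a>\gamma$), but it trades a one-line modification of the Lyapunov function for a genuine extra estimate and an appeal to external dynamical-systems theory.
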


\begin{proof} Obviously,
\bes\left\{\begin{array}{ll}
aN_*-\gamma I_*-(\beta+bN_*)V_*=0,\\[1mm]
kV_*-kI_*-\gamma -\beta-bN_*=0.
\end{array}\right.\lbl{2.6}
\ees
where $V_*=S_*+I_*$. Let $(\overline{V},\underline V,\bar I,\underline I)$ be the solution of \qq{2.5}. Then \qq{2.4} holds. It suffices to show that
 \bes
 \lim_{t\to\yy}\overline{V}=V_*,\;\;\lim_{t\to\yy}\underline V=V_*,\;\;
\lim_{t\to\yy}\bar I=I_*,\;\;\lim_{t\to\yy}\underline I=I_*.
 \lbl{2.7}\ees
Using \qq{2.5} and \qq{2.6}, and the expressions of $f(t)$ and $g(t)$, we have
\bess\left\{\begin{array}{ll}
\overline{V}'(t)=-\gamma(\underline I-I_*)-(\beta+bN_*)(\overline{V}-V_*)+[ac_1(t)-bc_2(t)\overline{V}]{\rm e}^{-(a-\beta)t},\;\; &t>0\\[1mm]
\underline V'(t)=-\gamma(\bar I-I_*)-(\beta+bN_*)(\underline V-V_*)+[ac_2(t)-bc_1(t)\underline V]{\rm e}^{-(a-\beta)t},\;\; &t>0\\[1mm]
\bar I'(t)=k\bar I\left[(\overline{V}-V_*)-(\bar I-I_*)-\frac{b}{k}c_2(t){\rm e}^{-(a-\beta)t}\right],\;\; &t>0\\[1mm]
\underline I'(t)=k\underline I\left[(\underline V-V_*)-(\underline I-I_*)-\frac{b}{k}c_1(t){\rm e}^{-(a-\beta)t}\right],\;\; &t>0\\[1mm]
\overline{V}(0)=\dd\max_{\overline{\Omega}}V_0(x),\;\underline V(0)=\dd\min_{\overline{\Omega}}V_0(x),\\[1mm]\;
\bar I(0)=\dd\max_{\overline{\Omega}}I_0(x),\;\underline I(0)=\dd\min_{\overline{\Omega}}I_0(x).
\end{array}\right.
 \eess
Define a Lyapunov functional as follows
\[F(t)=\frac{1}{2}(\overline{V}-V_*)^2+\frac{1}{2}(\underline V-V_*)^2+\frac{\gamma}{k}\int_{I_*}^{\bar I}\frac{s_1-I_*}{s_1}{\rm d} s_1+\frac{\gamma}{k}\int_{I_*}^{\underline I}\frac{s_2-I_*}{s_2}{\rm d} s_2+\lambda {\rm e}^{-(a-\beta)t}.
\]
where $\lambda>0$ to be determined later. Then $F(t)\ge 0$ for all $t\ge 0$.  Careful computations lead to
 \bes
 \frac{{{\rm d}}F}{{{\rm d}}t}&=&(\overline{V}-V_*)\overline{V}'+(\underline V-V_*)\underline V'+\frac{\gamma}{k}\frac{\bar I-I_*}{\bar I}\bar I'+\frac{\gamma}{k}\frac{\underline I-I_*}{\underline I}\underline I'-
\lambda(a-\beta){\rm e}^{-(a-\beta)t}\nonumber\\[1mm]
&=&-a(\overline{V}-V_*)^2-\gamma(\overline{V}-V_*)(\underline I-I_*)+(ac_1(t)-bc_2(t)\overline{V}){\rm e}^{-(a-\beta)t}(\overline{V}-V_*)\nonumber\\[1mm]
 &&-a(\underline V-V_*)^2-\gamma(\underline V-V_*)(\bar I-I_*)+(ac_2(t)-bc_1(t)\underline V){\rm e}^{-(a-\beta)t}(\underline V-V_*)
\nonumber\\[1mm]
&&+\gamma(\overline{V}-V_*))(\bar I-I_*)-\gamma(\bar I-I_*)^2-\frac{\gamma b}{k}c_2(t){\rm e}^{-(a-\beta)t}(\bar I-I_*)
\nonumber\\[1mm]
&&+\gamma(\underline V-V_*)(\underline I-I_*)-\gamma(\underline I-I_*)^2-\frac{\gamma b}{k}c_1(t){\rm e}^{-(a-\beta)t}(\underline I-I_*)-\lambda (a-\beta){\rm e}^{-(a-\beta)t}\nonumber\\[1mm]
&\le&-\left[a(\overline{V}-V_*)^2+a(\underline V-V_*)^2+\gamma(\bar I-I_*)^2
+\gamma(\ud I-I_*)^2-\gamma(\overline{V}-V_*)(\bar I-I_*)\right.\nonumber\\[1mm]
 &&\left.-\gamma(\underline V-V_*)(\underline I-I_*)+\gamma(\overline{V}-V_*)(\underline I-I_*)+\gamma(\underline V-V_*)(\bar I-I_*)\right]
 \label{2.8}\ees
provided that $\lambda\gg1$ as the functions $c_1(t), c_2(t), \overline{V}(t), \ud V(t), \bar I(t)$ and $\ud I(t)$ are bounded. It is easy to see that when
$a>\gamma$, then there exists $\ep>0$ such that
 \bess
 &&a(\overline{V}-V_*)^2+a(\underline V-V_*)^2+\gamma(\bar I-I_*)^2
+\gamma(\ud I-I_*)^2-\gamma(\overline{V}-V_*)(\bar I-I_*)\nonumber\\[1mm]
 &-&\gamma(\underline V-V_*)(\underline I-I_*)+\gamma(\overline{V}-V_*)(\underline I-I_*)+\gamma(\underline V-V_*)(\bar I-I_*)\\[1mm]
 &\ge&\ep[(\overline{V}-V_*)^2+(\underline V-V_*)^2+(\bar I-I_*)^2
+(\ud I-I_*)^2].
 \eess
Thus, by \eqref{2.8}, we have
 \bess
 \frac{{{\rm d}}F}{{{\rm d}}t}\le-\ep[(\overline{V}-V_*)^2+(\underline V-V_*)^2+(\bar I-I_*)^2+(\ud I-I_*)^2].
 \eess
By use of \cite[Lemma 1.1]{Waml18}, we have
  $$\lim_{t\to\yy}[(\overline{V}-V_*)^2+(\underline V-V_*)^2+(\bar I-I_*)^2+(\underline I-I_*)^2]=0,$$
which implies \qq{2.7}. Combining with \qq{2.4} and \qq{2.7}, we have
\bes\dd\lim_{t\to\yy}V(x,t)=V_*,\;\;\dd\lim_{t\to\yy}I(x,t)=I_*\;\;\;\text{uniformly in}\;\;\overline{\Omega}.\nonumber\ees
It follows that
 $$\lim_{t\to\yy}S(x,t)=\lim_{t\to\yy}V(x,t)-\lim_{t\to\yy}I(x,t)=V_*-I_*=S_*
  \;\;\;\text{uniformly in}\;\; \overline{\Omega}.$$
Noticing that $\dd\lim_{t\to\yy}N(x,t)=N_*$ uniformly in $\overline{\Omega}$, we have immediately that
 $$\lim_{t\to\yy}R(x,t)=\lim_{t\to\yy}N(x,t)-\lim_{t\to\yy}V(x,t)=N_*-V_*=R_*\;\;\;\text{uniformly in}\;\; \overline{\Omega}.$$
Using the estimate \eqref{2.1} we can also get
 $$\lim_{t\to\yy}(S, I, R)=(S_*, I_*,R_*)\;\;\;\text{in}\;\; [C^1(\overline{\Omega})]^3.$$
The proof is complete.
\end{proof}

\section{The dynamical properties of the problem \eqref{1.4}}
	\setcounter{equation}{0} {\setlength\arraycolsep{2pt}

In this section we investigate the dynamical properties of \eqref{1.4}. We first prove the well-posedness and positivity of solutions. Then we study the positive equilibrium solutions, including the necessary and sufficient conditions for the existence, and the uniqueness in some special case. Throughout this section we always assume that hypotheses {\bf(P1)} and {\bf(D)} hold.

\subsection{Existence, uniqueness and positivity of the solution of \eqref{1.4}}

\begin{theo}\lbl{t3.1} The problem \eqref{1.4} has a unique global solution $(S(x,t), I(x,t), R(x,t))$ and
 \bes
 0<S(x,t), I(x,t), R(x,t)\le\max\kk\{\dd\max_{\overline{\Omega}}\{S_0(x)+I_0(x)\},\,(a-\beta)/b\rr\}
\;\;\;\text{for} \;x\in\overline{\Omega},\;t>0.
\nonumber\ees
Moreover, for any given $0<\alpha<1$ and $\tau>0$, $S,\,I,\,R\in C^{1+\alpha,\,(1+\alpha)/2}(\overline\Omega\times[\tau, \infty))$, and there exists a constant $C(\tau)$ such that
\begin{eqnarray}
\|S,\,I,\,R\|_{C^{1+\alpha,\,(1+\alpha)/2}(\overline\Omega\times[\tau, \infty))}\leq C(\tau).
\nonumber\end{eqnarray}
\end{theo}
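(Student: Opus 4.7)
The plan is to mirror the proof of Theorem \ref{t2.1} almost verbatim, since problem \eqref{1.4} differs from \eqref{1.3} only in that the homogeneous Neumann condition is replaced by the homogeneous Dirichlet condition, while the reaction terms (including the nonlocal $k\mathcal{P}[I]S$) and the compatibility assumption (D) play the same role as (N) did before. Throughout I would let $f_1,f_2,f_3$ denote the three reaction terms exactly as in the proof of Theorem \ref{t2.1}, and reuse the constants $A=\max_{\overline\Omega}\{S_0+I_0\}$, $B_i=\max_{0\le S,I,R\le A+1}|f_i|$, $B=\max_iB_i$, and $T_0=1/B$.

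First, for local existence I would take as coupled upper-lower solutions on $Q_{T_0}=\Omega\times(0,T_0]$ the pair $(\bar S,\bar I,\bar R)=(A+tB,A+tB,A+tB)$ and $(\ud S,\ud I,\ud R)=(0,0,0)$. The interior differential inequalities and initial inequalities are identical to those written out in the proof of Theorem \ref{t2.1}; the only thing that changes is the boundary condition, and it is automatic that $\bar S,\bar I,\bar R>0=S=I=R$ and $\ud S=\ud I=\ud R=0=S=I=R$ on $\partial\Omega\times(0,T_0]$, so the Dirichlet boundary condition in the definition of coupled upper-lower solutions holds. The functions $f_i$ satisfy on the rectangle $[0,A+1]^3$ the same Lipschitz-type estimates with the nonlocal correction $\mathcal{P}[|I_1-I_2|]$ as in the Neumann case. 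Invoking the argument of \cite[Theorem 4.5]{Wangpara} adapted to the Dirichlet setting yields a unique positive local solution $(S,I,R)$ on $\Omega\times(0,T_0]$; let $T_{\max}$ denote the maximal existence time.

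Second, for the a priori bound I would sum the three equations to find that $N=S+I+R$ satisfies
\begin{equation*}
\left\{\begin{array}{ll}
N_t-d\Delta N=(a-\beta)N-bN^2,&x\in\Omega,\;0<t<T_{\max},\\[1mm]
N=0,&x\in\partial\Omega,\;0<t<T_{\max},\\[1mm]
N(x,0)=S_0(x)+I_0(x)>0,&x\in\Omega.
\end{array}\right.
\end{equation*}
The constant $M=\max\kk\{\max_{\overline\Omega}(S_0+I_0),\,(a-\beta)/b\rr\}$ is a supersolution (indeed $M_t-d\Delta M=0\ge(a-\beta)M-bM^2$ since $M\ge(a-\beta)/b$) dominating $N$ on the parabolic boundary, so the parabolic comparison principle gives $N\le M$ in $\overline\Omega\times[0,T_{\max})$. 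Consequently $0\le S,I,R\le M$, which precludes finite-time blow-up and forces $T_{\max}=+\infty$.

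Third, strict positivity in $\Omega$ follows from the strong maximum principle: the $S$-equation can be written $S_t-d\Delta S+\big(\beta+bN+k\mathcal{P}[I]\big)S=aN\ge0$ with $S_0>0$ in $\Omega$, and the $I$-equation as $I_t-d\Delta I+(\gamma+\beta+bN)I=k\mathcal{P}[I]S\ge0$ with $I_0>0$ in $\Omega$; both zero-order coefficients are bounded by the $L^\infty$ estimate just obtained, so $S,I>0$ in $\Omega\times(0,\infty)$. Then $R_t-d\Delta R+(\beta+bN)R=\gamma I>0$ with $R_0\equiv 0$ and Dirichlet data yields $R>0$ in $\Omega\times(0,\infty)$. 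Finally, the parabolic Schauder estimate \cite[Theorem 2.13]{Wangpara} applied to each equation—now with the bounded, Hölder-continuous right-hand side supplied by $P\in C(\overline\Omega\times\overline\Omega)$, assumption \textbf{(P1)}, and the uniform $L^\infty$ bound—delivers the $C^{1+\alpha,(1+\alpha)/2}$ estimate on $\overline\Omega\times[\tau,\infty)$.

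The only place where any care is required, and which I regard as the main (mild) obstacle, is checking that the iteration scheme of \cite[Theorem 4.5]{Wangpara} continues to work in the Dirichlet setting once the nonlocal term $\mathcal{P}[I]$ is inserted into $f_1,f_2$. This reduces to the observation that $\mathcal{P}$ is a bounded linear operator on $L^\infty(\Omega)$ with norm at most $\max_{x\in\overline\Omega}\int_\Omega P(x,y)\dy<\infty$ (which is finite by \textbf{(P1)} and continuity), so the Lipschitz estimate for $f_1,f_2$ acquires only the extra term $M\,\mathcal{P}[|I_1-I_2|]$, and the standard contraction/monotone iteration between $(\bar S,\bar I,\bar R)$ and $(\ud S,\ud I,\ud R)$ converges exactly as in the local Neumann case.
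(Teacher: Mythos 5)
Your proposal is correct and is exactly the route the paper intends: the paper's own ``proof'' of Theorem \ref{t3.1} consists of a single sentence deferring to the argument of Theorem \ref{t2.1} (the reference to Theorem \ref{t2.2} there is evidently a typo), and your write-up simply carries out that adaptation, correctly identifying that only the boundary inequalities for the coupled upper--lower solutions change while the reaction terms, the $N$-equation comparison, the strong maximum principle for positivity, and the Schauder estimates go through verbatim. The only cosmetic point worth noting is that under the Dirichlet condition strict positivity holds in $\Omega$ rather than on $\overline\Omega$ as the theorem's display literally states; your formulation is the correct reading.
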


\begin{proof}
The proof of this theorem is akin to that of Theorem \ref{t2.2} and we omit the details here.
\end{proof}

\subsection{Nonnegative equilibrium solutions of \eqref{1.4}}

The equilibrium problem of \eqref{1.4} reads as
\bes
 \left\{\begin{array}{lll}
-d\Delta S=aN-\beta S-bNS-k\mathcal{P}[I]S,\;\; &x\in\Omega, \\[1mm]
 -d\Delta I=k\mathcal{P}[I]S-\gamma I-\beta I-b NI,\ &x\in\Omega,\\[1mm]
 -d\Delta R=\gamma I-\beta R-bNR,\ &x\in\Omega,\\[1mm]
  S=I=R=0, \ &x\in\partial\Omega.
 \end{array}\right.
 \lbl{3.1}\ees
In this part, we shall use the theory of topological degree in cones to study the positive solutions of \eqref{3.1}, including the necessary and sufficient conditions for the existence of positive solutions, and the uniqueness of  positive solutions in some special case.

To use the maximum principle and ensure the following expression \qq{3.10} it is required that $P(x,y)=P(y,x)>0$ for all $x,y\in\Omega$. To prove the following Proposition \ref{p3.3}, the Hopf boundary lemma is required, so we need the solution $u$ of the following boundary value problem with nonlocal terms:
 \bes\left\{\begin{array}{lll}
-d\Delta u-a(x)\mathcal{P}[u]+b(x)u=h(x),\;\;&x\in\Omega\;,\\[1mm]
u=0,\;\;&x\in\partial\Omega
\end{array}\right.
 \lbl{3.2}\ees
belongs to $C^2(\Omega)\cap C^1(\overline{\Omega})$ at least when $a,b,h\in C^\alpha(\overline{\Omega})$. To this aim, in this subsection, we always assume
 \bess
 P\in C^\alpha(\overline{\Omega}\times\overline{\Omega}),\;\;\;\text{and}\;\;
 P(x,y)=P(y,x)>0\;\;\text{ for\; all}\;\; x,y\in\Omega.
 \eess
Under this condition, we see that the solution $u$ of \qq{3.2} belongs to $W^2_p(\Omega)$ for all $p>1$ by the $L^p$ theory of elliptic equations. So, $u\in C^{1+\alpha}(\overline{\Omega})$ by the embedding theorem. And in turn, $\mathcal{P}[u]\in C^\alpha(\overline{\Omega})$. Consequently, $u\in C^{2+\alpha}(\overline{\Omega})$ by the Schauder theory of elliptic equations.

On the other hand, if $a(x), h(x)\ge 0$, and $0\le u\in C^1(\overline{\Omega})\cap
 C^2(\Omega)$ satisfies \qq{3.2}. Then we have that
\bess\left\{\begin{array}{lll}
-d\Delta u+(b(x)+M)u=h(x)+Mu+a(x)\mathcal{P}[u]\ge 0,\;\;&x\in\Omega\;,\\[1mm]
u=0,\;\;&x\in\partial\Omega
\end{array}\right.
 \eess
and $b(x)+M>0$ for some positive constant $M$. It is followed, by the strong maximum principle and the Hopf boundary lemma of the elliptic equations with only local terms, that either $u\equiv 0$, or $u>0$ in $\Omega$ and $\frac{\partial u}{\partial \nu}<0$ on $\partial\Omega$. This shows that the Hopf boundary lemma holds for the problem \qq{3.2}.

Apparently, $\boldsymbol{0}=(0,0,0)$ is a trivial solution of \qq{3.1}.

For $r\in L^\infty(\Omega)$, we define the principal eigenvalue of
 \bess\left\{\begin{array}{lll}
-d\Delta\phi+r(x)\phi=\lambda\phi,\;\;&x\in\Omega,\\[1mm]
\phi=0,\;\;&x\in\partial\Omega
\end{array}\right.
 \eess
by $\lambda_1(r)$. When $r\equiv 0$, we define simply $\lambda_1(0)=\lambda_0$. It is well known that
 $$\lambda_1(r+c)=\lambda_1(r)+c$$
when $c$ is a constant.

It is well-known that the problem
 \bes
 \left\{\begin{array}{lll}
-d\Delta N=(a-\beta)N-b N^2,\;\; &x\in\Omega, \\[1mm]
 N=0, \ &x\in\partial\Omega
 \lbl{3.3}\end{array}\right.
 \ees
has a positive solution, defined by $\tilde N$, if and only if $a-\beta>\lambda_0$, i.e., $\lm_1(\beta-a)<0$, and $\tilde N$ is unique, non degenerate and globally asymptotically stable when it exists. Apparently, $(\tilde N, 0, 0)$ is a semi-trivial nonnegative solution of \qq{3.1}.

In Subsection \ref{s3.2.1} we state some abstract results about topological degree in cones and give the estimates of positive solutions of \qq{3.1}. For positive solutions of \qq{3.1}, the necessary and sufficient conditions for the existence, and the uniqueness in some special case are obtained in Subsection \ref{s3.2.2}.

Now we state a well-known conclusion , which will be used constantly later.

\begin{prop}\lbl{p3.1}
The strong maximum principle holds for the operator $-d\Delta +q(x)$ in $\Omega$ with the homogeneous Dirichlet boundary conditions if and only if $\lm_1(q)>0$, and the principal eigenvalue $\lm_1(q)$ is strict increasing in $d$ and $q(x)$.
\end{prop}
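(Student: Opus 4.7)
The plan is to establish both assertions using the variational characterization
\[
\lambda_1(q;d)=\inf\left\{\int_\Omega\!\bigl(d|\nabla\phi|^2+q\phi^2\bigr)\dx \,:\, \phi\in H_0^1(\Omega),\ \int_\Omega\phi^2\dx=1\right\},
\]
together with the classical strong maximum principle and Hopf lemma for the pure Laplacian operator. Throughout, write $\phi_1[q;d]$ for the positive principal eigenfunction, normalized so $\int\phi_1^2\dx=1$.

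For the equivalence with the SMP, I would argue each direction separately. For $(\Rightarrow)$, I proceed by contrapositive: if $\lambda_1(q)\le 0$, then $u:=-\phi_1[q;d]$ satisfies $-d\Delta u+qu=-\lambda_1(q)\phi_1\ge 0$ in $\Omega$ with $u=0$ on $\partial\Omega$ and $u\not\equiv 0$; if the SMP held, it would force $u>0$ in $\Omega$, contradicting $\phi_1>0$. For $(\Leftarrow)$, assume $\lambda_1(q)>0$ and take any $u\in C^2(\Omega)\cap C^1(\overline\Omega)$ with $-d\Delta u+qu\ge 0$ in $\Omega$ and $u\ge 0$ on $\partial\Omega$. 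The standard trick is the substitution $u=\phi_1 w$: a direct computation yields
\[
-d\Delta w-\tfrac{2d}{\phi_1}\nabla\phi_1\!\cdot\!\nabla w+\lambda_1(q)w=\tfrac{1}{\phi_1}\bigl(-d\Delta u+qu\bigr)\ge 0,
\]
where the zero-order coefficient $\lambda_1(q)>0$ and the first-order coefficients are bounded away from $\partial\Omega$. The classical SMP and Hopf lemma for this reduced operator (without a $q$-term issue) apply to $w$, and pulling back yields the desired strong positivity and boundary derivative statement for $u$.

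For strict monotonicity, I would compare minimizers of the Rayleigh quotient. Given $d_1<d_2$ with $q$ fixed, test the infimum for $d_1$ against the minimizer $\phi_2:=\phi_1[q;d_2]$:
\[
\lambda_1(q;d_1)\le d_1\!\int|\nabla\phi_2|^2\dx+\int q\phi_2^2\dx=\lambda_1(q;d_2)-(d_2-d_1)\!\int|\nabla\phi_2|^2\dx.
\]
Since $\phi_2\in H_0^1(\Omega)$ is nontrivial, $\int|\nabla\phi_2|^2\dx>0$, giving the strict inequality $\lambda_1(q;d_1)<\lambda_1(q;d_2)$. Monotonicity in $q$ is analogous: if $q_1\le q_2$ in $\Omega$ with $q_1\not\equiv q_2$, test the infimum for $q_1$ against $\phi_2:=\phi_1[q_2;d]$ to obtain
\[
\lambda_1(q_1;d)\le \lambda_1(q_2;d)+\int(q_1-q_2)\phi_2^2\dx<\lambda_1(q_2;d),
\]
where strictness uses $\phi_2>0$ in $\Omega$ together with $q_1<q_2$ on a set of positive measure.

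The only point requiring genuine care is the $(\Leftarrow)$ direction of the SMP equivalence — the construction of a suitable multiplier/reduced equation whose zero-order coefficient has a favourable sign, so that the classical Hopf boundary lemma can be invoked to conclude $\partial_\nu u<0$ on $\partial\Omega$ whenever $u\not\equiv 0$. All remaining steps are routine consequences of the variational formula, the $L^p$--Schauder regularity (which guarantees that minimizers are classical positive eigenfunctions), and the Krein--Rutman simplicity of the principal eigenvalue.
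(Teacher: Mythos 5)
The paper states this proposition without proof (it is cited as ``well-known''), so there is no in-text argument to compare against; the question is only whether your proposal is sound. Your monotonicity arguments and the $(\Rightarrow)$ direction are correct and standard. The problem is the $(\Leftarrow)$ direction, and it sits exactly at the point you flag as ``requiring genuine care'' but do not actually resolve. Your multiplier $\phi_1$ is the Dirichlet principal eigenfunction of $\Omega$ itself, so it vanishes on $\partial\Omega$. Consequently the drift $\tfrac{2d}{\phi_1}\nabla\phi_1$ in the reduced equation for $w=u/\phi_1$ blows up at the boundary, and, more importantly, $w$ need not extend continuously to $\overline\Omega$; hence you cannot assert that a negative infimum of $w$ is attained at an interior point, which is what the classical strong maximum principle requires. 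If $u$ were negative somewhere, the infimum of $w$ could a priori be approached only along a sequence tending to $\partial\Omega$, and observing that the coefficients are ``bounded away from $\partial\Omega$'' does not address this, because the minimum-location problem is precisely a boundary issue.

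Two standard repairs. (a) Replace $\phi_1$ by the principal eigenfunction of $-d\Delta+q$ on a slightly enlarged domain $\Omega'\supset\supset\Omega$ (extending $q$ boundedly); by continuity of the principal eigenvalue with respect to the domain one still has positivity of the eigenvalue for $\Omega'$ close to $\Omega$, and the new multiplier is bounded above and below by positive constants on $\overline\Omega$, so the reduced operator has bounded drift and nonnegative zero-order term, and the classical SMP and Hopf lemma apply directly. (b) Avoid the multiplier altogether: test $-d\Delta u+qu\ge 0$ against $u^-=\max\{-u,0\}\in H_0^1(\Omega)$ to get $\int_\Omega\bigl(d|\nabla u^-|^2+q(u^-)^2\bigr)\dx\le 0$, which forces $u^-\equiv 0$ when $\lambda_1(q)>0$; once $u\ge 0$, write $-d\Delta u+(q+M)u\ge Mu\ge 0$ with $M$ so large that $q+M>0$ and invoke the classical strong maximum principle and Hopf lemma for that operator. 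Route (b) is the argument the authors themselves use for the nonlocal analogue in Section 3. With either repair your proof is complete.
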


\subsubsection{Abstract results}\lbl{s3.2.1}

To save spaces, we set $u=(S, I, R)=(u_1,u_2,u_3)$ and $f=(f_1, f_2, f_3)$ with
 \bes\left\{\begin{array}{lll}
 f_1(u)&=&a(S+I+R)-\beta S-b(S+I+R)S-k\mathcal{P}[I]S,\\[1mm]
 f_2(u)&=&k\mathcal{P}[I]S-\gamma I-\beta I-b(S+I+R)I,\\[1mm]
 f_3(u)&=&\gamma I-\beta R-b(S+I+R)R.
 \end{array}\right.
 \nonumber\ees
Clearly, $f_i(u)\big|_{u_i=0}\ge 0$ for all $u\ge 0$. The problem \qq{3.1} can be written as
 \bes
 \left\{\begin{array}{lll}
 \mathscr{L}u=f(u),\;\; &x\in\Omega, \\[1mm]
 u=0, \ &x\in\partial\Omega,
 \lbl{3.4}\end{array}\right.
 \ees
where
 $$\mathscr{L}={\rm diag}(-d\Delta,-d\Delta,-d\Delta).$$

For the enough large positive constant $M$, we define
 \bess
  E&=&X^3 \;\;\text{with}\;\; X=\{z\in C^1(\overline{\Omega}):\, z=0\,\;\text{on}\,\,\partial\Omega\},
  \nonumber\\
  W&=&K^3\;\;\text{with}\;\; K=\{z\in X:\, z\geq 0\,\;\text{in}\,\,\Omega\}, \nonumber\\
  F(u)&=&\big(M+\mathscr{L}\big)^{-1}\big(f(u)+Mu\big).
  \eess
Then $u\in W$ is a solution of \qq{3.4} if and only if $F(u)=u$. For $u\in W$, we define
 \bess
 W_u&=&\{v\in E:\, \exists\; r>0\;\; \text{s.t.}\;\;u+tv\in
W,\,\forall\; 0\leq t\leq r\},\\
 S_u&=&\{v\in\overline{W}_u:-v\in\overline{W}_u\}.
 \eess
\begin{theo}{\rm(\cite[Corollary 3.1]{WangYao})}\lbl{t3.2} Assume that $u\in W$ is an isolated fixed point of $F$, and the problem
 \bes
 F'(u)\phi=\phi,\;\;\phi\in \overline{W}_u
 \lbl{3.5}
 \ees
has only the zero solution. Then the following statements hold.
\begin{enumerate}[leftmargin=3em]
\item[{\rm(1)}]\, If the eigenvalue problem
   \bes
 F'(u)\phi=\lm\phi,\;\;\phi\in\overline{W}_u\setminus S_u
 \lbl{3.6}
 \ees
has an eigenvalue $\lm>1$, then ${\rm index}_W(F,u)=0$;

\item[{\rm(2)}]\, If all eigenvalues of the eigenvalue problem
   \bes
 F'(u)\phi-\lm\phi\in S_u,\;\;\phi\in\overline{W}_u\setminus S_u
 \nonumber
 \ees
are less than $1$, then
 $${\rm index}_W(F,u)={\rm index}_E(F,u)=(-1)^{\beta},$$
where $\beta$ is the sum of algebraic multiplicities of all eigenvalues
of $F'(u)$ that are greater than one.
 \end{enumerate}
\end{theo}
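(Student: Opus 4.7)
The plan is to invoke the classical fixed-point index machinery in a wedge, in the spirit of Dancer and Amann. The operator $F=(M+\mathscr{L})^{-1}(f(\cdot)+M(\cdot))$ is completely continuous on $E$, and for $M$ sufficiently large $f(u)+Mu$ is increasing on $W$, so $F(W)\subset W$; together with the isolatedness of $u$ and the hypothesis that \qq{3.5} admits only the trivial solution, these facts guarantee that ${\rm index}_W(F,u)$ is well defined. Both conclusions will then be obtained by suitable homotopy arguments in a small $W$-neighborhood of $u$.

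For part (1), I would take the eigenfunction $\phi_0\in\overline{W}_u\setminus S_u$ corresponding to the eigenvalue $\lambda>1$ and consider the homotopy $H_t(v)=F(v)+t\phi_0$ for $t\in[0,T_0]$, with the aim of showing that for every small $t>0$ the equation $v=H_t(v)$ has no solution in a small $W$-ball around $u$. Setting $w=v-u$ and linearizing gives $(I-F'(u))w=t\phi_0+o(\|w\|)$; projecting onto the $\phi_0$-direction and using $(I-F'(u))\phi_0=(1-\lambda)\phi_0$ with $1-\lambda<0$, while exploiting that $\phi_0\notin S_u$ so that $-\phi_0\notin\overline{W}_u$ (which blocks cancellation inside the wedge), forces $w\notin W_u$ for small $t$. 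Excision together with homotopy invariance of the wedge index then yields ${\rm index}_W(F,u)=0$.

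For part (2), I would deform $F$ to its linearization at $u$ via $G_s(v)=sF(v)+(1-s)\bigl[F(u)+F'(u)(v-u)\bigr]$, $s\in[0,1]$. The hypothesis that every eigenvalue of $F'(u)\phi-\lambda\phi\in S_u$, $\phi\in\overline{W}_u\setminus S_u$, is strictly below $1$ says precisely that no fixed-point branches of $G_s$ escape $u$ in directions leaving $S_u$, while the non-resonance assumption at $\lambda=1$ rules out bifurcation along $S_u$. Homotopy invariance therefore gives ${\rm index}_W(F,u)={\rm index}_W(F'(u),0)$. Since the remaining "wedge-nontrivial" spectrum of $F'(u)$ now lies inside the linear subspace $S_u$, the wedge index agrees with the Leray--Schauder index on $E$, and the classical Leray--Schauder formula gives $(-1)^\beta$, with $\beta$ the total algebraic multiplicity of eigenvalues of $F'(u)$ exceeding $1$.

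The main obstacle will be the wedge bookkeeping: verifying that the two eigenvalue problems \qq{3.5} and \qq{3.6} together with the quotient one in (2) really exhaust the spectrum of $F'(u)$ transverse to, and along, the maximal linear subspace $S_u\subset\overline{W}_u$, and constructing a complementary wedge to $S_u$ on which the homotopy of (2) remains inside $W$ throughout. The nonlocal operator $\mathcal{P}[\cdot]$ hidden inside $f$ is harmless at this abstract level because $F$ remains completely continuous and the cone-index calculus is insensitive to the particular source of compactness.
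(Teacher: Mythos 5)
The paper does not prove Theorem \ref{t3.2} at all: it is quoted verbatim from \cite[Corollary 3.1]{WangYao} and used as a black box, so there is no internal proof to compare your argument against. Your sketch is the standard Dancer--Amann fixed-point-index-in-a-wedge machinery that underlies results of this type, and the overall strategy --- the perturbation $F+t\phi_0$ to force index $0$ in case (1), and a linearization homotopy followed by reduction to the Leray--Schauder index and the formula $(-1)^\beta$ in case (2) --- is the right one.

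That said, the two steps where the real work lies are asserted rather than carried out. In (1), ``projecting onto the $\phi_0$-direction'' is not available as stated: $F'(u)$ is not self-adjoint and $\phi_0$ need not span a complemented invariant subspace transverse to the wedge. The standard argument instead shows directly that $w-F'(u)w=t\phi_0$ admits no solution $w\in\overline{W}_u$ for $t>0$ (using $\lm>1$, $\phi_0\in\overline{W}_u\setminus S_u$, and the invertibility of $I-F'(u)$ on $\overline{W}_u$), and then disposes of the nonlinear remainder by normalizing a putative sequence of fixed points and passing to a limit via compactness of $F'(u)$. In (2), the sentence ``the wedge index agrees with the Leray--Schauder index on $E$'' is precisely the hardest half of the theorem; it is exactly where the hypothesis that the quotient eigenvalue problem $F'(u)\phi-\lm\phi\in S_u$ has no eigenvalue $\ge 1$ is consumed, and it requires an explicit admissible homotopy or a decomposition of $E$ adapted to $S_u\subset\overline{W}_u$, not just the observation that the remaining spectrum ``lies inside $S_u$.'' These gaps are known to be fixable --- the details are in Dancer's original paper and in \cite{WangYao,WangPang} --- but as written your proposal records the statements to be proved rather than their proofs.
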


In order to calculate the topological degree of the operator $I-F$, we first study the following auxiliary problem
 \bes
 \left\{\begin{array}{lll}
 \mathscr{L}u=\tau f(u),\;\; &x\in\Omega, \\[1mm]
 u=0, \ &x\in\partial\Omega
 \end{array}\right.\lbl{3.7}\ees
with $0\leq\tau\leq 1$. It is easy to show that any non-negative solution $u_\tau$ of \eqref{3.7} satisfies
 \bess
 \|u_\tau\|_{L^\infty(\Omega)}\leq (a-\beta)/b.
 \eess
Applying the $L^p$ theory and embedding theorem to \eqref{3.7}, there exists a positive constant $C$ such that any non-negative solution $u_\tau$ of \eqref{3.7} satisfies
 \bes
 \|u_\tau\|_{C^1(\overline{\Omega})}\leq C.
  \nonumber\ees
 Set
 \begin{eqnarray*}
 {\cal O}=\{u\in W:\, \|u\|_{C^1(\overline{\Omega})}<C+1\}.
  \end{eqnarray*}
We can enlarge $M$ such that
 \bess
 \tau f_i(u)+Mu_i\geq 0,\;\;\;\forall\, u\in {\cal O},\;0\leq \tau\leq 1.
 \eess
Define
 $$ F_\tau(u)=(\mathscr{L}+M)^{-1}\left(\tau f(u)+Mu\right),\;\;u\in E,\; 0\leq \tau\leq 1,$$
and $F=F_1$. It is easily to obtain from the homotopy invariance of the topological degree that
 \bes
 {\rm deg}_W(I-F,{\mathcal O})={\rm deg}_W(I-F_0,{\mathcal O})
 ={\rm index}_W(F_0,\boldsymbol{0})=1.
 \lbl{3.8}\ees

\subsubsection{The existence of positive solutions of  \qq{3.1}}\lbl{s3.2.2}

Assume $\lm_1(\beta-a)<0$. Then the non-negative trivial solutions of \eqref{3.1} are $\boldsymbol{0}=(0,0,0)$ and $(\tilde N,0,0)$.
Define $E$ and $W$ as above. It is easy to show that
 \[\overline{W}_{\boldsymbol{0}}=W,\;\;S_{\boldsymbol{0}}=\{\boldsymbol{0}\},
 \;\;\overline{W}_{(S^*,0,0)}=X\times K\times K,\;\;S_{(S^*,0,0)}=X\times\{0\}\times\{0\}.\]
The direct calculation yields, we have
\[F'(u)=(M-d\Delta)^{-1}\big(f'(u)+M\big),\]
where
\bes
f'(u)=\left(\begin{matrix}a-\beta-bN-bS-k\mathcal{P}[I] \;\;& a-bS-k\mathcal{P}[\cdot]S \;\;&a-bS\\[1mm]
 k\mathcal{P}[I]-bI\;\; &k\mathcal{P}[\cdot]S-(\gamma+\beta)-bN-bI \;\;&-bI\\[1mm]-bR \;\;&\gamma-bR \;\;&-\beta-bN-bR\end{matrix}\right).
 \nonumber
 \ees

 \begin{lem}\lbl{l3.1} If $\lm_1(\beta-a)<0$ then ${\rm index}_W(F, \boldsymbol{0})=0$.
\end{lem}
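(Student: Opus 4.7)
The plan is to apply part (1) of Theorem \ref{t3.2} at $u=\boldsymbol{0}$, which requires verifying three things: that $\boldsymbol{0}$ is an isolated fixed point of $F$, that \qq{3.5} has only the zero solution at $u=\boldsymbol{0}$, and that the eigenvalue problem \qq{3.6} admits some $\lm>1$. The key preliminary calculation is
\[f'(\boldsymbol{0})=\kk(\begin{matrix}a-\beta & a & a\\ 0 & -(\gamma+\beta) & 0\\ 0 & \gamma & -\beta\end{matrix}\rr),\]
which is block lower-triangular with respect to the splitting $\phi_1$ versus $(\phi_2,\phi_3)$ and is therefore well-suited to a triangular solvability argument.

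For \qq{3.5}, if $\phi=(\phi_1,\phi_2,\phi_3)\in\overline W_{\boldsymbol 0}=W$ satisfies $F'(\boldsymbol{0})\phi=\phi$, then, after multiplying by $M+\mathscr{L}$, the Dirichlet equation $-d\Delta\phi_2+(\gamma+\beta)\phi_2=0$ holds; because $\lm_1(\gamma+\beta)=\lm_0+\gamma+\beta>0$, Proposition \ref{p3.1} forces $\phi_2\equiv 0$. The third equation then reduces to $-d\Delta\phi_3+\beta\phi_3=0$, and the same maximum principle gives $\phi_3\equiv 0$. The residual equation $-d\Delta\phi_1+(\beta-a)\phi_1=0$ with $\phi_1\ge 0$ and homogeneous Dirichlet data would make $\phi_1$ a nonnegative principal eigenfunction of $-d\Delta+(\beta-a)$ with eigenvalue $0$, hence would force $\lm_1(\beta-a)=0$, contradicting the hypothesis. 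Thus $\phi\equiv\boldsymbol{0}$. Isolation of $\boldsymbol{0}$ as a fixed point of $F$ then follows from the standard rescaling/compactness argument: any sequence $u_n\to\boldsymbol{0}$ of nontrivial fixed points in $W$, renormalised as $v_n=u_n/\|u_n\|_{C^1}$, has a subsequence converging via the compactness of $(M+\mathscr{L})^{-1}$ to some $v_\infty\in W$ with $\|v_\infty\|_{C^1}=1$ and $F'(\boldsymbol{0})v_\infty=v_\infty$, contradicting what was just proven.

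To produce an eigenvalue $\lm>1$, I restrict attention to eigenfunctions of the form $\phi=(\phi_1,0,0)$; any such nonzero $\phi$ lies in $\overline{W}_{\boldsymbol{0}}\setminus S_{\boldsymbol{0}}$ since $S_{\boldsymbol{0}}=\{\boldsymbol{0}\}$. The equation $F'(\boldsymbol{0})\phi=\lm\phi$ collapses to the scalar Dirichlet problem
\[-d\Delta\phi_1+\frac{(\beta-a)+M(\lm-1)}{\lm}\phi_1=0\;\;\;\text{in}\;\;\Omega,\qquad \phi_1\big|_{\partial\Omega}=0,\]
which admits a positive solution iff $\Phi(\lm):=\lm_1\!\kk((\beta-a)/\lm\rr)+M(\lm-1)/\lm=0$, where I used $\lm_1(r+c)=\lm_1(r)+c$ to absorb the constant term. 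The function $\Phi$ is continuous on $(0,\infty)$ by continuity of $\lm_1$ in the potential, with $\Phi(1)=\lm_1(\beta-a)<0$ by assumption and $\Phi(\lm)\to\lm_0+M>0$ as $\lm\to\infty$. The intermediate value theorem supplies some $\lm^*>1$ with $\Phi(\lm^*)=0$, and the corresponding positive eigenfunction $\phi_1^*$ yields the required eigenvector $(\phi_1^*,0,0)$; Theorem \ref{t3.2}(1) then delivers $\mathrm{index}_W(F,\boldsymbol{0})=0$. The one step I expect to require the most care is the isolation claim, as the rescaling argument must be set up cleanly against the compactness coming from $(M+\mathscr{L})^{-1}$; the eigenvalue continuation itself is routine once the scalar reduction is in hand.
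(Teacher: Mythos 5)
Your argument is correct and is exactly the standard index-zero computation that the paper itself defers to (it cites \cite[(3.16)]{WangYao} and omits the details): triangular solvability of $F'(\boldsymbol{0})\phi=\phi$ using $\lambda_1(\gamma+\beta)>0$, $\lambda_1(\beta)>0$ and $\lambda_1(\beta-a)<0$ to rule out nonzero solutions of \eqref{3.5}, followed by producing an eigenvalue $\lambda^*>1$ of \eqref{3.6} on the invariant ray $(\phi_1,0,0)$ via the intermediate value theorem (indeed here $\Phi$ is affine in $1/\lambda$, giving $\lambda^*=(M+a-\beta)/(M+\lambda_0)>1$ explicitly). All three hypotheses of Theorem \ref{t3.2}(1) are verified, so the proposal is complete.
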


The proof of this lemma is the same as that of \cite[(3.16)]{WangYao}, and we omit the details.

For future applications, we will first discuss the eigenvalue problem with non local terms:
  \bes\left\{\begin{array}{lll}
-d\Delta\phi-a(x)\mathcal{P}[\phi]+b(x)\phi=\lambda\phi,\;\;&x\in\Omega\;,\\[1mm]
\phi=0,\;\;&x\in\partial\Omega,
\end{array}\right.
 \lbl{3.9}\ees
where $a,b\in C^\alpha(\overline{\Omega})$ in $\Omega$. By Krein-Rutman theorem, one can easily show that \eqref{3.9} has a unique principal eigenvalue denoted by $\lambda_1(\mathcal{P}, \Omega, a, b)$. Since $P(x,y)$ is symmetric, we have
 \begin{equation}\label{3.10}
\lambda_1(\mathcal{P}, \Omega, a, b)=\inf_{\substack{\phi\in H_0^1(\Omega)\\		 \|\phi\|_2=1}}\left(d\int_\Omega|\nabla\phi|^2{\rm d}x-\iint_{\Omega\times\Omega}a(x)P(x,y)\phi(y)\phi(x)\,{\rm d}y{\rm d}x+
\int_\Omega b(x)\phi^2{\rm d}x\right).\qquad
 \end{equation}

\begin{prop}\label{p3.2} Let $\lambda_1(\mathcal{P}, \Omega, a, b)$ be the principal eigenvalue of \eqref{3.9}.
 \begin{enumerate}
\item [\rm (i)] $\lambda_1(\mathcal{P}, \Omega, a, b)$ is continuous in both $a(x)$ and $b(x)$, and is strictly decreasing and increasing in $a(x)$ and $b(x)$, respectively;
\item [\rm (ii)] $\lambda_1(\mathcal{P}, \Omega, a, b)$ is strictly decreasing with respect to $\Omega$ when $a(x)>0$;
\end{enumerate}
\end{prop}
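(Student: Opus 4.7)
The whole argument I would drive off the Rayleigh characterization \qq{3.10}. Writing
$$R(\phi;a,b)=d\int_\Omega|\nabla\phi|^2\dx-\iint_{\Omega\times\Omega}a(x)P(x,y)\phi(x)\phi(y)\dy\dx+\int_\Omega b(x)\phi^2\dx,$$
so that $\lambda_1(\mathcal{P},\Omega,a,b)=\inf\{R(\phi;a,b):\phi\in H_0^1(\Omega),\,\|\phi\|_2=1\}$ and the infimum is attained by the positive principal eigenfunction supplied by Krein--Rutman, continuity in $(a,b)$ is then immediate: for every admissible $\phi$, Cauchy--Schwarz gives
$$|R(\phi;a_1,b_1)-R(\phi;a_2,b_2)|\le\|P\|_\infty|\Omega|\,\|a_1-a_2\|_\infty+\|b_1-b_2\|_\infty,$$
and this estimate passes to the infimum to yield Lipschitz (hence continuous) dependence in $L^\infty$.

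For the strict monotonicity in (i), I would plug the positive principal eigenfunction of one problem into the Rayleigh quotient of the other and read off the sign. Suppose first $a_1\le a_2$ with $a_1\not\equiv a_2$ and let $\phi_1>0$ be the normalized eigenfunction for $(a_1,b)$. Then
$$\lambda_1(\mathcal{P},\Omega,a_2,b)\le R(\phi_1;a_2,b)=R(\phi_1;a_1,b)-\iint_{\Omega\times\Omega}(a_2-a_1)(x)P(x,y)\phi_1(x)\phi_1(y)\dy\dx<\lambda_1(\mathcal{P},\Omega,a_1,b),$$
the strict sign coming from $P(x,y)>0$, $\phi_1>0$ and $a_2-a_1\geq 0$ with strict inequality on a set of positive measure. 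Strict monotonicity in $b$ is the mirror argument: with $\phi_2>0$ the eigenfunction for the larger $b_2$, the identity $R(\phi_2;a,b_1)=R(\phi_2;a,b_2)-\int_\Omega(b_2-b_1)\phi_2^2\dx$ and $\phi_2>0$ force $\lambda_1(\mathcal{P},\Omega,a,b_1)<\lambda_1(\mathcal{P},\Omega,a,b_2)$.

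For (ii), with $\Omega_1\subsetneq\Omega_2$ and $a(x)>0$, let $\phi_1>0$ be the principal eigenfunction on $\Omega_1$ and $\tilde\phi_1\in H_0^1(\Omega_2)$ its extension by zero. Because $\tilde\phi_1$ vanishes off $\Omega_1$, direct substitution shows $R(\tilde\phi_1;a,b)$ (computed over $\Omega_2$) equals $R(\phi_1;a,b)$ (computed over $\Omega_1$), hence $\lambda_1(\mathcal{P},\Omega_2,a,b)\le\lambda_1(\mathcal{P},\Omega_1,a,b)$. For the strict inequality, suppose equality. Then $\tilde\phi_1$ is a minimizer on $\Omega_2$ and so a weak solution of the eigenvalue equation there; by the elliptic regularity discussed immediately before \qq{3.2}, it is actually a classical solution on $\Omega_2$. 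On the nonempty open set $U=\Omega_2\setminus\overline{\Omega_1}$ we have $\tilde\phi_1\equiv0$, so $\Delta\tilde\phi_1\equiv0$ there, and the equation collapses to $a(x)\mathcal{P}[\tilde\phi_1](x)=0$ on $U$. But for $x\in U$,
$$\mathcal{P}[\tilde\phi_1](x)=\int_{\Omega_1}P(x,y)\phi_1(y)\dy>0$$
because $P>0$ and $\phi_1>0$, contradicting $a(x)>0$.

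The main obstacle is exactly this last step: promoting the weak inequality in (ii) to a strict one. One must know that the zero extension, if it achieved the infimum, would satisfy the equation pointwise so that its vanishing on $U$ could be turned into an algebraic identity, and one needs strict positivity of both $a$ and $P$ to exclude the nonlocal term's vanishing on $U$. These requirements are what tie the proof to the standing hypothesis $P(x,y)=P(y,x)>0$ and force the assumption $a(x)>0$ in (ii); the remaining claims (continuity and monotonicity (i)) are comparatively routine applications of the variational formula.
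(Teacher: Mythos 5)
Your argument is correct as a derivation of the Proposition from the variational formula \eqref{3.10}, and it is considerably more detailed than what the paper provides: the paper's ``proof'' consists of the single remark that (i) and (ii) are standard and a pointer to \cite[Section 2.2.3]{WangPang}. The route you take --- uniform Lipschitz control of the Rayleigh functional for continuity, testing the quotient of one coefficient pair against the positive principal eigenfunction of the other for strict monotonicity, and zero-extension plus the Euler--Lagrange equation on $U=\Omega_2\setminus\overline{\Omega_1}$ for strict domain monotonicity --- is exactly the standard variational argument that such references carry out, so you are not doing anything structurally different from the intended proof; you are simply writing it out. Two of your steps deserve the emphasis you give them: the identity $R(\phi_1;a,b)=\lambda_1$ for the eigenfunction $\phi_1$ of \eqref{3.9} follows by multiplying the equation by $\phi_1$ and integrating (no symmetry needed for that direction), and in (ii) the contradiction $a(x)\mathcal{P}[\tilde\phi_1](x)>0$ on $U$ is precisely where the standing hypotheses $P>0$ and $a>0$ enter.

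One caveat you should be aware of, though it is inherited from the paper rather than introduced by you: the formula \eqref{3.10} characterizes the principal eigenvalue of \eqref{3.9} only when the effective kernel $a(x)P(x,y)$ is symmetric in $(x,y)$; for non-constant $a$ the minimizer of the Rayleigh quotient satisfies the \emph{symmetrized} equation $-d\Delta\phi-\tfrac12\big(a(x)\mathcal{P}[\phi]+\mathcal{P}[a\phi]\big)+b\phi=\lambda\phi$ rather than \eqref{3.9} itself, so the infimum in \eqref{3.10} is a priori the principal eigenvalue of a different operator. Since the paper asserts \eqref{3.10} and applies the Proposition with $a(x)=k\tilde N(x)$ non-constant, your proof stands or falls with the paper's own formula; a version of the argument that avoids this issue entirely would establish monotonicity by Krein--Rutman comparison, testing the eigenvalue problem for one coefficient pair against the positive eigenfunction of the adjoint problem for the other. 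Also note that strictness in (ii) requires $\Omega_2\setminus\overline{\Omega_1}$ to have nonempty interior (or at least positive measure), which is the usual tacit convention for domain monotonicity statements.
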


\begin{proof} The proofs (i) and (ii) are standard and we omit the details. The interested readers can refer to \cite[Section 2.2.3]{WangPang}.
When $\Omega=(l_1,l_2)$ is an interval (the one dimensional case) and $a>0$ is a constant, some properties of $\lambda_1(\mathcal{P}, \Omega, a, b)$ were given in \cite{HWzamp19}.
\end{proof}

Define an operator $(\mathscr{L}_P, D)$ by
 \bess\left\{\begin{array}{ll}
 \mathscr{L}_Pu=-d\Delta u-a(x)\mathcal{P}[\cdot]u+b(x)u,\;\;&x\in\Omega,\\[1mm]
 Du=u,\;\;&x\in\partial\Omega.
 \end{array}\right.\eess

 \begin{defi}\label{d2.1} The operator
$(\mathscr{L}_P, D)$ is said to have the strong maximum principle property if, for any function $u\in C^1(\overline\Omega)\cap C^2(\Omega)$, from
\begin{eqnarray*}
 -d\Delta u-a(x)\mathcal{P}[\cdot]u+b(x)u\geq 0,\;\;u\not\equiv 0\;\;\;\text{in}\;\;\Omega,\;\;\;
 u\geq 0\;\;\;\text{on}\;\;\partial\Omega,
 \end{eqnarray*}
one can conclude $u>0$ in $\Omega$.
\end{defi}

\begin{defi}\label{d2.2} A function $u\in
C^1(\overline\Omega)\cap C^2(\Omega)$ is said to be an upper
solution of the operator $(\mathscr{L}_P, D)$ if
 \bess
 -d\Delta u-a(x)\mathcal{P}[\cdot]u+b(x)u\geq 0\;\;\;\text{in}\;\;\Omega,\;\;\;
 u\geq 0\;\;\;\text{on}\;\;\partial\Omega.\eess
It is called a strict upper solution if it is an upper solution but not a solution.
\end{defi}

\begin{prop}\lbl{p3.3} Assume that $a(x)\ge 0$ in $\Omega$. Then the following statements are equivalent{\rm:}
 \begin{enumerate}[leftmargin=3em]
 \item[{\rm(1)}]\, $(\mathscr{L}_P, D)$ has the strong maximum principle property{\rm;}
\item[{\rm(2)}]\, $(\mathscr{L}_P, D)$ has a strict upper solution which is positive in $\Omega${\rm;}
\item[{\rm(3)}]\, the principal eigenvalue
$\lambda_1(\mathcal{P}, \Omega, a, b)>0$.
 \end{enumerate}
\end{prop}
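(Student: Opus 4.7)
The plan is to prove the cyclic chain $(3) \Rightarrow (2) \Rightarrow (1) \Rightarrow (3)$, with the main work concentrated in $(2) \Rightarrow (1)$. The two shorter directions go through the principal eigenfunction $\phi > 0$ of \eqref{3.9}. For $(3) \Rightarrow (2)$: if $\lambda_1(\mathcal{P},\Omega,a,b) > 0$, then $\phi$ itself is a strict positive upper solution, since $\mathscr{L}_P \phi = \lambda_1 \phi > 0$ in $\Omega$ and $\phi = 0$ on $\partial\Omega$. For $(1) \Rightarrow (3)$: if we had $\lambda_1 \le 0$, then $-\phi$ would satisfy $\mathscr{L}_P(-\phi) = -\lambda_1 \phi \ge 0$ in $\Omega$, $-\phi \equiv 0$ on $\partial\Omega$, and $-\phi \not\equiv 0$; the strong maximum principle would force $-\phi > 0$ in $\Omega$, contradicting $\phi > 0$.

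For $(2) \Rightarrow (1)$, let $\psi > 0$ in $\Omega$ be a strict upper solution and let $u \in C^1(\overline\Omega) \cap C^2(\Omega)$ satisfy $\mathscr{L}_P u \ge 0$ in $\Omega$, $u \ge 0$ on $\partial\Omega$, $u \not\equiv 0$. The strategy is to first establish the weak form $u \ge 0$ in $\Omega$ by a sliding argument with $\psi$, and then upgrade to $u > 0$ via the local strong maximum principle. Set
\[
\eta^* = \inf \bigl\{\, t \ge 0 : u + t \psi \ge 0 \text{ in } \Omega \,\bigr\}.
\]
The $C^1$-regularity of $u$ and $\psi$, together with $u \ge 0$ on $\partial\Omega$ and $\psi > 0$ in $\Omega$, guarantees $\eta^* < \infty$. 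If $\eta^* = 0$ we are done; otherwise $v := u + \eta^* \psi \ge 0$ in $\Omega$, and the definition of the infimum produces a sequence $x_\epsilon \in \Omega$ with $v(x_\epsilon) < \epsilon\, \psi(x_\epsilon)$ converging to some $x_0 \in \overline\Omega$ with $v(x_0) = 0$.

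The crucial observation is that on nonnegative functions the nonlocal term has a definite sign: $\mathcal{P}[v], \mathcal{P}[\psi] \ge 0$, so $\mathscr{L}_P v \ge 0$ upgrades to the local elliptic inequality $-d\Delta v + b v \ge a\mathcal{P}[v] \ge 0$ in $\Omega$, and likewise for $\psi$. The classical strong maximum principle and Hopf lemma for $-d\Delta + b$ are therefore directly applicable to both $v$ and $\psi$. If $x_0 \in \Omega$, they force $v \equiv 0$, so $u \equiv -\eta^*\psi$; substituting back, $\mathscr{L}_P u \ge 0$ forces $\mathscr{L}_P \psi \equiv 0$, while $u \ge 0 \ge -\eta^*\psi$ on $\partial\Omega$ forces $\psi|_{\partial\Omega} \equiv 0$, making $\psi$ a genuine solution of the homogeneous problem and contradicting its strictness. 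If $x_0 \in \partial\Omega$, then $u(x_0) = \psi(x_0) = 0$, both $u|_{\partial\Omega}$ and $\psi|_{\partial\Omega}$ attain their boundary minima at $x_0$, their tangential gradients vanish, and $\nabla v(x_0), \nabla \psi(x_0)$ point purely along $\nu$. Hopf supplies $\partial_\nu v(x_0) < 0$ and $\partial_\nu \psi(x_0) < 0$, so a first-order Taylor expansion at $x_0$ yields
\[
\lim_{\epsilon \to 0} \frac{v(x_\epsilon)}{\psi(x_\epsilon)} = \frac{\partial_\nu v(x_0)}{\partial_\nu \psi(x_0)} > 0,
\]
contradicting $v(x_\epsilon)/\psi(x_\epsilon) < \epsilon \to 0$. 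Having $u \ge 0$, the same reduction $-d\Delta u + b u \ge a\mathcal{P}[u] \ge 0$ combined with the local strong maximum principle upgrades this to $u > 0$ in $\Omega$.

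The main obstacle is the boundary-contact case of the sliding argument: the nonlocal term a priori threatens a clean use of Hopf's lemma, and one must rule out degenerate configurations in which $\eta^*$ is attained only through a limit $x_\epsilon \to \partial\Omega$. The resolution is the observation already highlighted --- nonnegativity of $\mathcal{P}[\cdot]$ on nonnegative inputs lets the nonlocal term be absorbed into the right-hand side, so every maximum-principle step reduces to the classical local operator $-d\Delta + b$ --- combined with the fact that $v$ and $\psi$ share their boundary minimum at $x_0$, which forces their gradients to be purely normal and allows a direct comparison via Hopf and Taylor expansion.
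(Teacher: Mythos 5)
Your architecture --- the cycle $(3)\Rightarrow(2)\Rightarrow(1)\Rightarrow(3)$, the short eigenfunction arguments for the two easy implications, and the observation that $a\ge 0$, $P\ge 0$ let the nonlocal term be absorbed into the right-hand side on nonnegative functions so that the classical strong maximum principle and Hopf lemma for $-d\Delta+b+M$ apply --- is exactly the standard L\'opez-G\'omez/Du route that the paper invokes by citation, and your $(3)\Rightarrow(2)$, $(1)\Rightarrow(3)$ and the interior-contact case of $(2)\Rightarrow(1)$ are correct. However, the boundary-contact case of the sliding argument has a genuine gap. The claimed limit $\lim_{\epsilon\to0} v(x_\epsilon)/\psi(x_\epsilon)=\partial_\nu v(x_0)/\partial_\nu\psi(x_0)$ does not follow from a first-order Taylor expansion: if $x_\epsilon\to x_0$ tangentially, then ${\rm dist}(x_\epsilon,\partial\Omega)$ may be $o(|x_\epsilon-x_0|)$, so the leading terms $\partial_\nu v(x_0)\,\nu\cdot(x_\epsilon-x_0)$ are swallowed by the Taylor remainders and the ratio need not converge to the quotient of normal derivatives. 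Worse, the ratio can genuinely tend to $0$ under exactly the hypotheses you use at that point: with $\Omega=\{y>0\}$ near $x_0=0$, the functions $v=y$ and $\psi=y+x^4$ are nonnegative, vanish at $x_0$, have negative normal derivatives there, yet $v/\psi\to0$ along $x_\epsilon=(t,t^6)$. So the contradiction you want does not follow from the data you invoke.

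The missing ingredient is that $u\ge0$ on $\partial\Omega$ gives $v=u+\eta^*\psi\ge\eta^*\psi$ on \emph{all} of $\partial\Omega$, not merely $v(x_0)=\psi(x_0)=0$. Writing $\pi(x)$ for the nearest boundary point, the Hopf inequality for $v$ at $x_0$, made uniform on a small boundary neighborhood by continuity of $\nabla v$, gives $v(x)\ge v(\pi(x))+c\,{\rm dist}(x,\partial\Omega)\ge \eta^*\psi(\pi(x))+c\,{\rm dist}(x,\partial\Omega)$, while $\psi(x)\le\psi(\pi(x))+C\,{\rm dist}(x,\partial\Omega)$; hence $v/\psi\ge\min\{\eta^*,c/C\}>0$ near $x_0$, which contradicts $v(x_\epsilon)<\epsilon\,\psi(x_\epsilon)$ (equivalently, it shows $v\ge\delta\psi$ and contradicts the minimality of $\eta^*$). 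A second, smaller gap: $\eta^*<\infty$ does not follow from $C^1$-regularity and positivity of $\psi$ alone, since a positive $C^1$ function may vanish superlinearly at $\partial\Omega$, making $-u/\psi$ unbounded; you must first apply the Hopf lemma to $\psi$ itself (legitimate, since $-d\Delta\psi+(b+M)\psi\ge M\psi\ge0$ and $\psi>0$ in $\Omega$) to get $\psi\ge c\,{\rm dist}(\cdot,\partial\Omega)$, against which $u\ge -C\,{\rm dist}(\cdot,\partial\Omega)$ (from $u\in C^1(\overline\Omega)$ and $u\ge0$ on $\partial\Omega$) yields $\eta^*<\infty$. With these two repairs the proof closes; the final upgrade from $u\ge0$ to $u>0$ is fine as written.
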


The proof of Proposition \ref{p3.3} is the same as that of \cite[Theorem 2.8]{WangPang} (see also \cite[Theorem 2.4]{Du06}, \cite[Theorem 2.5]{Lop96}). The details are omitted here.

\begin{lem}\lbl{l3.2} Let $\lm_1(\mathcal{P},\tilde N)$ be the principal eigenvalue of
 \bes\left\{\begin{array}{lll}
-d\Delta\phi-k\tilde N\mathcal{P}[\phi]+(\gamma+\beta+b\tilde N)\phi=\lambda\phi,\;\;&x\in\Omega\;,\\[1mm]
\phi=0,\;\;&x\in\partial\Omega.
\end{array}\right.
 \nonumber\ees
If $\lm_1(\mathcal{P},\tilde N)<0$, then ${\rm index}_W(F, (\tilde N,0,0))=0$.
\end{lem}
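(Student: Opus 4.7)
My plan is to apply Theorem~\ref{t3.2}(1) at the semi-trivial fixed point $u^*=(\tilde N,0,0)$. Since $\tilde N>0$ in $\Omega$ while $I_*=R_*=0$, one has $\overline{W}_{u^*}=X\times K\times K$ and $S_{u^*}=X\times\{0\}\times\{0\}$, so any eigenvector $\phi\in\overline{W}_{u^*}\setminus S_{u^*}$ just needs at least one of its last two components to be nontrivial (and nonnegative). I must produce some eigenvalue $\lambda_*>1$ of $F'(u^*)$ with such an eigenvector, and verify that \qq{3.5} admits only the trivial solution.

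The key observation is that the Jacobian $f'(u^*)$ displayed before Lemma~\ref{l3.1} is block-triangular because $I_*=R_*=0$ annihilates the entries $f'_{2,1}$ and $f'_{2,3}$. Hence the spectral equation $(f'(u^*)+M)\phi=\lambda(\mathscr{L}+M)\phi$ decouples in the middle component to
\[
-\lambda d\Delta\phi_2 - k\tilde N\,\mathcal{P}[\phi_2] + \bigl[\gamma+\beta+b\tilde N+(\lambda-1)M\bigr]\phi_2=0\;\;\text{in}\;\Omega,\qquad \phi_2\big|_{\partial\Omega}=0.
\]
Let $\Lambda(\lambda)$ denote the principal eigenvalue of the operator on the left; then $\Lambda(1)=\lambda_1(\mathcal{P},\tilde N)<0$ by hypothesis. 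Dividing through by $\lambda$ and appealing to Proposition~\ref{p3.2}(i) together with the variational formula \qq{3.10}, $\Lambda$ is continuous on $(0,\yy)$, and the same variational principle yields the lower bound $\Lambda(\lambda)\ge \lambda d\lambda_0+(\lambda-1)M - C$, with $C$ absorbing the uniformly bounded contributions of $-k\tilde N\,\mathcal{P}$ and $\gamma+\beta+b\tilde N$. Therefore $\Lambda(\lambda)\to+\yy$ as $\lambda\to\yy$, and the intermediate value theorem supplies some $\lambda_*>1$ with $\Lambda(\lambda_*)=0$, along with a positive eigenfunction $\phi_2^*>0$ in $\Omega$.

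With $\phi_2^*$ fixed, the $\phi_3$-component of the eigenvalue equation reads $-\lambda_* d\Delta\phi_3 + [\beta+b\tilde N+(\lambda_*-1)M]\phi_3 = \gamma\phi_2^*\ge 0$ under Dirichlet data, which is uniquely solvable with $\phi_3>0$ by the classical maximum principle; the $\phi_1$-component inherits, for $M$ large, a strictly positive zeroth-order coefficient and thus has a unique solution in $X$. The resulting $\phi=(\phi_1,\phi_2^*,\phi_3)\in\overline{W}_{u^*}\setminus S_{u^*}$ is an eigenvector for $\lambda_*>1$. For the non-degeneracy requirement \qq{3.5}, if $F'(u^*)\phi=\phi$ with $\phi\in\overline{W}_{u^*}$, the decoupled $\phi_2$-equation becomes the lemma's eigenvalue problem at eigenvalue $0$; since no nonnegative eigenfunction can correspond to a non-principal eigenvalue while $\lambda_1(\mathcal{P},\tilde N)<0\neq0$, we must have $\phi_2\equiv 0$. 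Then $\phi_3\ge 0$ solves $(-d\Delta+\beta+b\tilde N)\phi_3=0$ with principal eigenvalue $\lambda_0+\beta+b\tilde N>0$, forcing $\phi_3\equiv 0$; and $\phi_1\in X$ solves $-d\Delta\phi_1=(a-\beta-2b\tilde N)\phi_1$. Since $\tilde N$ itself satisfies $-d\Delta\tilde N=(a-\beta-b\tilde N)\tilde N$ with $\tilde N>0$, strict monotonicity of $\lambda_1$ in its potential (Proposition~\ref{p3.1}) gives $\lambda_1(\beta+2b\tilde N-a)>\lambda_1(\beta+b\tilde N-a)=0$, so $\phi_1\equiv 0$. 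Theorem~\ref{t3.2}(1) then delivers ${\rm index}_W(F,(\tilde N,0,0))=0$.

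The main obstacle will be establishing $\Lambda(\lambda)\to+\yy$ as $\lambda\to\yy$ in the presence of the nonlocal term $-k\tilde N\,\mathcal{P}$: the indefinite sign of the bilinear form $\iint P(x,y)\phi(x)\phi(y)\,\dx\dy$ relative to $\|\phi\|_2^2$ blocks a direct pointwise comparison, so one must lean on the variational formula \qq{3.10} together with the uniform bound $\iint P(x,y)\phi(x)\phi(y)\,\dx\dy\le \|P\|_\yy|\Omega|\,\|\phi\|_2^2$ to dominate the nonlocal contribution by the divergent diffusion term $\lambda d\lambda_0$.
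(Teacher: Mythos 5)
Your proposal is correct and follows the same overall strategy as the paper's proof: exploit the triangular structure of $f'(\tilde N,0,0)$ to show \qq{3.5} has only the zero solution (the decoupled $\varphi_2$-equation forces $\varphi_2\equiv 0$, since otherwise $\lm_1(\mathcal{P},\tilde N)=0$; then $\varphi_3\equiv0$ and $\varphi_1\equiv0$ from $\lm_1(\beta+b\tilde N)>0$ and $\lm_1(\beta-a+2b\tilde N)>\lm_1(\beta-a+b\tilde N)=0$), and then build an eigenvector of \qq{3.6} in $\overline{W}_{(\tilde N,0,0)}\setminus S_{(\tilde N,0,0)}$ for some $\lambda_*>1$ by solving the middle component first and back-substituting for $\phi_3$ and $\phi_1$. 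The one genuine divergence is how $\lambda_*$ is produced: the paper passes to the compact positive operator ${\cal A}$ in \qq{3.11} and invokes the equivalence $\lm_1(\mathcal{P},\tilde N)<0\Leftrightarrow r({\cal A})>1$ (citing the analogue of \cite[Theorem 2.25]{WangPang}), taking $\lambda_*=r({\cal A})$; you instead run an intermediate-value argument on the curve $\lambda\mapsto\Lambda(\lambda)$ of principal eigenvalues, using $\Lambda(1)=\lm_1(\mathcal{P},\tilde N)<0$ and $\Lambda(\lambda)\to+\yy$. Your route is more self-contained, at the price of justifying continuity of $\Lambda$ and the divergence estimate against the nonlocal term, which you correctly single out and which does follow from \qq{3.10} together with $\iint a(x)P(x,y)\phi(x)\phi(y)\,\dx\dy\le \|a\|_\yy\|P\|_\yy|\Omega|\,\|\phi\|_2^2$ (with the paper's convention $\lambda_0=\lm_1(0)$ the bound reads $\Lambda(\lambda)\ge\lambda\lambda_0+(\lambda-1)M-C$). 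Two small repairs: for the $\phi_1$-equation the zeroth-order coefficient need not be pointwise positive ``for $M$ large'' since $\lambda_*$ depends on $M$; what is actually needed, and what holds, is positivity of the principal eigenvalue of that operator, which follows from $\lambda_*>1$ and $\lm_1(\beta-a+2b\tilde N)>0$ exactly as in the paper. Likewise, in your non-degeneracy step the potential for $\phi_1$ should be $\beta-a+2b\tilde N$ (not $\beta+2b\tilde N-a$ versus $\beta+b\tilde N-a$ with signs mixed), but the inequality you use is the right one.
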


\begin{proof} Take $u=(\tilde N,0,0)$ in problems \qq{3.5} and \qq{3.6}. Then
\[F'(\tilde N,0, 0)=(M-d\Delta)^{-1}\left(\begin{matrix}
a-\beta-2b\tilde N+M \;\;&a-(b+k\mathcal{P}[\cdot])\tilde N \;\;&a-b\tilde N\\[1mm]
 0\;\; &k\mathcal{P}[\cdot]\tilde N-(\gamma+\beta+b\tilde N)+M\;\; &0\\[1mm]0\;\; &\gamma\;\; &-\beta-b\tilde N+M\end{matrix}\right).\]

We first prove that \qq{3.5} has only the zero solution. In fact, let $(\varphi_1, \varphi_2,\varphi_3)\in \overline{W}_{(\tilde N,0,0)}=X\times K\times K$ be a solution of \qq{3.5}. If $\varphi_2\not\equiv 0$, then
 \bess\left\{\begin{array}{lll}
-d\Delta\varphi_2-k\tilde N\mathcal{P}[\varphi_2]+(\gamma+\beta+b\tilde N)\varphi_2=0,\;\;&x\in\Omega\;,\\[1mm]
\varphi_2=0,\;\;&x\in\partial\Omega.
\end{array}\right.
 \eess
This shows that $\lm_1(\mathcal{P},\tilde N)=0$ as $\varphi_2\ge 0, \not\equiv 0$. This is a contradiction and so $\varphi_2=0$. Furthermore, applying the strong maximum principle to the third equation of \eqref{3.5} we deduce $\varphi_3\equiv 0$. Therefore, $\varphi_1$ satisfies
 \begin{eqnarray*}
\left\{\begin{array}{ll}
-d\Delta \varphi_1+(\beta-a+2b\tilde N)\varphi_1=0,\;\;&x\in\Omega,\\[1mm]
\varphi_1=0,\;\;&x\in\partial\Omega.
 \end{array}\right.
 \end{eqnarray*}
Noticing that $\tilde N$ is the positive solution of \qq{3.3}, we have
  $$\lm_1(\beta-a+b\tilde N)=0,$$
and then $\lm_1(\beta-a+2b\tilde N)>0$. Consequently, $\varphi_1\equiv 0$ by the strong maximum principle.

Now we study the eigenvalue problem
  \begin{eqnarray}
 {\cal A}z:=(M-d\Delta)^{-1}\big[Mz+k\tilde N\mathcal{P}[z]-(\gamma+\beta+b\tilde N)z\big]=\lambda z,\;\; z\in K.
  \label{3.11}\end{eqnarray}
Since $P(x,y)=P(y,x)$ for all $x,y\in\Omega$, similar to the proof of \cite[Theorem 2.25]{WangPang}, we can show that $\lm_1(\mathcal{P},\tilde N)<0\; (>0,\,=0)$ is equivalent to $r({\mathcal A})>1\; (<1,\,=1)$.

Thanks to $\lm_1(\mathcal{P},\tilde N)<0$, we see that $r\left({\cal A}\right)>1$ is the principal  eigenvalue of \qq{3.11}. Let $\phi_2>0$ be the eigenfunction corresponding to $r\left({\cal A}\right)$. Noticing that $\lambda:=r\left({\cal A}\right)>1$,
by the monotonicity of $\lambda_1(q)$, we know
 \[\lambda_1\left(M-\frac{1}{\lambda}(M-\beta-b\tilde N)\right)>\lambda_1(\beta+b\tilde N)>\lambda_1(\beta-a+b\tilde N)=0.\]
As $\phi_2>0$, it follows that the problem
 \bess\left\{\begin{array}{ll}
 -d\Delta\phi_3+\big[M-\frac{1}{\lambda}(M-\beta-b\tilde N)\big]\phi_3=\frac{\gamma\phi_2}{\lambda},\;\;&x\in\Omega,\\
  \phi_3=0,&x\in\partial\Omega
  \end{array}\right.\eess
has a unique positive solution $\phi_3$. Noticing that
 \[\lambda_1\left(M-\frac{1}{\lambda}(a-\beta-2b \tilde N+M)\right)>\lambda_1(\beta-a+2b \tilde N)>0.\]
We can easily see that the problem
 \bess\left\{\begin{array}{ll}
-d\Delta\phi_1+\big[M-\frac{1}{\lambda}(a-\beta-2b \tilde N+M)\big]\phi_1=\frac{1}{\lambda}\big\{\big[a-(b+k\mathcal{P}[\cdot])\tilde N\big]\phi_2+(a-b \tilde N)\phi_3\big\},\;\;&x\in\Omega,\\
  \phi_1=0,&x\in\partial\Omega
  \end{array}\right.\eess
has a unique solution $\phi_1$. Therefore, $r\left({\cal A}\right)>1$ is an eigenvalue of \qq{3.6} and $\phi=(\phi_1, \phi_2, \phi_3)\in\overline{W}_{(\tilde N,0,0)}\setminus S_{(\tilde N,0,0)}$ is the corresponding eigenfunction. It follows that ${\rm index}_W(F, (\tilde N,0,0))=0$ by Theorem \ref{t3.2}.
\end{proof}

\begin{theo}\lbl{t3.3} \begin{enumerate}
\item [\rm (i)] The problem \qq{3.1} has a  positive solution $(S,I,R)$ if and only if
 \bess
\lm_1(\beta-a)<0 \;\;\;and\;\; \lm_1(\mathcal{P},\tilde N)<0.
 \eess
\item [\rm (ii)] Let $N=1$ {\rm(}the one dimension case{\rm)}, $(S_1,I_1,R_1)$ and $(S_2,I_2,R_2)$ be positive solutions of \eqref{3.1}. If $\mathcal{P}[I_1]\ge \mathcal{P}[I_2]$ or $R_1\ge R_2$ in $\Omega$, then $(S_1,I_1,R_1)\equiv (S_2,I_2,R_2)$.
\end{enumerate}
\end{theo}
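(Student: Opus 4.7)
The plan is to prove (i) by a direct argument for necessity and the topological degree framework of Section~\ref{s3.2.1} for sufficiency, and to prove (ii) by first noting that every positive solution of \qq{3.1} satisfies $N=S+I+R\equiv\tilde N$, which reduces the $S$-equation to a \emph{linear} elliptic problem driven by $\mathcal{P}[I]$ and makes strict monotonicity of the nonlocal principal eigenvalue $\lambda_1(\mathcal{P},\Omega,a,b)$ in $a(x)$ (Proposition~\ref{p3.2}(i)) available as a rigidity tool.

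For the necessity in (i), summing the three equations of \qq{3.1} shows that $N$ solves the Dirichlet logistic problem \qq{3.3}, so $N>0$ forces $\lambda_1(\beta-a)<0$ and $N\equiv\tilde N$ by uniqueness. Substituting $N=\tilde N$ into the $I$-equation, $I$ becomes a positive principal eigenfunction of $-d\Delta-kS\mathcal{P}[\cdot]+(\gamma+\beta+b\tilde N)$ with eigenvalue $0$; since $S=\tilde N-I-R<\tilde N$ pointwise, strict monotonicity in the coefficient of $\mathcal{P}[\cdot]$ gives $\lambda_1(\mathcal{P},\tilde N)<0$. For sufficiency, \qq{3.8} yields $\mathrm{deg}_W(I-F,\mathcal{O})=1$, while Lemmas~\ref{l3.1} and \ref{l3.2} give $\mathrm{index}_W(F,\boldsymbol{0})=\mathrm{index}_W(F,(\tilde N,0,0))=0$. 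A brief check confirms these are the only nonnegative fixed points of $F$ in $\mathcal{O}$ with $I\equiv 0$: the $R$-equation then becomes a homogeneous linear Dirichlet problem with positive zeroth-order coefficient, forcing $R\equiv 0$, after which $S$ must solve \qq{3.3}. If no positive solution existed, additivity of the degree would force $1=0+0$, a contradiction.

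For part (ii), both positive solutions satisfy $N_i\equiv\tilde N$, so each $S_i$ solves the linear equation
\[
  -dS_i''+(\beta+b\tilde N+k\mathcal{P}[I_i])S_i=a\tilde N,\qquad S_i|_{\partial\Omega}=0.
\]
Under $\mathcal{P}[I_1]\ge\mathcal{P}[I_2]$, subtraction gives $-d(S_1-S_2)''+(\beta+b\tilde N+k\mathcal{P}[I_1])(S_1-S_2)=-k(\mathcal{P}[I_1]-\mathcal{P}[I_2])S_2\le 0$, and the standard maximum principle (the zeroth-order coefficient is strictly positive) yields $S_1\le S_2$. Since each $I_i$ is a positive principal eigenfunction of $-d\Delta-kS_i\mathcal{P}[\cdot]+(\gamma+\beta+b\tilde N)$ with eigenvalue $0$, strict monotonicity of $\lambda_1$ in $kS_i$ forces $S_1\equiv S_2$; back-substituting into the $S$-equation gives $\mathcal{P}[I_1]\equiv\mathcal{P}[I_2]$. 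Then $I_1$ and $I_2$ are positive principal eigenfunctions of the \emph{same} linear nonlocal operator, hence proportional, and $\mathcal{P}[I_1]=\mathcal{P}[I_2]$ together with $P>0$ forces the proportionality constant to be $1$. The linear $R$-equation finally yields $R_1=R_2$. For the case $R_1\ge R_2$ the plan is to pass to the variables $V=S+I=\tilde N-R$ (so that $V_1\le V_2$) and $I$, in which the system reads $-d\Delta V+(\beta+b\tilde N)V=a\tilde N-\gamma I$ and $-d\Delta I+(\gamma+\beta+b\tilde N)I=k(V-I)\mathcal{P}[I]$, and to extract an ordering of $S_1,S_2$ (or of $\mathcal{P}[I_1],\mathcal{P}[I_2]$) that lets us run the same rigidity chain.

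The main obstacle is this last step. The solution map $I\mapsto R=(-d\Delta+\beta+b\tilde N)^{-1}(\gamma I)$ is order-preserving but not order-reflecting, so the pointwise inequality $R_1\ge R_2$ does not transfer directly to a pointwise inequality on $I_i$, $\mathcal{P}[I_i]$, or $S_i$. This is where the one-dimensionality of $\Omega$ is expected to be essential; I plan to exploit it via a sweeping argument of the type $t_\ast=\sup\{t>0:\,tI_1\le I_2\}$, combined with the simplicity of the nonlocal principal eigenvalue and the boundary behaviour guaranteed by the Hopf lemma (valid for \qq{3.2} by the discussion preceding Proposition~\ref{p3.3}), in order to reduce the case $R_1\ge R_2$ to the already-treated case $\mathcal{P}[I_1]\ge\mathcal{P}[I_2]$.
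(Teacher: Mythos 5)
Part (i) of your proposal is correct and follows the paper's route: the same degree computation for sufficiency (your extra check that $\boldsymbol{0}$ and $(\tilde N,0,0)$ are the only non-positive nonnegative fixed points is a worthwhile addition the paper leaves implicit), and the same necessity argument via $N\equiv\tilde N$ and strict monotonicity of the nonlocal principal eigenvalue. For the first half of (ii), under $\mathcal{P}[I_1]\ge\mathcal{P}[I_2]$, your argument is genuinely different from the paper's and, as far as I can see, correct: viewing the $S$-equation as a \emph{local} linear problem $-d\Delta S_i+(\beta+b\tilde N+k\mathcal{P}[I_i])S_i=a\tilde N$ to get $S_1\le S_2$, then using that $0$ is the principal eigenvalue of $-d\Delta-kS_i\mathcal{P}[\cdot]+(\gamma+\beta+b\tilde N)$ for both $i$ together with strict monotonicity (Proposition \ref{p3.2}(i), which applies since $P>0$ is assumed in this subsection) to force $S_1\equiv S_2$, and then $\mathcal{P}[I_1]\equiv\mathcal{P}[I_2]$ and $I_1\equiv I_2$. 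This is shorter than the paper's L\'opez-G\'omez--Pardo zero-counting argument for the difference system \eqref{3.13}, and notably it nowhere uses one-dimensionality, so it would prove this half of the uniqueness statement in any dimension.

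The genuine gap is the case $R_1\ge R_2$, which you explicitly leave as a plan. The obstacle you identify is real --- $I\mapsto R$ is order-preserving but not order-reflecting, so $R_1\ge R_2$ yields only $V_1\le V_2$ and no pointwise ordering of $I_i$, $S_i$ or $\mathcal{P}[I_i]$ --- but the proposed remedy (a sweeping argument $t_*=\sup\{t:\,tI_1\le I_2\}$ aiming to \emph{reduce} to the case $\mathcal{P}[I_1]\ge\mathcal{P}[I_2]$) is not worked out and is not obviously viable: $I_1$ and $I_2$ satisfy eigenvalue problems with \emph{different} coefficients $kS_1$ and $kS_2$, so the usual touching-point/simplicity argument does not close at $t_*$, and nothing in your sketch exploits $V_1\le V_2$. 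The paper does not reduce to the first case at all; instead it reruns the same zero-counting machinery on the difference $(V,I)=(V_1-V_2,I_1-I_2)$ after rewriting the $I$-difference equation so that the nonlocal term appears with coefficient $k(I_1-V_2)$, which is nonpositive because $I_1<V_1\le V_2$. With that sign, the operator
\[
(\mathscr{L}_2,{\cal I})u=-d\Delta u+k(I_1-V_2)\mathcal{P}[u]+(\gamma+\beta+b\tilde N+k\mathcal{P}[I_2])u
\]
fits the framework of Proposition \ref{p3.3} (its principal eigenvalue on subintervals is positive by comparison with $\lambda_1(\mathscr{L}_3,(0,l))=0$), so the strong maximum principle, the discreteness of the zeros of $V$, and the sign-alternation claim go through verbatim, and the contradiction at $x=l$ finishes the proof. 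This is where the hypothesis $N=1$ is actually used. To complete your proposal you would either need to carry out such a one-dimensional sign-counting argument yourself or find a substitute rigidity mechanism; as written, the second alternative in (ii) is unproved.
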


\begin{proof} (i) Let's prove the sufficiency firstly. We have known that ${\rm deg}_W(I-F,{\mathcal O})=1$ by \qq{3.8}, ${\rm index}_W(F,\boldsymbol{0})=0$ by Lemma \ref{l3.1}, ${\rm index}_W(F, (\tilde N, 0,0))=0$ by Lemma \ref{l3.2}. As
  \bess
  {\rm index}_W(F,\boldsymbol{0})+{\rm index}_W(F, (\tilde N, 0,0))
  \not={\rm deg}_W(I-F,{\mathcal O}),\eess
the problem \eqref{3.1} has at least one positive solution.

In the following we shall prove the necessity. Let $(S,I,R)$ be a positive solution of \qq{3.1}. Set $N=S+I+R$, it is easy to see that $N$ is a positive solution of \qq{3.3}. Thus $\lm_1(\beta-a)<0$, and $N=\tilde N$ by the uniqueness of positive solutions of \qq{3.3}. Furthermore, by the equations of $I$ in \qq{3.1},
 \bess\left\{\begin{array}{lll}
-d\Delta I=k\mathcal{P}[I]S-(\gamma +\beta+b N)I<k\tilde N\mathcal{P}[I]-(\gamma +\beta+b \tilde N)I\;\;&x\in\Omega,\\[2mm]
I=0\;\;&x\in\partial\Omega.
\end{array}\right.
 \eess
On the basis of $I>0$ and $P(x,y)=P(y,x)$ for all $x,y\in\Omega$, we can deduce that $\lm_1(\mathcal{P},\tilde N)<0$.

(ii) Now we prove uniqueness. Without loss of the generality we assume that $\Omega=(0,l)$. Let $(S_1,I_1,R_1)$ and $(S_2,I_2,R_2)$ be positive solutions of \eqref{3.1}. Since both $S_1+I_1+R_1$ and $S_2+I_2+R_2$ satisfy \eqref{3.3}, we have $\tilde N=S_1+I_1+R_1=S_2+I_2+R_2$ by the uniqueness of positive solutions of \eqref{3.3}.

Let $V_i=S_i+I_i$ (where $i=1, 2$), then $(V_i,I_i)$ satisfies
\bes\left\{\begin{array}{lll}\label{3.12}
-d\Delta V_i=a\tilde N-\gamma I_i-(\beta+b\tilde N)V_i,\;\;&x\in(0,l),\\[1mm]
-d\Delta I_i=k\mathcal{P}[I_i]V_i-(\gamma +\beta+b \tilde N+k\mathcal{P}[I_i])I_i,\;\;&x\in(0,l),\\[1mm]
V_i=I_i=0,\;\;&x=0,l.\\[1mm]
\end{array}\right.
\ees

 We first consider the case $\mathcal{P}[I_1]\ge \mathcal{P}[I_2]$.
Let $V=V_1-V_2$, $I=I_1-I_2$,
then $(V,I)$ satisfies
\bes\left\{\begin{array}{lll}\label{3.13}
-d\Delta V+(\beta+b\tilde N)V=-\gamma I,\;\;&x\in(0,l),\\[1mm]
-d\Delta I+k(I_2-V_2)\mathcal{P}[I]+(\gamma+\beta+b\tilde N+k\mathcal{P}[I_1])I=k\mathcal{P}[I_1]V,\;\;&x\in(0,l),\\[1mm]
V=I=0,\;\;&x=0,l
\end{array}\right.
\ees
We shall prove $V\equiv I\equiv 0$. Assume on the contrary that either $V\not\equiv 0$ or $I\not\equiv 0$. Then $V\not\equiv 0$ and $I\not\equiv 0$. In deed, $V\equiv 0$ implies $I\equiv 0$, and $I\equiv 0$ implies $V\equiv 0$ by \eqref{3.13} since $\gamma>0$ and $k\mathcal{P}[I_1]>0$.

For the open interval ${\cal I}\subset(0,l)$, we denote
 \bess
 (\mathscr{L}_1, {\cal I})u&=&-d\Delta u+(\beta+b \tilde N)u,\\[1mm]
(\mathscr{L}_2, {\cal I})u&=&-d\Delta u+k(I_2-V_2)\mathcal{P}[u]+(\gamma+\beta+b\tilde N+k\mathcal{P}[I_1])u,\\[1mm]
(\mathscr{L}_3, {\cal I})u&=&-d\Delta u-kV_2\mathcal{P}[u]+(\gamma+\beta+b\tilde N+k\mathcal{P}[I_2])u,\\[1mm]
{\cal D}(\mathscr{L}_i, {\cal I})&=&H^2({\cal I})\cap H_0^1({\cal I}),\;\; i=1, 2, 3.
\eess
Let $\lm_1(\mathscr{L}_i, {\cal I})$ be the principal eigenvalue of the following eigenvalue problem
\bess\left\{\begin{array}{ll}
	\mathscr{L}_i\phi=\lm\phi,\;\;&x\in {\cal I},\\[1mm]
	\phi=0,\;\;&x\in\partial{\cal I},
\end{array}\right.
\eess
$i=1,2, 3$. Then $\lm_1(\mathscr{L}_3, (0,\,l))=0$ as $I_2>0$ satisfies the second equation of \eqref{3.12}. Thanks to $I_2>0$ and $\mathcal{P}[I_1]\ge \mathcal{P}[I_2]$, it follows from Proposition \ref{p3.2} (i) that
\bess
\lm_1(\mathscr{L}_2, (0,\,l))>\lm_1(\mathscr{L}_3, (0,\,l))=0,
\eess
and from the monotonicity of $\lm_1(\mathscr{L}_1,(0,\,l))$ that
\bess
\lm_1(\mathscr{L}_1,(0,\,l))>\lm_1(\beta-a+b \tilde N)=0,\;\;\;\text{and}\;\; \lm_1(\mathscr{L}_1,{\cal I})>\lm_1(\mathscr{L}_1,(0,\,l))\;\;\,\text{when}\;\;{\cal I}\varsubsetneqq(0,\,l).\eess
By using Proposition \ref{p3.2} (i) and (ii) in sequence, it can be concluded that
\bess
\lm_1(\mathscr{L}_2, {\cal I})>\lm_1(\mathscr{L}_3, {\cal I})\ge\lm_1(\mathscr{L}_3, (0,\,l))=0.
\eess
Therefore,  when ${\cal I}\subset(0,\,l)$, the strong maximum principle holds for $(\mathscr{L}_1, {\cal I})$ by Proposition \ref{p3.1}, and holds for $(\mathscr{L}_2, {\cal I})$ by Proposition \ref{p3.3}.

We first show that $V$ must change signs. If $V\le 0$, then $I<0$ in $(0,l)$ by the strong maximum principle as $V\not\equiv 0$. Then by the first equation of \eqref{3.13}
the strong maximum principle shows that $V>0$ in $(0,l)$. This contradicts with the assumption that $V\le 0$. Similarly, $V\ge 0$ is also impossible. So, $V$ must change signs.

Take advantage of the above facts, similar to the proof of \cite[Lemmas 3.3 and 3.4]{LP}, we can prove the following claims (see also the proof of \cite[Theorem 3.5]{WangYao}).

{\bf Claim 1}: The set of zeros of $V$ is discrete in $[0,\,l]$.

{\bf Claim 2}: Let $[x_1,x_2]\subset[0,l]$ with $x_1<x_2$, and  $ V(x_1)=V(x_2)=0$. If $V>0$ in $(x_1, x_2)$ and $I(x_1)\ge 0$, then $ I(x_2)<0$; If $V<0$ in $(x_1, x_2)$ and $I(x_1)\le 0$, then $ I(x_2)>0$.

According to the above Claim 1 we see that the set of zeros of $V$ is discrete in $[0,\,l]$. Let $0=x_0<x_1<\cdots<x_n=l$ be such a set. Then $\lm_1(\mathscr{L}_1, (x_{j-1}, x_j))>0$, $\lm_1(\mathscr{L}_2, (x_{j-1}, x_j))>0$ for $j=1,\cdots, n$.
We shall follow the idea of \cite[Theorem 3.1]{LP} to derive a contradiction.

Without loss of generality we may think of that $V(x)>0$ in $(x_0,x_1)$. Then $I(x_1)<0$ by Claim 2. We assert that $V(x)<0$ in $(x_1, x_2)$. In fact, if  $V(x)>0$ in $(x_1, x_2)$, then there exists $\varepsilon>0$ such that $I(x)<0$ in $(x_1-\varepsilon, x_1+\varepsilon)$, and $ V(x_1\pm\varepsilon)>0$. Applying the strong maximum principle to the first equation of \eqref{3.13} in $(x_1-\varepsilon, x_1+\varepsilon)$, we can derive that $V(x)>0$ in $(x_1-\varepsilon, x_1+\varepsilon)$. This is a contradiction with the fact that $V(x_1)=0$.

We have known that $V(x)<0$ in $(x_1, x_2)$, $V(x_1)=V(x_2)=0$ and $ I(x_1)<0$, it follows that $I(x_2)>0$ by Claim 2. By repeating this process, we can ultimately deduce that either $I(l)>0$ or $I(l)<0$. This contradiction implies that $V\equiv I\equiv 0$. As a result, if $N=1$, then the positive solution of \eqref{3.1} is unique when it exists.

Now we consider the case $R_1\ge R_2$ in $\Omega$. In this case, $V_1\le V_2$ as $V_1+R_1=V_2+R_2$, which implies $I_1<V_1\le V_2$.
Writing the second equation of \eqref{3.13} as
\bess
-d\Delta I+k(I_1-V_2)\mathcal{P}[I]+(\gamma+\beta+b\tilde N+k\mathcal{P}[I_2])I=k\mathcal{P}[I_1]V,\eess
and define the operator $(\mathscr{L}_2, {\cal I})$ as
\bess
(\mathscr{L}_2, {\cal I})u=-d\Delta u+k(I_1-V_2)\mathcal{P}[u]+(\gamma+\beta+b\tilde N+k\mathcal{P}[I_2])u.
\eess
We have known $I_1<V_2$. Similar to the above, the strong maximum principle holds for $(\mathscr{L}_2, {\cal I})$ when ${\cal I}\subset(0,\,l)$. The remaining proof is the same as above and we omit the details.
\end{proof}

\section{The dynamical properties of the problem \eqref{1.5}}
\setcounter{equation}{0} {\setlength\arraycolsep{2pt}

Throughout this section we always assume that hypotheses {\bf(P2)} and {\bf(F)} hold.

\subsection{The well-posedness of the solution of the problem \eqref{1.5}}

\begin{theo}\lbl{t4.1} The problem \eqref{1.5} has an unique global solution $(S,I,R;g,h)$ and for any $0<\alpha<1$, there exists a constant $C$ such that
\bess
&0<S\le C\;\;\text{in}\;\;\R_+\times\R,\;\;0<I,R\le C\;\;\text{in}\;\;D_\infty,\;\;0<-g', h'\le
C\;\;\text{in}\;\;\R_+,&\\
&S+I+R\le \max\kk\{\|S_0\|_{L^{\yy}(\mathbb{R})}+\|I_0\|_{C([-h_0,\,h_0])},
(a-\beta)/b\rr\}=:A\;\;in\;\,[0,\infty)\times\mathbb{R},&
  \eess
and
 \bes
\|I(t,\cdot),R(t,\cdot)\|_{C^1([g(t),\,h(t)])}\leq C\ \ \forall\,t\ge1,\quad \|g', h'\|_{C^{{\frac{\alpha}{2}}}([1,\yy))}\leq C,\lbl{4.1}
 \ees
Moreover, for any $0<T<\infty$ and $0<\alpha<1$,we also have
 \[(S,I,R;g,h)\in C_{\rm loc}^{(1+\alpha)/2,\,1+\alpha}([0,T]\times\R)\times [C^{(1+\alpha)/2,\,1+\alpha}_{\rm loc}(\bar D_T)]^2\times [C^{1+\alpha/2}_{\rm loc}([0,T])]^2,\]
where $D_T=\{(t,x):\, 0\le t\le T, \, g(t)<x<h(t)\}$.
\end{theo}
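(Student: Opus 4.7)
The plan is to follow the standard four-stage program for free boundary problems with local diffusion, as developed in Du--Lin and adapted by Li--Ni--Wang for problem \eqref{1.2}, while carefully tracking the extra dependence on the nonlocal term $\mathcal{P}[I]$. The novelty throughout is that $\mathcal{P}[I](t,x) = \int_{g(t)}^{h(t)} P(x,y) I(t,y)\,\dy$ is both global in space and integrated over the moving interval.

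First, for local existence and uniqueness, I would straighten the moving interval $(g(t), h(t))$ onto the fixed interval $(-h_0, h_0)$ via a smooth diffeomorphism of the form $y = h_0 (2x - g - h)/(h - g)$, turning the $I$- and $R$-equations into a quasilinear parabolic system on a fixed domain with coefficients that depend smoothly on $(g, h, g', h')$, while the $S$-equation on $\R$ remains unchanged. A contraction mapping argument in the space $\Sigma_{T_0} = \{(g,h) : g(0) = -h_0,\ h(0) = h_0,\ \|g',h'\|_{C^{\alpha/2}([0,T_0])} \le K\}$, combined with standard Schauder / $L^p$ theory for the linearized $(S, I, R)$-equations, gives a unique solution on some $[0, T_0]$. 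Hypothesis \textbf{(P2)} (boundedness and local Lipschitz regularity of $P$) ensures that the operator $\mathcal{P}$, when viewed as acting on the extended-by-zero $I$, is Lipschitz from the working space into itself, so the contraction estimate survives the nonlocal perturbation.

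Second, to extend the local solution globally I would derive the $L^\infty$ bound by summing the three equations: $N := S + I + R$ satisfies $N_t - d N_{xx} = (a - \beta - bN) N$ on $\{g(t) \le x \le h(t)\}$ (and the same equation for $S$ on $\R$), whereupon comparison with the ODE $n' = (a-\beta-bn)n$ and the scalar maximum principle yields $S + I + R \le A$ everywhere. Nonnegativity of $S,I,R$ follows from the fact that each equation has nonnegative right-hand side when the variable itself vanishes; strict positivity comes from the strong maximum principle (note that the sign of $-k\mathcal{P}[I]S$ in the $S$-equation is not an obstruction, because it plays the role of a bounded negative coefficient on $S$). To bound $h'$ and $-g'$, I would introduce the classical wall barrier $w(t,x) = A\bigl[2M(h(t)-x) - M^2(h(t)-x)^2\bigr]$ on the strip $\{h(t) - M^{-1} \le x \le h(t)\}$ for $M$ sufficiently large (depending on $A, d, k, \|P\|_\infty, b, \gamma, \beta$) so that $w \ge I$ there; the Hopf lemma at $x = h(t)$ then gives $|I_x(t, h(t))| \le 2AM$, hence $0 < h'(t) \le 2\mu A M$, and symmetrically for $-g'(t)$. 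Continuation of the local solution by these uniform bounds yields $T_{\max} = \infty$.

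Finally, the uniform Hölder and $C^1$ regularity estimate \eqref{4.1} follows by applying interior and boundary parabolic Schauder/$L^p$ estimates on unit time windows $[t, t+1]$ after local boundary straightening near $x = g(t)$ and $x = h(t)$, together with the uniform bounds on $g', h'$ established above. The main technical obstacle is precisely the nonlocal source $k \mathcal{P}[I] S$: it is global in space, and its integration domain is itself moving. The key observation that defuses this is that $\mathcal{P}[I](t,\cdot)$ is uniformly bounded by $\|I\|_\infty \|P\|_\infty (h(t) - g(t))$ and, by the local Lipschitz continuity in \textbf{(P2)}, Hölder continuous in $x$ uniformly in $t$; thus $\mathcal{P}[I]$ enters the linearized equations as a benign $L^\infty$ (indeed $C^\alpha$) coefficient, and the arguments from the local model \eqref{1.2} in Li--Ni--Wang and the nonlocal-diffusion version in Zhang--Wang transfer with only minor modifications.
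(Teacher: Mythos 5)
Your proposal follows the same program that the paper itself invokes only by reference --- the paper omits the proof of Theorem \ref{t4.1} entirely, stating that it is akin to \cite[Theorem 2.1]{ChenW}, whose argument is exactly the boundary-straightening/contraction-mapping, $N$-comparison, wall-barrier, and windowed Schauder--$L^p$ scheme you describe --- so the approach matches. One small correction: for the uniform-in-time estimates \eqref{4.1} you should bound $\mathcal{P}[I]$ by $\|I\|_{\infty}$ using the normalization $\int_{\R}P(x,y)\,{\rm d}y=\int_{\R}P(y,x)\,{\rm d}y=1$ from {\bf(P2)} together with the symmetry of $P$, rather than by $\|I\|_{\infty}\|P\|_{\infty}(h(t)-g(t))$, which degenerates as $h(t)-g(t)\to\infty$ in the spreading case and would not yield estimates independent of $t$.
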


The proof of Theorem 4.1 is akin to that of \cite[Theorem 2.1]{ChenW}, and we omit the details. As we can see from Theorem \ref{t4.1}, the length $h(t)-g(t)$ of habitat of the infected population $I$ is increasing in $t>0$, which obviously implies that either $\dd\lim_{t\to\yy}(h(t)-g(t))<\yy$ or $\dd\lim_{t\to\yy}(h(t)-g(t))=\yy$. We say that spreading occurs if $\dd\lim_{t\to\yy}(h(t)-g(t))=\yy$, and vanishing occurs if $\dd\lim_{t\to\yy}(h(t)-g(t))<\yy$. For the sake of convenience, we define $\dd g_{\yy}=\lim_{t\to\yy}g(t)$ and $\dd h_{\yy}=\lim_{t\to\yy}h(t)$.

\subsection{Longtime behaviors and criteria for spreading and vanishing}

We first discuss the longtime behaviors of solution of \eqref{1.5}.

\begin{theo}\lbl{t4.2} Let $(S, I, R; g, h)$ be the solution of \eqref{1.5}. If $h_\infty-g_\infty<\infty$, then
 \bes
\lim_{t\to\infty} \max_{[g(t),h(t)]}I(t,\cdot)=0, \;\;\lim_{t\to\infty} \max_{[g(t),h(t)]}R(t,\cdot)=0,\lbl{4.2}\ees
and
 \bes
 \lim_{t\to\infty} S(t,\cdot)=(a-\beta)/b\;\;\text{locally uniformly in } \; \mathbb{R}.\lbl{4.3}\ees
\end{theo}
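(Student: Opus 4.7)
My plan is to establish the three limits in sequence---$I\to 0$, then $R\to 0$, then $S\to(a-\beta)/b$ locally uniformly---with each step feeding the next.

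\textbf{Step 1 (decay of $I$).} Since $h$ is nondecreasing, $g$ is nonincreasing and $h_\infty-g_\infty<\infty$, the Stefan conditions give $\int_0^\infty(|I_x(t,h(t))|+|I_x(t,g(t))|)\,dt<\infty$; combined with the uniform H\"older bound $\|g',h'\|_{C^{\alpha/2}([1,\infty))}\le C$ from \eqref{4.1}, this forces $g'(t),h'(t)\to 0$ as $t\to\infty$. I would argue by contradiction: suppose along some $t_n\to\infty$ there are $x_n\in[g(t_n),h(t_n)]$ with $I(t_n,x_n)\ge\delta>0$ and $x_n\to x^*\in[g_\infty,h_\infty]$. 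The time-translates $(S_n,I_n,R_n)(t,x):=(S,I,R)(t+t_n,x)$, $g_n(t):=g(t+t_n)$, $h_n(t):=h(t+t_n)$, together with the regularity \eqref{4.1} and Arzel\`a--Ascoli, admit along a subsequence a limit $(S_\infty,I_\infty,R_\infty)$ with $g_n\to g_\infty$, $h_n\to h_\infty$ locally uniformly in $t$. The limit $I_\infty\ge 0$ solves the natural nonlocal parabolic problem on $\mathbb{R}\times(g_\infty,h_\infty)$ with zero Dirichlet data, $I_\infty(0,x^*)\ge\delta>0$, and in addition $(I_\infty)_x(t,g_\infty)=(I_\infty)_x(t,h_\infty)=0$ for all $t$, since $I_{n,x}(t,g_n(t))=-g_n'(t)/\mu\to 0$ and likewise at $h$. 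This contradicts the Hopf boundary lemma for the nonlocal operator established earlier in the paper, so $\max_{[g(t),h(t)]}I(t,\cdot)\to 0$.

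\textbf{Step 2 (decay of $R$).} The equation $R_t-dR_{xx}=\gamma I-(\beta+bN)R$ on $(g(t),h(t))$ with Dirichlet zero data has a vanishing source $\gamma I$ and a dissipative term $\ge\beta R$. Comparing $R$ with the spatially constant ODE super-solution $\bar R'(t)=\gamma\|I(t,\cdot)\|_\infty-\beta\bar R$, $\bar R(0)\ge\|R_0\|_\infty$, yields $0\le R\le\bar R\to 0$.

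\textbf{Step 3 (convergence of $S$).} Rewrite the $S$-equation as
\begin{equation*}
S_t-dS_{xx}=(a-\beta)S-bS^2+\mathcal{E}(t,x),\qquad \mathcal{E}:=a(I+R)-b(I+R)S-k\mathcal{P}[I]S,
\end{equation*}
and note $\|\mathcal{P}[I](t,\cdot)\|_\infty\le\|P\|_\infty(h_\infty-g_\infty)\|I(t,\cdot)\|_\infty$, so Steps 1--2 give $\|\mathcal{E}(t,\cdot)\|_\infty\to 0$. Fix $\varepsilon>0$ small and $T_\varepsilon$ with $\|\mathcal{E}\|_\infty<\varepsilon$ for $t\ge T_\varepsilon$. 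For the upper bound, compare $S$ with the spatially constant super-solution solving $\overline{S}'=(a-\beta)\overline{S}-b\overline{S}^2+\varepsilon$, $\overline{S}(T_\varepsilon)=A$, using the maximum principle for bounded solutions on $\mathbb{R}$. For the lower bound, fix a large $L$ and compare $S$ on $(-L,L)\times[T_\varepsilon,\infty)$ with the Dirichlet solution of $\underline{S}_t-d\underline{S}_{xx}=(a-\beta)\underline{S}-b\underline{S}^2-\varepsilon$, $\underline{S}|_{\pm L}=0$, with initial data at $t=T_\varepsilon$ small and positive but dominated by $\inf_{[-L,L]}S(T_\varepsilon,\cdot)>0$ (positive by the strong maximum principle applied to $S$). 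For small $\varepsilon$ and large $L$, both bracketing solutions converge as $t\to\infty$ to values arbitrarily close to $(a-\beta)/b$, yielding the locally uniform limit.

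The main obstacle is Step 1: rigorously passing to the time-translated limit problem on $(g_\infty,h_\infty)$ and invoking the nonlocal Hopf boundary lemma in the free-boundary setting. The uniform regularity up to the free boundary in \eqref{4.1}, together with the nonlocal Hopf machinery proved earlier in the paper, are exactly the tools that make this contradiction argument work; Steps 2 and 3 are then comparatively routine consequences via scalar comparison.
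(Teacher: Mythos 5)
Your proposal is correct and follows essentially the same route as the paper: there too, the decay of $I$ is deduced from $h_\infty-g_\infty<\infty$ via a standard free-boundary lemma (\cite[Lemma 8.7]{Wangpara}, \cite[Lemma 4.1]{wzjde18}) whose proof is precisely your time-translation/Hopf contradiction, and $R$ and $S$ are then handled by scalar comparison with logistic-type problems exactly as you do. One small correction to your framing of the ``main obstacle'': since the nonlocal term $k\mathcal{P}[I]S$ is nonnegative, $I$ satisfies the purely local inequality $I_t-dI_{xx}\ge -(a+\gamma+b\varepsilon)I$ after bounding $N$, so the classical parabolic Hopf lemma suffices in Step 1 --- the nonlocal Hopf machinery (which the paper develops only for the elliptic problem \eqref{3.2}) is not needed here.
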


\begin{proof} \textbf{Step 1.} Recalling $N=S+I+R$, we have
  \bes\left\{\begin{array}{ll}\lbl{4.4}
 N_t-d N_{xx}=aN-\beta N-bN^2,\;\;t>0,\; x\in\mathbb{R}\setminus\{h(t),g(t)\},\\[1mm]
N_x(t,g(t)-0)\le N_x(t,g(t)+0),\; \; N_x(t,h(t)-0)\le N_x(t,h(t)+0),\;\;t>0,\\ [1mm]
 N(0,x)=S_0(x)+I_0(x), \;\; x\in\mathbb{R}.
 \end{array}\right.\ees
Thus $N$ can be considered as a weak lower solution of the problem
 \bess\left\{\begin{array}{ll}
 \bar N_t-d\bar N_{xx}=a\bar N-\beta \bar N-b\bar N^2,\;\;&t>0,\; x\in\mathbb{R},\\[1mm]
 \bar N(0,x)=S_0(x)+I_0(x), \;\; &x\in\mathbb{R}.
 \end{array}\right.\eess
As a result, $N(t,x)\le \bar N(t,x)$.

Let $q(t)$ be the unique solution of the initial value problem
 \bess\left\{\begin{array}{ll}
	 q'(t)=(a-\beta-b q)q,\;\;t>0,\\[1mm]
	 q(0)=\|S_0\|_{L^\infty(\mathbb{R})}+\|I_0\|_{C([-h_0,h_0])
}>0.
 \end{array}\right.
 \eess
Then $\lim\limits_{t\to\infty}q(t)=(a-\beta)/b$. The comparison principle yields $\bar N(t,x)\le q(t)$, further we have $N(t,x)\le q(t)$ for $t\ge 0$ and $x\in\mathbb{R}$.
For the given $\varepsilon>0$, there exists $T\gg 1$ such that
\bes
N(t,x)\le(a-\beta)/b+\varepsilon,\;\;\forall\;t\ge T,\; x\in\mathbb{R}.\lbl{4.5}
\ees
Thus $I$ satisfies
 \[I_t-d I_{xx}\ge -(\gamma+\beta+bN)I\ge -(a+\gamma+b\varepsilon)I,\;\;\forall\;t>T,\;x\in(g(t),h(t)),\]
and the first estimate in \eqref{4.1} holds. Using \cite[Lemma 8.7]{Wangpara} or \cite[Lemma 4.1]{wzjde18} we have
 $$\lim_{t\to\infty} \max_{[g(t),h(t)]}I(t,x)=0, .$$
Thanks to the equation of $R$, it is easy to show the second limit in \eqref{4.2}.

\textbf{Step 2.} We have obtained in Step 1 that
\bes
\dd\limsup _{t\to\infty } S(t,x)\le\limsup_{t\to\infty }N(t,x)\le(a-\beta)/b\;\;\text{ uniformly in } \; \mathbb{R}.\lbl{4.6}
\ees
Since $I(t,x)=R(t,x)=0$ for $x\notin(g(t),h(t))$. Using the fact \eqref{4.2} we see that for the small $\sigma>0$, there exists $T_\sigma\gg1$, such that
\[-\sigma<I(t,x),R(t,x)<\sigma,\;\;\forall\;t>T_\sigma,\;x\in\mathbb{R}.\]
For any given $\varepsilon>0$ and $L>0$, let $l_\varepsilon$ be determined by \cite[Proposition B.1]{WZjdde17}. Then S satisfies
\bess\left\{\begin{array}{ll}
S_t-d S_{xx} \ge (a-\beta-bS-2b\sigma-k\sigma)S,\;\;
&t>T_\sigma,\;-l_\varepsilon<x<l_\varepsilon,\\[1mm]
S(t,\pm l_\varepsilon)>0,\;\;&t\ge T_\sigma,\\[1mm]
S(T_\sigma,x)>0,\;\;
&-l_\varepsilon\le x\le l_\varepsilon.
\end{array}\right.
\eess
By \cite[Proposition B.1]{WZjdde17}
\bess
\liminf_{t\to\infty}S(t,x)\ge\frac{a-\beta-(2b+k)\sigma}{b}-
\varepsilon\;\;\;\text{uniformly in}\;\; [L,L].
\eess
By the arbitrariness of $\varepsilon$, $\sigma$ and $L$, we have
 \bess
\liminf_{t\to\infty}S(t,x)\ge(a-\beta)/b\;\;\;\text{locally uniformly in} \;\;\mathbb{R}.
 \eess
This combined with \eqref{4.6} yields \qq{4.3}.
The proof is complete. \end{proof}

In order to make sure that the spreading and vanishing of the disease, we study the eigenvalue problem:
\begin{equation}\lbl{4.7}
\begin{cases}
-d \phi''-c_1{\dd\int_{\mathcal I}P(x, y)\phi(y){\rm d}y}+c_2\phi=\lambda\phi, &x\in\mathcal{I},\\
\phi(x)=0, &x\in\partial\mathcal{I},
\end{cases}
\end{equation}
where $d, c_1$ and $c_2$ are positive constants, and $\mathcal{I}\subset \mathbb{R}$ is an open interval. By Krein-Rutman theorem, we effortlessly prove that \eqref{4.7} has a unique principal eigenvalue, defined by
$\lambda_1(c_1,c_2, \mathcal{I})$. By the variational method, $\lambda_1(c_1,c_2, \mathcal{I})$ can be expressed by
 \begin{equation*}
   \lambda_1(c_1,c_2, \mathcal{I})=\inf_{\substack{\phi\in h_0^1(\mathcal{I})\\
\|\phi\|_2=1}}\left\{d\int_\mathcal{I}(\phi')^2+c_2-c_1\iint_{\mathcal{I}
 \times\mathcal{I}}P(x, y)\phi(y)\phi(x)\,{\rm d}y{\rm d}x\right\}.
   \end{equation*}
Some properties of $\lambda_1(c_1,c_2, \mathcal{I})$ were given in \cite[Proposition B.1]{HWzamp19}. Set
    \bess
  \mathcal{R}_{02}(c_1,c_2, \mathcal{I})=\sup_{\substack{\phi\in h_0^1(\mathcal{I})\\		 \|\phi\|_2=1}}\left\{\frac{c_1\iint_{\mathcal{I}\times\mathcal{I}}P(x,y)\phi(y)\phi(x)\,{\rm d}y{\rm d}x}{d\int_\mathcal{I}(\phi')^2+c_2} \right\}.
   \eess
Then $\lambda_1( c,\mathcal{I})>\,(=,\,<)\,0$ if and only if $\mathcal{R}_{02}(c_1,c_2, \mathcal{I})<\,(=,\,>)\,1$.

\begin{theo}\lbl{t4.3}\, Let $(S, I, R; g, h)$ be the solution of \eqref{1.5}. If $\mathcal{R}_{02}(\frac{k(a-\beta)}{b},a+\gamma,\,(-h_0,h_0))\ge 1$, then
\bes
h_\infty=-g_\infty =\infty \;\;and \;\;\dd\lim_{t\to\infty}\|I({\cdot},t)\|_{ C([g(t),h(t)])}>0.\nonumber
\ees
\end{theo}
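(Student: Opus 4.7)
The plan is to prove both assertions simultaneously by contradiction, with the core of the argument being a sub-solution built from a principal eigenfunction of a nonlocal linear operator on a slightly enlarged interval. Write $c_1 = k(a-\beta)/b$ and $c_2 = a+\gamma$, so the hypothesis reads $\lambda_1(c_1, c_2, (-h_0, h_0)) \le 0$. The positivity of $I_0$ on $(-h_0, h_0)$ together with Hopf's lemma at the initial free boundary forces $h'(0) > 0$ and $g'(0) < 0$, hence $h_\infty > h_0$ and $-g_\infty > h_0$ strictly. I would fix $L$ with $h_0 < L < \min\{h_\infty, -g_\infty\}$; then $(g(t), h(t)) \supset [-L, L]$ for all $t$ beyond some $T_0$. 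By monotonicity of $\lambda_1(c_1, c_2, \cdot)$ in the spatial domain (see \cite[Proposition B.1]{HWzamp19}, in the spirit of Proposition 3.2 (ii)) combined with the strict containment $[-h_0, h_0] \subsetneq [-L, L]$, one obtains $\lambda_1(c_1, c_2, (-L, L)) < \lambda_1(c_1, c_2, (-h_0, h_0)) \le 0$. Continuity of $\lambda_1$ in its coefficients then lets me fix $\varepsilon > 0$ so small that $\lambda^\varepsilon := \lambda_1(c_1 - k\varepsilon,\, c_2 + b\varepsilon,\, (-L, L)) < 0$, with normalized positive principal eigenfunction $\phi^\varepsilon$ on $(-L, L)$.

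Assume, for contradiction, that $\lim_{t\to\infty}\|I(\cdot, t)\|_{C([g(t), h(t)])} = 0$. Step 2 of the proof of Theorem \ref{t4.2} uses only $I\to 0$ and the ODE comparison $N\le q(t)$, not the finiteness of the free boundaries, so replaying it yields $S(t, x) \to (a-\beta)/b$ locally uniformly in $\R$. Choose $T_\varepsilon \ge T_0$ so that $S \ge (a-\beta)/b - \varepsilon$ and $N \le (a-\beta)/b + \varepsilon$ on $[-L, L] \times [T_\varepsilon, \infty)$. Since $I \ge 0$ and $[-L, L] \subset (g(t), h(t))$, one has $\mathcal{P}[I](t, x) \ge \int_{-L}^L P(x, y) I(t, y)\,dy$, so $I$ is a supersolution on $[-L, L] \times [T_\varepsilon, \infty)$ of the linear nonlocal Dirichlet problem
\begin{equation*}
u_t - d u_{xx} = (c_1 - k\varepsilon) \int_{-L}^L P(x, y)\, u(t, y)\,dy - (c_2 + b\varepsilon)\, u,\qquad u(t, \pm L) = 0.
\end{equation*}
Pick $\delta > 0$ with $\delta \phi^\varepsilon \le I(T_\varepsilon, \cdot)$ on $[-L, L]$, possible because $\phi^\varepsilon(\pm L) = 0$ while $I(T_\varepsilon, \pm L) > 0$ (the true habitat strictly encloses $[-L, L]$). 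Then $\underline{u}(t, x) = \delta\, e^{-\lambda^\varepsilon (t - T_\varepsilon)} \phi^\varepsilon(x)$ is an exact solution of this Dirichlet problem, and a comparison principle for the linear nonlocal parabolic problem — a direct adaptation of Lemma \ref{l2.1} and Proposition \ref{p2.1} — gives $I(t, x) \ge \underline{u}(t, x)$ on $[-L, L] \times [T_\varepsilon, \infty)$. Since $-\lambda^\varepsilon > 0$, $\underline{u}$ grows without bound in $t$, contradicting $\|I\|_\infty \to 0$. Hence $\limsup_{t\to\infty}\|I(\cdot, t)\|_{C([g(t), h(t)])} > 0$. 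The spreading conclusion follows: if $h_\infty - g_\infty < \infty$, Theorem \ref{t4.2} gives $I \to 0$, in contradiction with what we have just shown, so $h_\infty - g_\infty = \infty$.

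The hard part will be the \emph{strict} monotonicity of $\lambda_1(c_1, c_2, \cdot)$ in the spatial domain, crucial precisely in the borderline case $\mathcal{R}_{02} = 1$; I would invoke \cite[Proposition B.1]{HWzamp19} for this, noting that with just $P \ge 0$ it is not automatic. A secondary check is the comparison principle for the linear nonlocal parabolic Dirichlet problem used above, which is a straightforward adaptation of Lemma \ref{l2.1}. The remaining nuisance is the asymmetric half-vanishing case where exactly one of $h_\infty$, $-g_\infty$ is finite: the standard route is to show that $h_\infty < \infty$ forces $I_x(t, h(t)) \to 0$ and $I(t, \cdot) \to 0$ in a left neighborhood of $h_\infty$ by the usual free-boundary parabolic estimates, and then to rerun the eigenvalue sub-solution construction on a sub-interval compactly contained in $(g_\infty, h_\infty)$ that strictly contains $[-h_0, h_0]$, producing the same contradiction and thereby forcing both $h_\infty = \infty$ and $-g_\infty = \infty$.
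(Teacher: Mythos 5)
Your proposal is correct and follows essentially the same route as the paper: a principal‑eigenfunction subsolution for the nonlocal eigenvalue problem on an interval where $\lambda_1<0$, combined with Theorem \ref{t4.2}, the strict domain monotonicity of $\lambda_1$ from \cite[Proposition B.1]{HWzamp19}, and a separate treatment of the half‑vanishing case. Your unified handling of the borderline case $\mathcal{R}_{02}=1$ via the strictly enlarged interval $(-L,L)$ is exactly the paper's Case 2 (which restarts at time $t_0>0$ so that $(g(t_0),h(t_0))\supsetneq(-h_0,h_0)$), and your exponentially growing subsolution versus the paper's stationary one is an immaterial variation.
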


\begin{proof} To save spaces, we denote $g_\infty^\varepsilon=g_\infty+\varepsilon$ and $h_\infty^\varepsilon=h_\infty-\varepsilon$.\vskip 2pt

{\it Case 1}: $\mathcal{R}_{02}(\frac{k(a-\beta)}{b},a+\gamma,(-h_0,h_0))>1$. Thus we have   $\lambda_1(\frac{k(a-\beta)}{b},a+\gamma,(-h_0,h_0))<0$.

\textbf{Step 1.}  We prove $h_\infty-g_\infty =\infty$. Assuming on the contrary that $h_\infty-g_\infty <\infty$. Then we have that \eqref{4.2} and \eqref{4.3} hold by Theorem \ref{t4.2}. Moreover, combining with \eqref{4.5}, there exists small $\varepsilon>0$ such that  $\lambda_1(k(\frac{a-\beta}{b}-\varepsilon),a+\gamma+\varepsilon,
(g_\infty^\varepsilon,h_\infty^\varepsilon))<0$ by the properties of $\lambda_1(c_1,c_2,\mathcal{I})$. For this $\varepsilon$, there exists $T^*>0$ such that $g(t)<g_\infty^\varepsilon$, $h(t)>h_\infty^\varepsilon$, and $S(t,x)\ge \frac{a-\beta}{b}-\varepsilon$ by \qq{4.3}, and $N(t,x)\le \frac{a-\beta}{b}+\varepsilon$ by \qq{4.5} for all $t\ge T^*$ and $x\in [g_\infty^\varepsilon,h_\infty^\varepsilon]$. Thus $I$ satisfies
  \bes\left\{\begin{array}{ll}
  I_t-d I_{xx}\dd\ge k\kk(\frac{a-\beta}{b}-{\varepsilon}\rr)\int_{g_\infty^\varepsilon}^{h_\infty^\varepsilon}P(x,y)I(t,y)\,{\rm d}x-(\gamma
  +a+\varepsilon)I,&t\geq T^*,\; x\in(g_\infty^\varepsilon,h_\infty^\varepsilon),\\[3mm]
  I(t,g_\infty^\varepsilon)> 0,\; I(t,h_\infty^\varepsilon)> 0, &t\ge T^*.
  \end{array}\right.\qquad
  \lbl{4.8}\ees
Set
 \[\underline{I}(t,x)=\delta\phi_\varepsilon(x) \; \; \text{for all}\ t\ge T^*,\; x\in [g_\infty^\varepsilon,h_\infty^\varepsilon],\]
where $0<\phi_\varepsilon(x)\le 1$ is the eigenfunction corresponding to $\lambda_1(k(\frac{a-\beta}{b}-\varepsilon),a+\gamma+\varepsilon,
(g_\infty^\varepsilon,h_\infty^\varepsilon))$, $\delta$ is a
enough small positive constant such that
$\delta\phi_\varepsilon(x)<I(T^*,x)$ in $ [g_\infty^\varepsilon,h_\infty^\varepsilon]$. Obviously, $\delta\phi_\varepsilon(g_\infty^\varepsilon)=0<I(t,g_\infty^\varepsilon)$ and $\delta\phi_\varepsilon(h_\infty^\varepsilon)=0<I(t,h_\infty^\varepsilon)$.  Directly calculate yields
  \begin{align*}
 &\underline{I}_t-d\underline{I}_{xx}-k\dd\kk((a-\beta)/b-\varepsilon\rr)\mathcal{P} [\underline{I}]+(a+\gamma+\varepsilon)\underline{I}\\[1mm]
 &=\delta\lambda_1\left(k\kk((a-\beta)/b-\varepsilon\rr),a+\gamma+\varepsilon,
(g_\infty^\varepsilon,h_\infty^\varepsilon)\right)\phi_\varepsilon(x)\\[1mm]
&<0
\end{align*}
for $t\ge T^*$ and $x\in(g_\infty^\varepsilon,h_\infty^\varepsilon)$. By the comparison principle, we get
 \bess
  \liminf_{t\to\infty}I(t,\cdot)\ge\liminf_{t\to\infty}\underline{I}(t,\cdot)
  =\delta\phi_\varepsilon(\cdot)>0\;\;\mbox{in }\; (g_\infty^\varepsilon,h_\infty^\varepsilon).
  \eess
This contradicts with the first limit of \eqref{4.2}. Therefore $h_\infty-g_\infty=\infty$.

\textbf{Step 2.}  We prove $h_\infty=-g_\infty =\infty$. Assuming on the contradiction that $h_\infty<\infty$. Thanks to $h_\infty-g_\infty =\infty$, we have $g_\infty=-\infty$. For any given $-\infty<l<h_0$ and $0<\varepsilon\ll1$, there exists $T\gg 1$ such that $g(t)<l$ and $N(t,x)\le \frac{a-\beta}{b}+\varepsilon$ for all $t\ge T$ and $x\in(l,h(t))$. Thus $(I, h(t))$ satisfies
 \bess\left\{\begin{array}{ll}
 I_t-d I_{xx}\ge -(a+\gamma+\varepsilon)I,\;\;&t>T,\;x\in(l,h(t)),\\[1mm]
 I(t,l)\ge 0,\,\,I(t, h(t))=0,&t>T,\\[1mm]
 h'(t)=-\mu I_x(t,h(t)),&t>T,\\[1mm]
 I(T,x)>0,&x\in(l, h(T)).
 \end{array}\right.
 \eess
Using \cite[Lemma 8.7]{Wangpara} or \cite[Lemma 4.1]{wzjde18} we get
$\dd\lim_{t\to\infty}\max_{l\le x\le h(t)}I(t,x)=0$.
This means that
 \bes
 \dd\lim_{t\to\infty}I(t,\cdot)=0\;\;\mbox{ locally uniformly in }\; (-\infty, h_\infty).
 \lbl{4.9}\ees
Same as the proof of Theorem \ref{t4.2} we can prove that $$\dd\lim_{t\to\infty}R(t,\cdot)=0,\,\,\, \lim_{t\to\infty} S(t,\cdot)=(a-\beta)/b\;\;\mbox{locally uniformly in }\; (-\infty, h_\infty).$$

For any given $l_1<l_2$ satisfying $(l_1,l_2)\supset[-h_0, h_0]$ and  $[l_1,l_2]\subset(-\infty,h_\infty)$, we can get   $\lambda_1(k(\frac{a-\beta}{b}-\varepsilon),a+\gamma+\varepsilon,
(l_1,l_2))<0$ when $0<\varepsilon\ll 1$. Moreover, there exists $T^*>0$ such that  $g(t)<l_1$, $h(t)>l_2$,  $S(t,x)\ge \frac{a-\beta}{b}-\varepsilon$ and $N(t,x)\le \frac{a-\beta}{b}+\varepsilon$ for all $t\ge T^*$ and $x\in [l_1,l_2]$. Therefore $I$ satisfies \eqref{4.8} with $g_\infty^\varepsilon$ and $h_\infty^\varepsilon$ replacing by $l_1$ and $l_2$, respectively. Set
 \[\underline{I}(t,x)=\phi(x) \; \ \text{for}\ t\ge T^*,\; x\in [l_1,l_2],\]
where $0 < \phi(x) \le 1$ is the eigenfunction corresponding to $\lambda_1((k(\frac{a-\beta}{b}-\varepsilon),a+\gamma+\varepsilon,(l_1,l_2))$. Same as  above, we can prove that
\bess
  \liminf_{t\to\infty}I(t,\cdot)>0\;\;\mbox{in }\; (l_1,l_2).
  \eess
This contradicts with \eqref{4.9}. Similarly, we can also prove $g_\infty=-\infty$.

The above argumentation also means $\dd\lim_{t\to\infty}\|I({\cdot},t)\|_{ C([g(t),h(t)])}>0$.

{\it Case 2}: $\mathcal{R}_{02}((k(\frac{a-\beta}{b}), a+\gamma,\,(-h_0,h_0))=1$. In this case, $\lambda_1((k(\frac{a-\beta}{b}),a+\gamma,(-h_0,h_0))=0$. Therefore,   $\lambda_1((k(\frac{a-\beta}{b}),a+\gamma,(g(t_0),h(t_0))<0$  for any $t_0>0$ by the monotonicity of $\lambda_1(c_1,c_2,\mathcal{I})$ in $\mathcal{I}$. Replacing the initial time $0$ by $t_0$, similar to Case 1, we can get $h_\infty-g_\infty=\infty$.
\end{proof}

It follows from \cite[Propositon B.1]{HWzamp19} that there exists $l^*$ such that
  \bess\left\{\begin{array}{ll}
 \lambda_1(\frac{k(a-\beta)}{b},a+\gamma,(l_1,l_2))=0\;\;&{\text{if}}\ \  l_2-l_1=l^*,\\[1mm]
  \lambda_1(\frac{k(a-\beta)}{b},a+\gamma,(l_1,l_2))>0\;\;&{\text{if}}\ \   l_2-l_1<l^*,\\[1mm]
  \lambda_1(\frac{k(a-\beta)}{b},a+\gamma,(l_1,l_2))<0\;\;&{\text{if}}\ \   l_2-l_1>l^*.
\end{array}\right.
\eess
So we can obtain the following property easily.

\begin{col}\lbl{c4.1}\, If $h_{\infty}-g_{\infty}<\infty$, then $h_{\infty}-g_{\infty}\le l^*$.
Thus $2h_0>l^*$ implies $h_{\infty}-g_{\infty}=\infty$.
\end{col}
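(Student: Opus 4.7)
The plan is to argue by contrapositive: I would assume $h_\infty - g_\infty > l^*$ (allowing the value $+\infty$) and show that in fact $h_\infty - g_\infty = \infty$. Combined with the monotonicity $h(t)-g(t)\ge 2h_0$, this gives both claims of the corollary: the first statement is the contrapositive, and the second is immediate since $2h_0>l^*$ forces $h_\infty-g_\infty\ge 2h_0>l^*$, hence $h_\infty-g_\infty=\infty$ by the first part.

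The first step is to use that $h(t)-g(t)$ is continuous and nondecreasing in $t$ with $\lim_{t\to\infty}(h(t)-g(t))=h_\infty-g_\infty>l^*$. Choose $t_0>0$ large enough that $h(t_0)-g(t_0)>l^*$. By the dichotomy displayed just before the corollary, this yields
\[\lambda_1\!\left(\tfrac{k(a-\beta)}{b},\,a+\gamma,\,(g(t_0),h(t_0))\right)<0,\qquad \mathcal{R}_{02}\!\left(\tfrac{k(a-\beta)}{b},\,a+\gamma,\,(g(t_0),h(t_0))\right)>1.\]

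The second step is to re-run the argument of Theorem \ref{t4.3} with the initial time $0$ replaced by $t_0$ and the initial interval $[-h_0,h_0]$ replaced by $[g(t_0),h(t_0)]$. The new initial data $S(t_0,\cdot)$, $I(t_0,\cdot)$, $R(t_0,\cdot)$ are strictly positive on the corresponding intervals by Theorem \ref{t4.1}, so the hypotheses used in the proof of Theorem \ref{t4.3} (positivity of $I$ on the moving interval, the bound \eqref{4.5}, and the lower bound on $S$) continue to hold from $t_0$ onward. Exactly as in Case 2 of Theorem \ref{t4.3}, the strict inequality $\lambda_1<0$ on $(g(t_0),h(t_0))$ lets us construct the lower solution $\underline I(t,x)=\delta\phi_\varepsilon(x)$ supported on a sub-interval of $(g_\infty^\varepsilon,h_\infty^\varepsilon)$ and derive a contradiction to \eqref{4.2} unless $h_\infty-g_\infty=\infty$; the subsequent Step 2 then promotes this to $h_\infty=-g_\infty=\infty$.

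The main obstacle is ensuring that the time-shifted version of Theorem \ref{t4.3} really does apply verbatim; this is essentially a bookkeeping matter, because the authors already note inside Case 2 of that theorem that replacing the initial time by $t_0$ and using monotonicity of $\lambda_1$ in $\mathcal{I}$ runs through. No new analytic estimate is needed beyond what is already in Theorems \ref{t4.1}--\ref{t4.3}, so the proof reduces to picking $t_0$ and invoking the existing argument.
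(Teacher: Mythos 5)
Your proposal is correct and follows essentially the same route as the paper: both reduce the claim to the sign of $\lambda_1(\frac{k(a-\beta)}{b},a+\gamma,\cdot)$ on an interval of length exceeding $l^*$ and then invoke the lower-solution construction of Theorem \ref{t4.3} to contradict the vanishing limit \eqref{4.2}. The only cosmetic difference is that you work on $(g(t_0),h(t_0))$ for a finite $t_0$ with a time shift, whereas the paper argues directly on the limiting interval $(g_\infty,h_\infty)$; both variants rely on the same monotonicity of $\lambda_1$ in the domain and are equally valid.
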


\begin{proof} Indirect argument we assume $h_{\infty}-g_{\infty}>l^*$, then we have   $\lambda_1(\frac{k(a-\beta)}{b},a+\gamma,(g_{\infty},h_{\infty}))<0$. Similar to the proof of Theorem \ref{t4.3} we can prove  $\dd\liminf_{t\to\infty}I(t,x)>0$,\;$x\in[g(t),h(t)]$, which implies $\dd\lim_{t\to\infty}\|I({\cdot},t)\|_{ C([g(t),h(t)])}>0$. This contradicts with the first limit of \eqref{4.2}. Therefore $h_{\infty}-g_{\infty}=\infty$ by Theorem \ref{t4.2}. This is a contradiction.
\end{proof}

\begin{theo}\lbl{t4.5} Let $\|S_0\|_{L^\infty(\mathcal{R})}+ \|I_0\|_{C([-h_0,h_0])
}\le(a-\beta)/b$ and $\mathcal{R}_{02}(\frac{k(a-\beta)}{b},\gamma+\beta,\,(-h_0,h_0))<1$. Then there exist $\mu^*\geq\mu_*>0$, such that $h_\infty-g_\infty=\infty$ when $\mu>\mu^*$, and $h_\infty-g_\infty\leq l^*$ when $\mu\leq\mu_*$ or $\mu=\mu^*$.
\end{theo}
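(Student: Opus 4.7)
The plan is to exploit the $\mu$-monotonicity of $(g,h)$, Corollary \ref{c4.1}, and the eigenvalue characterization of $l^*$ to reduce the theorem to two constructive bounds. A standard free-boundary comparison for $\mu_1<\mu_2$ (adapting \cite[Lemma 4.1]{wzjde18} to the nonlocal reaction) yields $g_{\mu_2}(t)\le g_{\mu_1}(t)\le h_{\mu_1}(t)\le h_{\mu_2}(t)$, so $h_\infty-g_\infty$ is nondecreasing in $\mu$. By Theorem \ref{t4.2} and Corollary \ref{c4.1}, its value is either $\le l^*$ or $=\infty$. I will set $\mu^*=\sup\{\mu>0:\,h_\infty-g_\infty\le l^*\}$ and show $0<\mu^*<\infty$ together with $\mu^*\in\Sigma_v$, then let $\mu_*$ be any constructively produced lower bound for $\mu^*$.

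\emph{Vanishing for small $\mu$.} The hypothesis, together with the monotonicity of $\lambda_1$ in its domain, lets me fix $s_0\in(h_0,l^*/2)$ with $\lambda_0^*:=\lambda_1\big(\tfrac{k(a-\beta)}{b},\gamma+\beta,(-s_0,s_0)\big)>0$; let $\phi>0$ be the corresponding eigenfunction with $\max\phi=1$. The initial-data hypothesis plus the logistic comparison for \eqref{4.4} forces $N\le(a-\beta)/b$, hence $S\le(a-\beta)/b$ globally, so
$$I_t-dI_{xx}\le k\tfrac{a-\beta}{b}\,\mathcal{P}[I]-(\gamma+\beta)I\;\;\;\text{in}\;\;(g(t),h(t)).$$
Following the standard upper-solution construction for nonlocal free-boundary problems, I set $\bar h(t)=-\bar g(t)=h_0+(s_0-h_0)(1-e^{-\delta t})$ and $\bar I(t,x)=Me^{-\delta t}\phi\big(s_0 x/\bar h(t)\big)$ on the moving box $[\bar g(t),\bar h(t)]$. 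For $M$ large and $\delta>0$ small, the strict positivity of $\lambda_0^*$ makes $\bar I$ an upper solution of the $I$-equation, and the Stefan condition $\bar h'(t)\ge-\mu\bar I_x(t,\bar h(t))$ reduces to $\mu\le\mu_*$ for an explicit $\mu_*>0$ depending on $\delta$, $M$, $s_0$, $h_0$, $\phi'(\pm s_0)$. The free-boundary comparison principle then yields $-s_0<\bar g(t)\le g(t)\le h(t)\le\bar h(t)<s_0$, hence $h_\infty-g_\infty\le 2s_0<l^*$ whenever $\mu\le\mu_*$.

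\emph{Spreading for large $\mu$, and the endpoint.} Following the argument in \cite[Lemma 3.7]{ChenW}, I compare $I$ from below on $[0,1]\times[-h_0,h_0]$ with the solution of a $\mu$-independent linear problem with homogeneous Dirichlet data; together with the Hopf lemma applied to the $I$-equation this produces a lower bound $-I_x(t,h(t))\ge c_1>0$ on a fixed interval $[1,2]$ independent of $\mu$. Integrating $h'=-\mu I_x(t,h(t))$ gives $h(2)-g(2)\ge 2h_0+2c_1\mu$, which exceeds $l^*$ once $\mu$ is large; by Corollary \ref{c4.1} this forces $h_\infty-g_\infty=\infty$. Hence $\mu^*<\infty$. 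To see $\mu^*\in\Sigma_v$, assume otherwise: then $h_{\mu^*}(T)-g_{\mu^*}(T)>l^*$ at some finite $T$, and continuous $\mu$-dependence of $(g,h)$ on $[0,T]$, which follows from the uniform a priori bounds of Theorem \ref{t4.1} and a standard compactness argument, extends the strict inequality to a left neighbourhood of $\mu^*$. Applying Corollary \ref{c4.1} to those $\mu<\mu^*$ gives $h_\infty-g_\infty=\infty$, contradicting the definition of $\mu^*$ as a supremum over $\Sigma_v$.

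The main obstacle will be the upper-solution construction in the vanishing step: the nonlocal term $\mathcal{P}[\bar I]$ in the equation satisfied by $\bar I$ is integrated over the moving interval $(\bar g(t),\bar h(t))$, while the eigenvalue $\lambda_0^*$ is defined on the fixed interval $(-s_0,s_0)$. The strict inclusion $(\bar g(t),\bar h(t))\subsetneq(-s_0,s_0)$ and the nonnegativity of $P$ keep the resulting expression controlled by the one coming from $\lambda_0^*$, but verifying this cleanly requires careful bookkeeping in the scaled coordinate $s_0 x/\bar h(t)$ and in the exponential factor $e^{-\delta t}$, together with absorbing the error terms $\bar I_t$ and the difference $\bar I_{xx}-\phi''$ into the spectral gap $\lambda_0^*$.
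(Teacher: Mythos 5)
Your overall architecture coincides with the paper's: a constructive upper solution on a slowly expanding box for small $\mu$ (the paper uses the scaling $\sigma(t)=1+2\delta-\delta e^{-\delta t}$ and perturbs the coefficient by $\varepsilon$ rather than enlarging the interval to $(-s_0,s_0)$, but the error term you single out as the main obstacle is handled there by exactly the bookkeeping you anticipate: Lipschitz continuity of $P$ on a compact rectangle absorbed into the spectral gap $\lambda_\varepsilon$); spreading for large $\mu$; and a supremum-plus-continuous-dependence argument at the endpoint. Two points, however, need repair.

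First, the $\mu$-monotonicity $g_{\mu_2}\le g_{\mu_1}\le h_{\mu_1}\le h_{\mu_2}$ that you announce as the organizing principle is not available here: comparing two solutions of \eqref{1.5} with different $\mu$ requires a comparison principle for the full coupled system, and the paper emphasizes (Section 2) that this fails because the nonlocal term $-k\mathcal{P}[I]S$ enters the $S$-equation with a negative coefficient. The paper's Step 3 is built around the set $\Sigma_*=\{\nu:\ h_\infty-g_\infty\le l^*\ \text{for all}\ 0<\mu\le\nu\}$ precisely to avoid any monotonicity in $\mu$. Fortunately your subsequent logic never actually uses the monotonicity: with $\mu^*=\sup\{\mu>0:\ h_\infty-g_\infty\le l^*\}$, spreading for $\mu>\mu^*$ already follows from the definition of supremum together with the dichotomy of Corollary \ref{c4.1}, and your endpoint contradiction only needs that excluding a left neighbourhood of $\mu^*$ from the set would lower its supremum. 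So the claim should be deleted rather than proved.

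Second, your large-$\mu$ step rests on a uniform-in-$\mu$ bound $-I_x(t,h(t))\ge c_1>0$ on $[1,2]$ obtained ``by the Hopf lemma.'' The comparison with a $\mu$-independent Dirichlet problem controls $I$ from below only on the fixed box $[-h_0,h_0]$, whereas the Hopf constant at $x=h(t)$ requires a lower bound for $I$ on an interior ball touching the point $h(t)$, whose location depends on $\mu$; as stated the uniform bound is not justified. The paper instead invokes a ready-made lemma (\cite[Lemma 8.7]{Wangpara} or \cite[Lemma 4.3]{wzdcds18}), applicable directly since $I_t-dI_{xx}\ge-(a+\gamma)I$, to conclude $h_\infty-g_\infty>l^*$ for large $\mu$; you should do the same, or run the standard lower-solution construction underlying that lemma, rather than the Hopf-lemma shortcut.
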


\begin{proof} Making use of \eqref{4.4} and the condition $\|S_0\|_{L^\infty(\mathbb{R})}+ \|I_0\|_{C([-h_0,h_0])}\le\frac{a-\beta}{b}$, we can easily obtain that $0<N(t,x)\le \frac{a-\beta}{b}$ for all $t\ge 0$ and $x\in\mathbb{R}$.
Thus $I$ satisfies
  \bess
  I_t-d I_{xx}\le\frac{k(a-\beta)}{b}\mathcal{P}[I]-(\gamma+\beta)I,&\;\;\forall\;t> 0,\; x\in(g(t), h(t)).
  \eess

\textbf{Step 1.} We first show that $h_\infty-g_\infty<\infty$ when $\mu$ is small. The proof process is similar with that in \cite[Theorem 3.3]{HWzamp19} for slightly revises. For the convenience of readers, let's give the details. By  $\mathcal{R}_{02}(\frac{k(a-\beta)}{b},\gamma+\beta,\,(-h_0,h_0))<1$, we have $\lambda_1(\frac{k(a-\beta)}{b},\gamma+\beta,(-h_0,h_0))>0$. There exists a $\varepsilon>0$ such that $\lambda_\varepsilon:=\lambda_1(k(\frac{a-\beta}{b}+\varepsilon),\gamma+\beta,(-h_0,h_0))>0$. Let $\phi(x)$ be the corresponding positive eigenfunction to $\lambda_\ep$ with $\|\phi\|_\infty=1$. Then $\phi(x)$ satisfies
 \bes\lbl{4.10}
\begin{cases}
-d\phi''-k(\frac{a-\beta}{b}+\varepsilon)\mathcal{P}[\phi]+(\gamma+\beta)\phi=\lambda_\ep \phi,\ \ x\in (-h_0,h_0),\\
\phi(\pm h_0)=0.
\end{cases}
 \ees
It follows that $\phi\in C^{2+\alpha}([-h_0, h_0])$ and
 \bess
\begin{cases}
-d\phi''+(\gamma+\beta)\phi=\lambda_\ep\phi+k(\frac{a-\beta}{b}+\varepsilon)
\mathcal{P}[\phi]>0,\ \ x\in (-h_0,h_0),\\
\phi(\pm h_0)=0.
\end{cases}
 \eess
Therefore, $\phi'(-h_0)>0$ and $\phi'(h_0)<0$ by the Hopf boundary lemma.

We define
  $$\begin{cases}
\sigma(t)=1+2\delta-\delta{\rm e}^{-\delta t},\quad s(t)=h_0 \sigma(t),&t>0,\\[1mm]
\bar I(t,x)=A {\rm e}^{-\delta t}\phi(\frac{x}{\sigma(t)}),&t>0\ x\in[-s(t),s(t)],
\end{cases}$$
where $\delta, A>0$ are two constants to be determined. It is clear that $-s(0)\le -h_0$ and $s(0)\ge h_0$. We shall check that $(\bar I(t,x), s(t))$ satisfies
\begin{align}
&\bar{I}_t-d\bar I_{xx}\ge\frac{k(a-\beta)}{b}\mathcal{P}[\bar I]-(\gamma+\beta)\bar{I}\quad \text{for}\; t>0,\ x\in(-s(t),s(t)),\lbl{4.11}\\
&\bar{I}(t,\pm s(t))=0,\ \ -s'(t)\le -\mu \bar{I}_x(t,- s(t)),\ \ s'(t)\ge -\mu \bar{I}_x(t,  s(t)) \quad \text{for}\; t>0,\lbl{4.12}\\
&\bar{I}(0,x)\ge I(0,x)\quad \text{for}\; x\in[-h_0,h_0],\lbl{4.13}
\end{align}
when $\mu$ is small.

First, it is easily to see that \eqref{4.13} holds for sufficient large $A$.

Next, we check \eqref{4.12}. Direct computations show that
\begin{align*}
| \bar I_x(t,\pm s(t))|= A{\rm e}^{-\delta t}|\phi'(\pm h_0)|/\sigma(t)\le  A{\rm e}^{-\delta t}|\phi'(\pm h_0)|/(1+\delta) \ \ \text{and}\ \ s'(t)=h_0\delta^2{\rm e}^{-\delta t}.
\end{align*}
This implies that \eqref{4.12} holds if
 $$\mu\le \frac{h_0\delta^2(1+\delta)}{A|\phi'(\pm h_0)|}:=\mu_0.$$

At last, we clarify \eqref{4.11}. Make use of \eqref{4.10} and $\sigma(t)>1$, by the carefully calculations we have
 \begin{align*}
 &\ \bar{I}_t-d\bar I_{xx}-\frac{k(a-\beta)}{b}\mathcal{P}[\bar I]+(\gamma+\beta)\bar{I}\\
 =&\ A {\rm e}^{-\delta t}\left(-\delta\phi(z)-\phi'(z)\frac{z\sigma'(t)}{\sigma(t)}-
d \frac{\phi''(z)}{\sigma^2(t)}-\frac{k(a-\beta)}{b}\int_\mathbb{R} P(\sigma(t) z,y)\phi\left(\frac{y}{\sigma(t)}\right){\rm d}y+(\gamma+\beta)\phi(z)\right)\\[1mm]
\ge&\ A {\rm e}^{-\delta t}\left(-\delta\phi(z)-\phi'(z)\frac{z\sigma'(t)}{\sigma(t)}-
 d\frac{\phi''(z)}{\sigma^2(t)}-\frac{k(a-\beta)}{b}\int_\mathbb{R} P(\sigma(t) z,y)\phi\left(\frac{y}{\sigma(t)}\right){\rm d}y+\frac {\gamma+\beta}{\sigma^2(t)}\phi(z)\right)\\[1mm]
 =&\ A {\rm e}^{-\delta t}\left(\frac{\lambda_\varepsilon}{\sigma^2(t)}\phi(z)-\delta\phi(z)-\phi'(z)
\frac{z\sigma'(t)}{\sigma(t)}+F(t,z)\right),
\end{align*}
where $z=x/\sigma(t)\in(-h_0,h_0)$ and
 \[F(t,z)=\frac{k(\frac{a-\beta}{b}+\varepsilon)}{\sigma^2(t)}\mathcal{P}[\phi](z)-\frac{k(a-\beta)}{b}\int_\mathbb{R} P(\sigma(t) z,y)\phi\left(\frac{y}{\sigma(t)}\right){\rm d}y.\]
We now estimate $F(t,z)$. Direct calculation yields
 \begin{align*}
F(t,z)\ge&\frac{\varepsilon}{\sigma^2(t)}k\mathcal{P}[\phi](z)
-\frac{k(a-\beta)}{b}\left|\frac 1{\sigma^2(t)}\mathcal{P}[\phi](z)-\int_\mathbb{R} P(\sigma(t)z,y)\phi\left(\frac{y}{\sigma(t)}\right){\rm d}y\right|\\[1mm]
=&\frac{\varepsilon}{\sigma^2(t)}k\mathcal{P}[\phi](z)
-\frac{k(a-\beta)}{b}\left|\frac 1{\sigma^2(t)}\mathcal{P}[\phi](z)-\sigma(t)\int_\mathbb{R} P\left(\sigma(t)z,\sigma(t)y\right)\phi(y){\rm d}y\right|\\[1mm]
\ge& \frac{\varepsilon}{\sigma^2(t)}k\mathcal{P}[\phi](z)-\frac{k(a-\beta)}{b}\sigma(t)\!\int_{-h_0}^{h_0}\!\! \left|P(z,y)-P(\sigma(t)z,\sigma(t)y)\right|{\rm d}y-\frac{6\delta k(a-\beta)}{b}.
 \end{align*}
In view of the hypothesis {\bf(P2)} and the facts that $\phi\in C^{2+\alpha}([-h_0, h_0])$ and $\phi(y)>0$ for all $y\in(-h_0, h_0)$, it is easy to see that the function
 \bess
 \mathcal{P}[\phi](z)=\int_{-\infty}^\infty P(z,y)\phi(y){\rm d}y=\int_{-h_0}^{h_0} P(z,y)\phi(y){\rm d}y
 \eess
is continuous and positive in $[-h_0,h_0]$. Therefore,
  \[m=\min_{z\in[-h_0,h_0]}\frac{\varepsilon}{4}k\mathcal{P}[\phi](z)>0.\]
As $P$ is Lipschitz continuous in the rectangle $[-h_0(1+2\delta), h_0(1+2\delta)]^2$, there exists a constant $\delta^*\in(0,1/2)$ such that for any $0<\delta\le\delta^*$,
 \[\frac{k(a-\beta)}{b}\sigma(t)\int_{-h_0}^{h_0} \left|P(z,y)-P(\sigma(t)z,\sigma(t)y)
 \right|{\rm d}y\le \frac{m}{2}.\]
Notice that $\sigma(t)<2$ when $0<\delta<1/2$. It follows that $F(t,z)\ge 0$ in $[0,\infty)\times[-h_0,h_0]$ when $0<\delta\le \min\{\delta^*,  \frac{m}{12k(a-\beta)/b}\}$. For such $\delta$, we have
 \[ \bar{I}_t-d\bar I_{xx}-\frac{k(a-\beta)}{b}\mathcal{P}[\bar I]+(\gamma+\beta)\bar{I}\ge\ A {\rm e}^{-\delta t}\left(\frac{\lambda_\varepsilon}{\sigma^2(t)}\phi(z)-\delta\phi(z)-\phi'(z)
 \frac{z\sigma'(t)}{\sigma(t)}\right).\]
Since $\phi'(-h_0)>0$ and $\phi'(h_0)<0$, we can find a $\tau>0$ such that
 \[-\phi'\left(z\right)\frac{z\sigma'(t)}{\sigma(t)}\ge 0\quad \text{for}\ t>0,\ z\in [-h_0,-h_0+\tau]\cup[h_0-\tau,h_0].\]
Hence, for $ t>0$ and $z\in [-h_0,-h_0+\tau]\cup[h_0-\tau,h_0]$, we can get
 \begin{align*}
   \bar{I}_t-d\bar I_{xx}-\frac{k(a-\beta)}{b}\mathcal{P}[\bar I]+(\gamma+\beta) \bar{I}\ge 0
 \end{align*}
provided that $0<\delta\ll 1$. On the other hand, for $t>0$ and $z\in[-h_0+\tau,h_0-\tau]$, we can get
 \bess
\bar{I}_t-d\bar I_{xx}-\frac{k(a-\beta)}{b}\mathcal{P}[\bar I]+(\gamma+\beta) \bar{I}
&\ge&\ A {\rm e}^{-\delta t}\kk[\kk(\frac{\lambda_\varepsilon}{\sigma(t)^2}-\delta\rr)
\min_{[-h_0+\tau,h_0-\tau]}\phi(z)-\|\phi'\|_\infty h_0\delta^2{\rm e}^{-\delta t}\rr]\\
&\ge& 0
 \eess
provided that $0<\delta\ll 1$. Thus \eqref{4.11} holds for the enough small $\delta$.

Now, we can use the comparison principle to gain
 \[h(t)-g(t)\le2s(t)\to 2h_0(1+2\delta)<\infty\;\; \text{as}\; t\to\infty.\]

\textbf{Step 2.} Since $I$ satisfies $I_t-d I_{xx}\ge -(a+\gamma)I$, we can obtain  $h_\infty-g_\infty>l^*$ when $\mu>\mu^*$ for some $\mu^*>0$ (\cite[Lemma 8.7]{Wangpara}, or \cite[Lemma 4.3]{wzdcds18}). Thus $h_\infty-g_\infty=\infty$ when $\mu>\mu^*$ by Corollary \ref{c4.1}.

\textbf{Step 3.} Define
\bess
\sum{_*}=\left\{\nu: \nu\ge \mu_0\; such \;that\;h_\infty-g_\infty\le l^*\;for \;all \;0<\mu\le \nu\right\}
\eess
where $\mu_0$ is given by step 1. Then $\mu_*:=sup\sum{_*}\le \mu^*$ and $(0,\mu_*)\subset\sum{_*}$.
We will prove that $\mu_*\in \sum{_*}$. Assuming on the contrary that  $h_{\mu_*,\infty}-g_{\mu_*,\infty}=\infty$. There exists $T>0$ such $h_{\mu_*}(T)-g_{\mu_*}(T)>l^*$. By the continuous dependence of $(S_\mu,I_\mu,R_\mu;g_\mu,h_\mu)$ on $\mu$, there \;is\; $\varepsilon>0$ such that $h_{\mu}(T)-g_{\mu}(T)>l^*$ for $\mu\in(\mu_*-\varepsilon,\mu_*+\varepsilon)$. It follows that for all such $\mu$,
\[\dd\lim_{t\to\infty}(h_{\mu}(t)-g_{\mu}(t))\ge h_{\mu}(T)-g_{\mu}(T)>l^*\]
We further have that $(\mu_*-\varepsilon,\mu_*+\varepsilon)\cap\sum{_*}=\emptyset$, and $sup\sum{_*}\le \mu_*-\varepsilon$. This contradicts the definition of $\mu_*$, so $\mu_*\in \sum{_*}$. Thus $h_\infty-g_\infty\leq l^*$ when $\mu\leq\mu_*$ or $\mu=\mu^*$.\end{proof}

\end{document}